\documentclass[11pt,a4paper, envcountsame,mathserif]{amsart}
\usepackage[usenames,dvipsnames]{color}

\usepackage[utf8]{inputenc}

\usepackage[capitalise]{cleveref}		

 \usepackage{amsmath, amssymb, xspace}
 \usepackage{graphicx}

 \usepackage[all]{xy}

\usepackage{tikz}
\usetikzlibrary{arrows,snakes,positioning,backgrounds,shadows}
\usepackage{stmaryrd}

\newtheorem{theorem}{Theorem}[section]

\newtheorem{thm}[theorem]{Theorem}

\newtheorem{fact}[theorem]{Fact}

\newtheorem{example}[theorem]{Example}

\newtheorem{proposition}[theorem]{Proposition}

\newtheorem{prop}[theorem]{Proposition}

\newtheorem{claim}[theorem]{Claim}

\newtheorem{conjecture}[theorem]{Conjecture}

\newtheorem{lemma}[theorem]{Lemma}

\newtheorem{cor}[theorem]{Corollary}

\newtheorem{question}[theorem]{Question}

\theoremstyle{definition}
\newtheorem{definition}[theorem]{Definition}

\newtheorem{remark}[theorem]{Remark}

\newcommand{\NN}{{\mathbb{N}}}

\newcommand{\QQ}{{\mathbb{Q}}}
\newcommand{\ZZ}{{\mathbb{Z}}}
\newcommand{\sub}{\subseteq}
\newcommand{\sN}[1]{_{#1\in \NN}}
\newcommand{\uhr}[1]{\! \upharpoonright_{#1}}
\newcommand{\ML}{Martin-L{\"o}f}
\newcommand{\SI}[1]{\Sigma^0_{#1}}
\newcommand{\PI}[1]{\Pi^0_{#1}}

\newcommand{\bi}{\begin{itemize}}
\newcommand{\ei}{\end{itemize}}
\newcommand{\bc}{\begin{center}}
\newcommand{\ec}{\end{center}}

\newcommand{\Halt}{{\ES'}}
\newcommand{\ES}{\emptyset}

\newcommand{\tp}[1]{2^{#1}}
\newcommand{\ex}{\exists}
\newcommand{\fa}{\forall}
\newcommand{\lep}{\le^+}
\newcommand{\gep}{\ge^+}
\newcommand{\la}{\langle}
\newcommand{\ra}{\rangle}

\newcommand{\seqcantor}{2^{ \NN}}

\newcommand{\cantor}{\seqcantor}
\newcommand{\strcantor}{2^{ < \omega}}

\newcommand{\Opcl}[1]{[#1]^\prec}

\newcommand{\n}{\noindent}

\newcommand{\vsp}{\vspace{6pt}}
\newcommand{\leb}{\mathbf{\lambda}}

\newcommand{\sss}{\sigma}

\renewcommand{\S}{S_\infty}

\newcommand{\lland}{\, \land \, }

\newcommand \seq[1]{{\left\langle{#1}\right\rangle}}

\newcommand\+[1]{\mathcal{#1}}

\newcommand{\wt}{\widetilde}
\newcommand{\ol}{\overline}

\newcommand{\lra}{\leftrightarrow}
\newcommand{\LR}{\Leftrightarrow}
\newcommand{\RA}{\Rightarrow}
\newcommand{\LA}{\Leftarrow}

\newcommand{\rapf}{\n $\RA:$\ }
\newcommand{\lapf}{\n $\LA:$\ }

\newcommand{\sssl}{\ensuremath{|\sigma|}}

\DeclareMathOperator{\Aut}{Aut}





\numberwithin{equation}{section}
\renewcommand{\hat}{\widehat}
\begin{document}

\title{Logic Blog 2018}

 \author{Editor: Andr\'e Nies}

\maketitle


 {
The Logic   Blog is a shared platform for
\bi \item rapidly announcing  results and questions related to logic
\item putting up results and their proofs for further research
\item parking results for later use
\item getting feedback before submission to  a journal
\item foster collaboration.   \ei

 \vsp
\begin{tabbing}

 \href{http://arxiv.org/abs/1804.05331}{Logic Blog 2017} \ \ \ \   \= (Link: \texttt{http://arxiv.org/abs/1804.05331})  \\
 
 \href{http://arxiv.org/abs/1703.01573}{Logic Blog 2016} \ \ \ \   \= (Link: \texttt{http://arxiv.org/abs/1703.01573})  \\
 
  \href{http://arxiv.org/abs/1602.04432}{Logic Blog 2015} \ \ \ \   \= (Link: \texttt{http://arxiv.org/abs/1602.04432})  \\
  
  \href{http://arxiv.org/abs/1504.08163}{Logic Blog 2014} \ \ \ \   \= (Link: \texttt{http://arxiv.org/abs/1504.08163})  \\

   \href{http://arxiv.org/abs/1403.5719}{Logic Blog 2013} \ \ \ \   \= (Link: \texttt{http://arxiv.org/abs/1403.5719})  \\

    \href{http://arxiv.org/abs/1302.3686}{Logic Blog 2012}  \> (Link: \texttt{http://arxiv.org/abs/1302.3686})   \\

 \href{http://arxiv.org/abs/1403.5721}{Logic Blog 2011}   \> (Link: \texttt{http://arxiv.org/abs/1403.5721})   \\

 \href{http://dx.doi.org/2292/9821}{Logic Blog 2010}   \> (Link: \texttt{http://dx.doi.org/2292/9821})  
     \end{tabbing}

\vsp

\n {\bf How does the Logic Blog work?}

\vsp

\n {\bf Writing and editing.}  The source files are in a shared dropbox.
 Ask Andr\'e (\email{andre@cs.auckland.ac.nz})  in order    to gain access.

\vsp

\n {\bf Citing.}  Postings can be cited.  An example of a citation is:

\vsp

\n  H.\ Towsner, \emph{Computability of Ergodic Convergence}. In  Andr\'e Nies (editor),  Logic Blog, 2012, Part 1, Section 1, available at
\url{http://arxiv.org/abs/1302.3686}.}

%

%

 
The logic blog,  once it is on  arXiv,  produces citations on Google Scholar.
%
\newpage
\tableofcontents

\part{Computability theory}
   
      \section{Nies and Stephan: randomness and $K$-triviality for measures} 
 
 \subsection{A randomness notion for measures} We consider algorithmically defined randomness notions    for  finite  measures on Cantor space $\cantor$ (usually probability measures). We use the letters $\mu, \nu$ etc for finite   measures, with $\leb$ reserved to the uniform measure.  Letters $\sss, \tau$ denote binary strings, $Z, X, \ldots$ elements of $\cantor$, $[\sss] = \{ Z \colon \,  Z \succ \sss\}$. So $\leb [\sss] = \tp{-\sssl}$.

 This research interacts with a recent attempt to define ML-randomness   for quantum states corresponding to infinitely many qubits \cite{Nies.Scholz:18}. (Probability  measures correspond to  the   quantum states $\rho$ where the matrix  $\rho \uhr {M_n}$ is diagonal for each $n$.)
 Here is the main definition,  which was   discussed during a meeting on effective dynamical systems in Toulouse March 2018, but is implicit in the earlier preprint \cite{Nies.Scholz:18}.  We now have a paper    on this \cite{Nies.Stephan:19}.
 \begin{definition} A measure $\mu$ is called \emph{\ML\ absolutely continuous} (ML-a.c.\ for short) if $\inf_m \mu(G_m) = 0$ for each ML-test $\seq{G_m}$. \end{definition}

  It   suffices to only consider descending ML-tests, because we can replace $\seq{G_m}$ by the ML-test $\hat G_m = \bigcup_{k >m} G_k$, and of course $\inf_m \mu(\hat G_m) = 0$ implies $\inf_m \mu(G_m) = 0$. So we can   change the passing  condition  to $\lim_m G_m = 0$.
 
 Also,   just as for bit sequences, it suffices to only  consider    the usual universal ML-test $U_m = \bigcup_{e< m} G^e_{m+e+1}$. So \ML\ a.c.\ ness  is  a $\PI 3$ property of measures. 
 
 Since $\bigcap_m U_m$ is the set $\+ C$ of non-MLR bit sequences, we obtain
 
 \bc $\mu$ is \ML\ a.c.\  $\lra \ \mu(\+ C) = 0$. \ec
 It follows that we can actually restrict the definition  to  any descending universal ML-test, such as  $\seq {\+ R_b}\sN b$ in the notation of  \cite[Ch.\ 3]{Nies:book}.

 Recall that a \emph{Solovay test } is a sequence $\seq {S_n}$ of uniformly $\SI 1$ sets such that $\sum_k S_k < \infty$. A bit sequence $Z$ passes such a test if $\forall^\infty k \, Z \not \in S_k$. We say that a measure $\mu $ \emph{passes} such a test if $\lim_k \mu(S_k) =0$.   For $Z\in \cantor$, we let $\delta_Z$ denote the  probability  measure that is concentrated on  $\{Z\}$. 
 \begin{fact} \label{fact:random_examples} \bi  \item[(i)] The uniform measure  $\leb$ is \ML\ a.c.\  \item[(ii)] $\delta_Z$ is \ML\ a.c.\  iff $Z$ is ML-random. 
 
 \item[(iii)] Let $\mu = \sum c_k \delta_{Z_k}$, for a sequence $\seq{c_k}$ of reals in $[0,1]$ with $\sum_k c_k = 1$. 
 
  $\mu$ is \ML\ a.c.\  iff  all the sets $ {Z_k}$ are ML-random. 
  
 \ei  \end{fact}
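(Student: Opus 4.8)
The plan is to reduce all three parts to the reformulation established just above the statement: a finite measure $\mu$ is \ML\ a.c.\ if and only if $\mu(\+ C)=0$, where $\+ C=\bigcap_m U_m$ is the $G_\delta$ (hence Borel) set of non-ML-random sequences. Because $\+ C$ is Borel, every measure considered below assigns it a value and countable additivity applies to it; this is really the only point in the argument needing a word of justification.

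For (i), I would simply recall that the universal ML-test satisfies $\leb(U_m)\le 2^{-m}$, so $\leb(\+ C)\le\inf_m 2^{-m}=0$; equivalently, for an arbitrary ML-test $\langle G_m\rangle$ we have $\inf_m\leb(G_m)\le\inf_m 2^{-m}=0$. For (ii), the observation is that $\delta_Z(\+ C)\in\{0,1\}$, with value $1$ exactly when $Z\in\+ C$, i.e.\ exactly when $Z$ is not ML-random; hence $\delta_Z(\+ C)=0$ iff $Z$ is ML-random, which by the reformulation is the claim. (If one prefers, the same is visible directly: for open $G$ we have $\delta_Z(G)\in\{0,1\}$, equal to $1$ iff $Z\in G$, so $\inf_m\delta_Z(G_m)=0$ iff $Z$ escapes the test, and taking $\langle G_m\rangle$ universal pins this down.)

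For (iii), the substantive part, I would first note that $\mu=\sum_k c_k\delta_{Z_k}$ is a genuine probability measure (a countable sum of measures with total mass $\sum_k c_k=1$), and that we may assume $c_k>0$ for every $k$, since summands with $c_k=0$ contribute nothing to $\mu$ and may be discarded. By countable additivity, $\mu(\+ C)=\sum_k c_k\,\delta_{Z_k}(\+ C)$. As all summands are nonnegative, this vanishes iff each $\delta_{Z_k}(\+ C)$ vanishes (here using $c_k>0$), which by part (ii) happens iff every $Z_k$ is ML-random. Combined with $\mu$ \ML\ a.c.\ $\lra\mu(\+ C)=0$, this yields the equivalence.

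I do not anticipate a real obstacle: once the reformulation is in hand the whole statement is essentially a one-line computation with the set $\+ C$. The two places that warrant care are (a) the measurability of $\+ C$ — it is $G_\delta$, so all the manipulations with $\mu$, $\leb$ and $\delta_{Z_k}$ are legitimate — and (b) the coefficient bookkeeping in (iii), which is why I would explicitly normalize to $c_k>0$; without that normalization the stated equivalence is literally false, since a non-random $Z_k$ carrying weight $c_k=0$ does no harm.
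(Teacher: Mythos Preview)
Your proof is correct, and for (iii) it takes a genuinely different route from the paper. The paper argues directly with tests: for the forward direction, if some $Z_k$ fails an ML-test $\langle G_m\rangle$ then $\mu(G_m)\ge c_k$ for all $m$; for the converse, each ML-random $Z_k$ lies in only finitely many $G_m$ (viewing the test as a Solovay test), so for any $r$ there is $M$ with $Z_0,\ldots,Z_r\notin G_m$ for $m\ge M$, whence $\mu(G_m)\le\sum_{k>r}c_k$ and thus $\inf_m\mu(G_m)=0$. Your route via the reformulation $\mu$ \ML\ a.c.\ $\Leftrightarrow\mu(\+ C)=0$ together with countable additivity $\mu(\+ C)=\sum_k c_k\,\delta_{Z_k}(\+ C)$ is shorter and makes full use of the characterization the paper has just established; the paper's hands-on argument, by contrast, makes explicit the rate at which $\mu(G_m)\to 0$ in terms of the tail sums $\sum_{k>r}c_k$. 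Your remark that one must normalize to $c_k>0$ (else a non-random $Z_k$ with weight zero falsifies the literal equivalence) is a correct point the paper glosses over.
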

 \begin{proof} (i) and (ii) are immediate.

  \n  (iii)  $\RA$: If $Z_k \in \bigcap G_m$ for a ML-test $\seq{G_m}$  then $\mu(\bigcap G_m )  \ge \delta_k$, do $\mu$ is not \ML\ a.c.\ . 
   
   $\LA$:  given a ML-test $\seq{G_m}$, note that   the $Z_k$ pass this test as a Solovay test. Hence for each $r$, there is $M$ such that $Z_k \not \in G_m$ for each $k \le r$ and $m \ge M$. This implies that  $\mu(G_m) \le \sum_{k>r} c_k$. 
 \end{proof}
 
  The well known fact that ML-tests are equivalent to Solovay tests generalises to measures. 
 We use    the following  variant  for measures  of a result by Tejas Bhojraj  that he proved in the quantum setting.
 \begin{fact} A measure $\mu$  is \ML\ a.c.\  iff $\mu$ passes each Solovay test. \end{fact}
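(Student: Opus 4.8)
The plan is to handle the two directions separately. The implication ``$\mu$ passes every Solovay test $\RA$ $\mu$ is \ML\ a.c.'' is essentially immediate: any ML-test $\seq{G_m}$ satisfies $\sum_m \leb(G_m) \le \sum_m 2^{-m} < \infty$, so it is itself a Solovay test, whence $\lim_m \mu(G_m) = 0$ and in particular $\inf_m \mu(G_m) = 0$, which is exactly \ML\ a.c.\ ness. I would dispose of this in one line.

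For the converse, the tempting route is to turn a given Solovay test $\seq{S_n}$ into an ML-test of the form $\seq{\bigcup_{n \ge h(m)} S_n}_m$; but this needs a computable $h$ with $\sum_{n \ge h(m)} \leb(S_n) \le 2^{-m}$, and the convergence of $\sum_n \leb(S_n)$ need not be effective, so this is the step I expect to be the real obstacle. I would bypass it with a $\Pi^0_1$-class argument. Fix a Solovay test $\seq{S_n}$ and $\epsilon > 0$. Working with a descending universal ML-test $\seq{\+ R_b}\sN b$ as discussed above, \ML\ a.c.\ ness gives $\lim_b \mu(\+ R_b) = 0$, so I can choose $b$ with $\mu(\+ R_b) < \epsilon$ and set $K := \cantor \setminus \+ R_b$. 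Then $K$ is a $\Pi^0_1$ class with $\mu(K) > \mu(\cantor) - \epsilon$, and since $\+ C = \bigcap_b \+ R_b \sub \+ R_b$, every member of $K$ is ML-random.

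Now I would invoke the classical bit-sequence equivalence of ML-tests and Solovay tests: every $Z \in K$ passes $\seq{S_n}$, so there is $N$ with $Z \notin S_n$ for all $n \ge N$. Hence $K = \bigcup_N K_N$, where $K_N := K \setminus \bigcup_{n \ge N} S_n$ is an increasing sequence of $\Pi^0_1$ classes, and by continuity of $\mu$ from below there is an $N$ with $\mu(K_N) > \mu(\cantor) - 2\epsilon$. For every $n \ge N$ the sets $S_n$ and $K_N$ are disjoint, so $\mu(S_n) \le \mu(\cantor) - \mu(K_N) < 2\epsilon$; since $\epsilon$ was arbitrary, $\lim_n \mu(S_n) = 0$. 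The remaining points are cosmetic: replacing $1$ by $\mu(\cantor)$ to cover general finite measures, and citing the standard fact that a bit sequence is ML-random iff it passes every Solovay test. With the $\Pi^0_1$ class $K$ in hand the rest is routine measure theory, so beyond the effectivity issue just noted I anticipate no difficulty.
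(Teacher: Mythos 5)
Your proof is correct, but your second direction takes a genuinely different route from the paper's. The paper argues contrapositively at the level of tests: assuming $\mu(S_k)>\delta$ for infinitely many $k$, it builds a Martin-L\"of test that $\mu$ fails at level $\delta/2$ by the usual multiplicity-counting construction (the open set $G_{m,t}$ is generated by strings $\sigma$ with $[\sigma]\sub S_{k,t}$ for about $\delta\tp{m}$ many $k$, so $\leb G_{m,t}\le \tp{-m}/\delta$), together with a mass-transport estimate showing $\mu(G_{m,t})>\delta/2$ for suitable $t$. You instead exploit the equivalence, recorded just before the Fact, that \ML\ a.c.\ ness means $\mu(\+C)=0$ for the set $\+C$ of non-ML-random sequences, import the classical pointwise theorem that every ML-random bit sequence passes every Solovay test, and finish with soft measure theory; note that your sets $K_N$ need not be $\Pi^0_1$ classes at all --- continuity from below applies to any increasing sequence of Borel sets, so the effectivity worry you flag simply never arises on your route, and one does not even need the intermediate class $K$ (one could work directly with $\mu$ restricted to the ML-randoms). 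Your argument is shorter and avoids constructing any new test. What the paper's quantitative construction buys is independence from the pointwise theorem and, more importantly, from the existence of points: the Fact is presented as the measure analogue of a result of Bhojraj in the quantum setting, where there is no underlying space of bit sequences on which a pointwise argument could run, so the explicit test-to-test conversion is the version of the proof that survives that generalisation.
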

 \begin{proof} Each ML-test is a Solovay test. So the implication from  right to left is immediate. For the   implication from  left to right, suppose that $\seq {S_k} $ is a Solovay test and $\inf_k \mu(S_k) > \delta >0$. 
 We define a ML-test  that $\mu$ fails at level $\delta/2$. Let $S_{k,t}$  denotes the clopen set given by strings in $S_k$ of length $t$. By a minor modification of the standard proof (e.g.\ \cite[Prop.\ 3.2.19]{Nies:book}), let $G_{m,t} $ be the open set generated by strings $\sss$ such that       
  \bc $[\sss] \sub S_{k,t}$ for $\delta \tp{m-1}$ many $k$.  \ec  
  As in the standard proof one shows that $\leb G_{m,t} \le    \tp{-m}/\delta$. Then $G_m = \bigcup_t G_{m,t}$   (after thinning)   can be  turned it into a ML-test. 
 
 Given $m$ we pick  $t\in \NN$ sufficiently large so that for some  set  $M\sub \{0, \ldots, t-1\}$ of size $\tp m$  we have $\mu (S_{k,t}) > \delta$ for each $k \in M$.  We show that $\mu(G_{m,t}) > \delta/2$. Let $\sss$ range over strings of length $t$.  We have   \bc  $\sum_{[\sss] \not \subseteq G_{m,t}} \sum_{k \in M}\mu[\sss] \le 2^{m-1} \delta$ \ec  by definition of $G_m$. Since  $\tp m \delta \le \sum_{k \in M} \mu(S_{k,t})$, this implies     
 \bc $\sum_{[\sss]  \subseteq G_m} \sum_{k \in M}\mu[\sss] > 2^{m-1} \delta$. \ec   Since $|M| = 2^m$ this shows $\mu G_{m,t} > \delta/2$ as required.
 \end{proof} 
  
 For a measure $\nu$ and string $\sss$ with $\nu[\sss] >0$ let $\nu_\sss$ be the localisation: $\nu_\sss(A) = \tp{-\sssl} \nu (A \cap [\sss])$. Clearly if $\nu$ is ML-a.c.\  then so is $\nu_\sss$.

 A set $S$ of probability measures is  called \emph{convex} if $\mu_i \in S$ for $i \le k$ implies  that the convex combination $\mu = \sum_i \alpha_i \mu_i \in S$, where the $\alpha_i $ are reals in $ [0,1]$ summing up to $1$.   The extreme points of $S$ are the ones that can only be written as  convex combinations of length 1 of   elements of $S$. 
 \begin{prop} The \ML\ a.c.\  probability measures form a convex set. Its extreme points are the  Dirac measures.
 \end{prop}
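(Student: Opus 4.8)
\emph{Proof idea.} The plan is to dispose of convexity first, which is essentially free, and then prove the two halves of the characterisation of extreme points separately. For convexity, take \ML\ a.c.\ probability measures $\mu_0,\dots,\mu_k$, a convex combination $\mu=\sum_{i\le k}\alpha_i\mu_i$, and an arbitrary ML-test. By the reduction to descending tests noted just after the main definition, it suffices to check the passing condition for a descending ML-test $\seq{G_m}$; for such a test $\mu_i(G_m)\to 0$ for every $i$, so the finite sum $\mu(G_m)=\sum_i\alpha_i\mu_i(G_m)\to 0$ as well, and hence $\inf_m\mu(G_m)=0$.

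For one direction of the extreme-point claim, recall from Fact~\ref{fact:random_examples}(ii) that $\delta_Z$ lies in the set exactly when $Z$ is \ML-random. Whenever it does, $\delta_Z$ is in fact extreme already among \emph{all} probability measures: if $\delta_Z=\alpha\nu_0+(1-\alpha)\nu_1$ with $\alpha\in(0,1)$, then evaluating both sides on the Borel set $\cantor\setminus\{Z\}$ forces $\nu_0(\cantor\setminus\{Z\})=\nu_1(\cantor\setminus\{Z\})=0$, so $\nu_0=\nu_1=\delta_Z$. A fortiori $\delta_Z$ remains extreme inside the smaller convex set of \ML\ a.c.\ measures.

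For the converse I would argue contrapositively: a non-Dirac \ML\ a.c.\ probability measure $\mu$ is not extreme. First, since $\mu$ is not Dirac there is a string $\sss$ with $0<\mu[\sss]<1$ --- otherwise every cylinder has $\mu$-measure $0$ or $1$, the measure-$1$ cylinders $[Z\upto n]$ form a nested sequence, and by countable additivity $\mu(\{Z\})=\lim_n\mu[Z\upto n]=1$, making $\mu=\delta_Z$. Writing $[\neg\sss]=\cantor\setminus[\sss]$, a clopen set of measure $1-\mu[\sss]\in(0,1)$, let $\nu_0$ and $\nu_1$ be $\mu$ conditioned on $[\sss]$ and on $[\neg\sss]$, i.e.\ $\nu_0(A)=\mu(A\cap[\sss])/\mu[\sss]$ and $\nu_1(A)=\mu(A\cap[\neg\sss])/\mu[\neg\sss]$. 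Then $\mu=\mu[\sss]\cdot\nu_0+(1-\mu[\sss])\cdot\nu_1$ is a genuine convex combination, $\nu_0\neq\nu_1$ since $\nu_0[\sss]=1\neq 0=\nu_1[\sss]$, and both $\nu_0,\nu_1$ are \ML\ a.c.: this is a variant of the already noted fact that localisations of \ML\ a.c.\ measures are \ML\ a.c., and it follows at once from the domination $\nu_i(G_m)\le\mu(G_m)/\mu(B_i)$ with $B_0=[\sss]$, $B_1=[\neg\sss]$, whence $\inf_m\nu_i(G_m)=0$ for every ML-test $\seq{G_m}$.

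There is no real obstacle here. The only points that need a little care are choosing the splitting cylinder $[\sss]$ so that \emph{both} $\mu[\sss]$ and $\mu[\neg\sss]$ are strictly positive --- which is exactly what makes the two conditionals honest probability measures --- and the step ``$\mu$ not Dirac implies some $\sss$ with $0<\mu[\sss]<1$'', where countable additivity of $\mu$ is used.
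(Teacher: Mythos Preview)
Your proof is correct and takes essentially the same route as the paper: convexity via linearity on a descending test, and for the extreme-point characterisation, decompose a non-Dirac $\mu$ by conditioning on clopen pieces whose conditionals inherit \ML\ a.c.-ness. The only cosmetic difference is that the paper splits $\mu$ into its localisations $\mu_\sigma$ over \emph{all} strings of the least length $t$ at which this decomposition becomes nontrivial, rather than your binary split on $[\sigma]$ versus $[\neg\sigma]$; you also spell out more carefully than the paper why Dirac measures are extreme and why a non-Dirac $\mu$ must admit some $\sigma$ with $0<\mu[\sigma]<1$.
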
 
 \begin{proof} For convexity, suppose $\seq {G_m}$ is a descending ML-test. Then
 
 \bc  $\lim_m \mu_i(G_m) > 0$ for each $i$, \ec and hence $\lim_m \mu(G_m) >0$. 
 
 If $\mu$ is a Dirac measure then  it is an extreme point. Conversely,  if $\mu$ is not Dirac there is a least number $t$ such that the decomposition \bc  $\mu = \sum_{\sssl = t, \mu[\sss] >0} \mu [\sss] \cdot \mu_\sss$  \ec is nontrivial. 
 Hence $\mu$ is not an extreme point. 
 \end{proof}

 \subsection{Initial segment complexity of a measure $\mu$  as  a $\mu$-average}
 Let $K(\mu \uhr n)= \sum_{|x| =n}K(x) \mu[x] $ be the $\mu$-average of all the $K(x)$ over all  strings~$x$   of length $n$.    In a similar way  we define $C(\mu \uhr n)$. 
 
 \begin{fact} $C (\leb \uhr n) \gep n$, and therefore $K (\leb \uhr n) \gep n$ \end{fact}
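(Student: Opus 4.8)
The plan is to establish the bound for $C$ by a direct counting argument over strings of length $n$, and then to read off the bound for $K$ from the inequality $C(x) \lep K(x)$.

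Since $C(\leb \uhr n) = \tp{-n} \sum_{|x|=n} C(x)$, it suffices to show $\sum_{|x|=n} C(x) \ge n \tp n - O(\tp n)$. I would use the standard observation that for each threshold $j$ there are fewer than $\tp j$ strings $x$ with $C(x) < j$, simply because there are fewer than $\tp j$ programs of length less than $j$ and each outputs at most one string. Hence for $j \le n$ we have $\bigl| \dset{x}{|x| = n \land C(x) \ge j} \bigr| \ge \tp n - \tp j$. Writing the sum of the complexities as a sum over thresholds (the tail-sum formula for nonnegative integer quantities),
\[
\sum_{|x|=n} C(x) = \sum_{j \ge 1} \bigl| \dset{x}{|x| = n \land C(x) \ge j} \bigr| \ge \sum_{j=1}^{n} (\tp n - \tp j) = n \tp n - \tp{n+1} + 2 .
\]
Dividing by $\tp n$ gives $C(\leb \uhr n) \ge n - 2 + \tp{1-n}$, so in particular $C(\leb \uhr n) \gep n$.

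For the second assertion, recall that $C(x) \le K(x) + O(1)$, since a prefix-free machine is in particular a plain machine. Averaging this inequality against $\leb$ over strings of length $n$ yields $C(\leb \uhr n) \le K(\leb \uhr n) + O(1)$, and combining with the first part we get $K(\leb \uhr n) \gep C(\leb \uhr n) \gep n$.

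The one place where a little care is needed is the counting step: the crude single-threshold bound --- that at most a $\tp{-c}$ fraction of length-$n$ strings have complexity below $n - c$ --- only yields a lower bound of the form $(1 - \tp{-c})(n-c)$, which is linear in $n$ but with the wrong leading constant. Summing the contributions of all thresholds $j \le n$ (equivalently, bounding $\sum_{|x|=n} \max(0, n - C(x))$ by the geometric series $\sum_{k \ge 1} \tp{n-k+1} = \tp{n+1}$) is what upgrades this to $n$ minus merely an additive constant. Everything else is routine.
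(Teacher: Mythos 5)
Your proof is correct and follows essentially the same route as the paper: both express the average complexity via the tail-sum over thresholds $j$, bound the number of length-$n$ strings with $C(x)<j$ by $\tp j$, and sum the resulting geometric series to lose only an additive constant, then pass to $K$ via $C(x)\lep K(x)$. The only cosmetic difference is that the paper truncates the threshold sum using a constant $d$ with $C(x)\le |x|+d$, whereas you simply discard the thresholds above $n$; nothing substantive changes.
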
 
 \begin{proof} Suppose $d$ is chosen so that for each $x$ we have $C(x) \le |x| + d$ (we can in fact  ensure $d=1$ with the right universal machine, see \cite[Ch 2]{Nies:book}). 
  
  \begin{eqnarray*} C (\leb \uhr n) & = & \sum_{r=0}^{n+d} \sum_{x: |x| =n \land C(x) \ge r} \tp{-n} \\ 
    & \ge  & \sum_{r = 0} ^{ n } [  \sum_{|x| = n } \tp{-n}  - \sum_{|x| = n , C(x) < r)} \tp{-n}] \\
         &  \ge &  n+1 - \sum_{r \le n} \tp{-n+r}  \ge n-1. \end{eqnarray*} This does it. \end{proof}
 
We say that $\mu$ has \emph{complex  initial segments}  if $K(\mu \uhr n) \gep n$. The analog of Levin-Schnorr fails for measures in both directions.
\begin{example} There is a \ML\ a.c.\  measure  $\mu$ such that 

\n  $\sup  (n - K(\mu \uhr n))   = \infty$. 
\end{example}
\begin{proof} We let $\mu = \sum c_k \delta_{Z_k}$ where $Z_k$ is ML-random and $0^{n_k} \prec Z_k$ for a sequence $\seq {c_k}$ of reals in $[0,1]$ that add up to $1$, and a sufficiently fast growing sequence $n_k$. Such a $\mu$ is \ML\ a.c.\  by Fact~\ref{fact:random_examples}. 

For $n_k \le n < n_{k+1}$ we have 

$K(\mu \uhr n) \le (\sum_{l=0}^k c_l) \cdot  (n +2 \log n)  + \sum_{l=k+1}^\infty c_l \cdot 2 \log n) $

 \hfill $  \le (1-c_{k+1}) n + 2 \log n$. 

So if we ensure that $c_{k+1} \cdot n_k > k + 2 \log n_k$  we are good.   For instance, we can  let $c_k = 1/(k (k+1))$ and $n_k = \tp{k+4}$. 
\end{proof} 

We falsify the converse implication by the following. 
\begin{theorem} There are a random $X$ and a non-random $Y$ such that,
for all $n$, $K(X \upharpoonright n)+K(Y \upharpoonright n) \geq^+ 2n$. \end{theorem}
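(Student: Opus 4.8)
Were $X$ and $Y$ both ML-random, the claim would be immediate from the Levin--Schnorr theorem, $K(Z \upharpoonright n) \geq^+ n$ for ML-random $Z$. So the point is to keep the sum large while $Y$ is \emph{not} random, and the governing constraint is the ceiling $K(W \upharpoonright n) \leq^+ n + K(n)$, valid for every $W \in \cantor$ (describe $W \upharpoonright n$ by a shortest program for $n$ followed by the $n$ literal bits). Thus for \emph{any} $X$ we have $K(X \upharpoonright n) - n \leq^+ K(n)$, so at each $n$ the deficiency $n - K(Y \upharpoonright n)$ can be at most $K(n) + O(1)$; since $K(n)$ oscillates --- it is $\approx \log n$ generically but dips to $\approx \log\log n$ at powers of $2$, and lower still at iterated powers of $2$ --- this deficiency cannot be injected at any computable rate, and this is the crux of the difficulty. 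The plan is to make both inequalities tight. Take $X = \Omega$, which has near-maximal initial-segment complexity $K(\Omega \upharpoonright n) \geq^+ n + K(n)$ (see \cite{Nies:book}). Then it suffices to build a non-random $Y$ with $K(Y \upharpoonright n) \geq^+ n - K(n)$ for every $n$, since then at every $n$
\[ K(X \upharpoonright n) + K(Y \upharpoonright n) \ \geq^+ \ \bigl(n + K(n)\bigr) + \bigl(n - K(n)\bigr) \ = \ 2n . \]
(The ceiling also forces every admissible $Y$ to satisfy $K(Y\upharpoonright n) \geq^+ n - K(n)$, so the reduction loses nothing.)

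Two remarks frame the construction of $Y$. First, the sets $V_c = \{ Z : \exists n\ K(Z\upharpoonright n) < n - K(n) - c \}$ are uniformly $\SI 1$ with $\leb(V_c) \leq \tp{-c} \sum_n \tp{-K(n)} \leq \tp{-c}$, so $\seq{V_c} \sN c$ is an ML-test; hence ``$K(Y\upharpoonright n) \geq^+ n - K(n)$ for all $n$'' says exactly that $Y$ passes this one test. Every ML-random sequence passes it, so what we need is a \emph{non}-random sequence that also does --- equivalently, a witness that $\seq{V_c}$ is not a \emph{universal} ML-test. Second, by the ample-excess theorem of Miller and Yu, $Y$ fails to be ML-random as soon as $K(Y\upharpoonright n) \leq n + O(1)$ for infinitely many $n$, a very mild failure of randomness. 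So it is enough to place $Y$ inside the $\pic$ class $\cantor \setminus V_c$ for one suitable fixed $c$, while forcing $K(Y\upharpoonright m_k) \leq^+ m_k + O(\log k)$ along a sparse sequence of checkpoints $m_0 < m_1 < \cdots$ (the logarithmic slack is what keeps the ample-excess sum divergent).

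One builds $Y$ by a recursion-theorem argument, enumerating a Kraft--Chaitin set in tandem: as the bits of $Y$ are decided one keeps $Y$ inside $\cantor \setminus V_c$, and on passing a checkpoint $m_k$ one enumerates a request lowering $K(Y\upharpoonright m_k)$ to $m_k + O(\log k)$; the recursion theorem is needed because our own requests perturb the universal machine, hence perturb both $K(n)$ and the test $\seq{V_c}$. The main obstacle --- the delicate heart of the proof --- is that these two demands pull against each other: $\cantor \setminus V_c$ only shrinks as $V_c$ is enumerated, so keeping $Y$ in it may force revisions of the bits of $Y$ below a checkpoint, and every revision spends a fresh request; a careless implementation makes the total request weight diverge at precisely the rate at which it would have to converge for the ample-excess sum to be finite, which would leave $Y$ random. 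Making the book-keeping balance --- controlling the number of revisions at a checkpoint by the measure of $V_c$ revealed so far, and locating the $m_k$ at lengths where enough surplus of $\Omega$, hence enough permitted deficiency of $Y$, is guaranteed --- is where the genuine work lies. Granting this, one verifies $K(X\upharpoonright n) + K(Y\upharpoonright n) \geq^+ 2n$ both at $n = m_k$, where $Y$'s small complexity is offset by $\Omega$'s surplus $K(m_k)$, and at all other $n$, where $Y \in \cantor\setminus V_c$ already gives $K(Y\upharpoonright n) \geq^+ n - K(n)$.
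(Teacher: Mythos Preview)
Your reduction is correct and even sharp: since $K(X\upharpoonright n)\le^+ n+K(n)$ for \emph{every} $X$, any witness $(X,Y)$ to the theorem forces $K(Y\upharpoonright n)\ge^+ n-K(n)$, so the theorem is equivalent to the existence of a non-random $Y$ with this lower bound, and one may then take $X=\Omega$. But this reformulation is not progress: you have traded the theorem for an equally hard statement, and the entire difficulty now sits in building $Y$. Your sketch of that construction---stay inside the $\Pi^0_1$ class $\cantor\setminus V_c$ while issuing Kraft--Chaitin requests at checkpoints, resolving the interaction via the recursion theorem---is not carried out; you yourself write that ``making the book-keeping balance\ldots is where the genuine work lies'' and then grant it. That is the gap. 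It is not obvious that the tension you describe (revisions below a checkpoint cost fresh weight at exactly the rate that must converge) can be resolved, and nothing in the sketch indicates how.

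The paper sidesteps this entirely by \emph{not} taking $X=\Omega$. It picks a \emph{low} ML-random $X$; lowness makes the function $n\mapsto\max\{m:K(X\upharpoonright m)\le m+3n\}$ limit-computable, so one gets an increasing $f$ whose range is co-r.e.\ and $K(X\upharpoonright m)\ge m+3g(n)$ whenever $g(n)=\max\{m:f(m)\le n\}$. Then $Y$ is constructed explicitly, with no recursion theorem and no Kraft--Chaitin enumeration: take a Miller--Yu random $Z$ with $K(Z\upharpoonright n)\le n+g(n)/2$ infinitely often, and let $Y$ be $Z$ spread out along the range of $n\mapsto n+f(n)$, inserting zeros at the r.e.\ positions. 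Both the complexity lower bound $K(Y\upharpoonright n)\ge^+ n-2g(n)$ and the non-randomness of $Y$ drop out of one-line coding arguments using that the padding set is r.e. The surplus of $X$ is $3g(n)$ rather than $K(n)$, but that is enough; the point is that lowness of $X$ replaces your delicate balancing act with a concrete combinatorial construction.
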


\begin{proof}
Let $X$ be a low Martin-L\"of random set. There is strictly growing
function $f$ such that the complement of the image of $f$ is a recursively
enumerable set $E$ and $K(X \upharpoonright m) \geq m+3n$
for all $m \geq f(n)$. Note that this function exists, as $X$ is
low and Martin-L\"of random and so, for all $n$, the
maximal $m$ such that $K(X \upharpoonright m) \leq m+3n$ can be
found in the limit.

Now let $g(n) = \max\{m: f(m) \leq n\}$.
By  a result of Miller and Yu~\cite[Cor.\ 3.2]{Miller.Yu:11}, there is a Martin-L\"of
random $Z$ such that there exist infinitely many $n$ with
$K(Z \upharpoonright n) \leq n+g(n)/2$. For this set $Z$,
let  \bc $Y = \{n+f(n): n \in Z\}$. \ec Note that
$K(Z \upharpoonright n) \leq K(Y \upharpoonright n)+g(n)+K(g(n))$,
as one can enumerate the set $E$ until there are, up to $n$, only
$g(n)$ many places not enumerated and then one can reconstruct
$Z \upharpoonright n$ from $Y \upharpoonright n$ and
$g(n)$ and the last $g(n)$ bits of $Z$. As $Z$ is Martin-L\"of random,
$K(Z \upharpoonright n) \geq^+ n$ and so,
\bc $K(Y \upharpoonright n) \geq^+ n-g(n)-K(g(n)) \geq^+ n-2g(n)$. \ec
The definitions of $X,f,g$ give $K(X \upharpoonright n) \geq n+3g(n)$.
This shows that $K(X \upharpoonright n)+K(Y \upharpoonright n) \geq 2n$
for almost all $n$.

However, the set $Y$ is not Martin-L\"of random, as there are infinitely
many $n$ with $K(Z \upharpoonright n) \leq^+ n+g(n)/2$. Now
$Y \upharpoonright n+g(n)$ can be computed from $Z \upharpoonright n$
and $g(n)$, as one needs only to enumerate $E$ until the $g(n)$ nonelements
of $E$ below $n$ are found and they allow to see where the zeroes have to
be inserted into the string $Z \upharpoonright n$ in order to obtain
$Y \upharpoonright n+g(n)$. Note furthermore, that $K(g(n)) \leq g(n)/4$
for almost all $n$ and thus $K(Y \upharpoonright n+g(n)) \leq^+ n+3/4 \cdot
g(n)$ for infinitely many $n$, so $Y$ cannot be Martin-L\"of random.
\end{proof}

Note that the measure $\mu = (\delta_X+\delta_Y)/2$ has only two equal-weighted
atoms and furthermore satisfies that one of these atoms is not
Martin-L\"of random. So every component of a universal Martin-L\"of test
has at least $\mu$-measure $1/2$. On the other hand, $K(\mu \upharpoonright n)
\geq n$ for almost all $n$ by the preceding result. Thus one has the following
corollary.

\begin{cor} There is a measure $\mu$ with complex initial segments which
is not \ML\ a.c.\   \end{cor}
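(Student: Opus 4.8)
The plan is to take the measure $\mu = (\delta_X + \delta_Y)/2$, where $X$ is the ML-random set and $Y$ the non-random set supplied by the preceding Theorem. Two things have to be checked: that $\mu$ has complex initial segments, and that $\mu$ is not \ML\ a.c. Since the Theorem has already done all the real work, both checks are short.

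First I would observe that, because $\mu$ puts mass $1/2$ on each of $X$ and $Y$, the $\mu$-average defining $K(\mu \uhr n)$ splits as $K(\mu \uhr n) = \tfrac12 K(X \upharpoonright n) + \tfrac12 K(Y \upharpoonright n)$ (this identity remains valid in the degenerate case $X \upharpoonright n = Y \upharpoonright n$, since then both summands equal $K(X\upharpoonright n)$). The Theorem gives $K(X \upharpoonright n) + K(Y \upharpoonright n) \gep 2n$, hence $K(\mu \uhr n) \gep n$, which is exactly the statement that $\mu$ has complex initial segments.

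Next I would invoke the characterization established at the beginning of this section: $\mu$ is \ML\ a.c.\ iff $\mu(\+ C) = 0$, where $\+ C = \bigcap_m U_m$ is the class of non-MLR sequences. Since $Y$ is not ML-random, $Y \in \+ C$; and because $X$ is random we have $X \neq Y$, so $\mu\{Y\} = 1/2$ and therefore $\mu(\+ C) \ge 1/2 > 0$. Equivalently, every component $U_m$ of the universal ML-test satisfies $\mu(U_m) \ge 1/2$, so $\inf_m \mu(U_m) \ge 1/2$ and $\mu$ fails the universal ML-test. Hence $\mu$ is not \ML\ a.c., which together with the previous paragraph proves the corollary.

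There is no genuine obstacle at the level of the corollary itself: all the difficulty sits in the Theorem, namely in producing simultaneously a random $X$ and a non-random $Y$ with $K(X \upharpoonright n) + K(Y \upharpoonright n) \gep 2n$, and that construction (using a low ML-random $X$, the associated function $f$ with recursively enumerable complement of its image, and the Miller--Yu dip in complexity) has already been carried out above. So this really is a corollary: one simply packages $X$ and $Y$ into the two-atom measure $\mu$.
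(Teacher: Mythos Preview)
Your argument is correct and matches the paper's own proof: take $\mu=(\delta_X+\delta_Y)/2$, use the theorem to get $K(\mu\uhr n)=\tfrac12 K(X\uhr n)+\tfrac12 K(Y\uhr n)\gep n$, and observe that the non-random atom $Y$ forces $\mu(U_m)\ge 1/2$ for every component of the universal ML-test. Your extra care about the degenerate case $X\uhr n=Y\uhr n$ and about $X\neq Y$ is fine but not strictly needed.
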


\begin{prop} Suppose that $\mu $  is a  measure  such that  $K(\mu \uhr n)  \ge n+K(n)-r$ for infinitely many $n$. Then $\mu $ is \ML\ a.c.\  \end{prop}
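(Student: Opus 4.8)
The plan is to prove the contrapositive: if $\mu$ is not \ML\ a.c., then for every $r$ only finitely many $n$ satisfy $K(\mu\uhr n)\ge n+K(n)-r$; equivalently $K(\mu\uhr n)-n-K(n)\to-\infty$. So suppose $\mu$ is a probability measure that is not \ML\ a.c. By the equivalence recorded after the definition, $\delta:=\mu(\+ C)>0$, where $\+ C$ is the class of non-MLR sequences. Since $\+ C=\bigcap_m R_m$ for the standard Levin--Schnorr universal ML-test $R_m=\bigcup\{[\sigma]:K(\sigma)\le|\sigma|-m\}$ (see \cite{Nies:book}), we get $\mu(R_m)\ge\mu(\+ C)=\delta$ for every $m$; this is the only consequence of $\mu$ not being \ML\ a.c.\ that I use.

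Fix $m$, and let $\sigma_0,\sigma_1,\dots$ list the prefix-minimal strings $\sigma$ with $K(\sigma)\le|\sigma|-m$, so that $R_m=\bigsqcup_i[\sigma_i]$ is a disjoint union and hence $\sum_i\mu[\sigma_i]=\mu(R_m)\ge\delta$. Choose a level $N$ so large that the finitely many $\sigma_i$ of length at most $N$ already carry $\mu$-mass $\ge\delta/2$. Now invoke the hypothesis to pick some $n\ge N$ with $K(\mu\uhr n)\ge n+K(n)-r$, and set $B=\bigsqcup_{|\sigma_i|\le n}[\sigma_i]$, a clopen set with $\mu(B)\ge\delta/2$ every one of whose defining strings has length $\le n$.

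The heart of the matter is a uniform bound on $K(Z\uhr n)$ for $Z\in B$. If $Z\in[\sigma_i]$ with $|\sigma_i|\le n$, then $Z\uhr n=\sigma_i\tau$ where $\tau=Z(|\sigma_i|)\cdots Z(n-1)$ has length $n-|\sigma_i|$. A prefix machine that reads a shortest program for $\sigma_i$, then a shortest program for $n$, then exactly $n-|\sigma_i|$ further raw bits, and outputs $\sigma_i$ followed by those bits, gives
\[
K(Z\uhr n)\ \le\ K(\sigma_i)+K(n)+(n-|\sigma_i|)+O(1)\ \le\ n+K(n)-m+O(1),
\]
with an $O(1)$ depending only on the machine, not on $m$, $n$, or $Z$. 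This is the one delicate step, and where a naive attempt breaks down: pushing a complexity deficiency from a short prefix $\sigma_i$ up to the length $n$ supplied by the hypothesis would ordinarily cost an extra $O(\log n)$ to record the length of the tail $\tau$, which swamps the gain of $m$; but here we are allowed the summand $K(n)$, since it already appears in the bound we want to contradict, and once $n$ and $|\sigma_i|$ are known the length of $\tau$ is determined, so $\tau$ contributes only its $n-|\sigma_i|$ plain bits.

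To conclude, average against $\mu$. On $B$ one has $K(Z\uhr n)\le n+K(n)-m+O(1)$, and on the complement the standard bound $K(Z\uhr n)\le n+K(n)+O(1)$; since $\mu(B)\ge\delta/2$ and $\mu$ is a probability measure,
\[
K(\mu\uhr n)\ \le\ n+K(n)+O(1)-\tfrac{\delta}{2}\,m .
\]
Together with $K(\mu\uhr n)\ge n+K(n)-r$ this forces $\tfrac{\delta}{2}\,m\le r+O(1)$ with a constant independent of $m$, which is impossible for large $m$. Hence $\mu$ is \ML\ a.c.; the same computation in fact shows that $K(\mu\uhr n)-n-K(n)\to-\infty$ whenever $\mu$ is not \ML\ a.c.
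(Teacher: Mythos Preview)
Your proof is correct and follows essentially the same strategy as the paper's: assume $\mu$ is not \ML\ a.c., fix a level of a failing test, bound $K(Z\uhr n)$ by $n+K(n)-m+O(1)$ on a set of $\mu$-measure at least $\delta/2$, and average. The one notable difference is the choice of test: the paper works with an arbitrary ML-test $\seq{G_d}$ that $\mu$ fails and argues that $K(x\mid n,d)\le^+ n-d$ for $[x]\subseteq G_d$, incurring a $2\log d$ loss when passing to unconditional $K$; you instead pick the Levin--Schnorr test $R_m=\bigcup\{[\sigma]:K(\sigma)\le|\sigma|-m\}$, so the complexity deficit is already witnessed by a short prefix and no $\log$-term appears. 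This makes your key estimate slightly cleaner, but the content is the same. One minor presentational point: you announce the contrapositive but then ``invoke the hypothesis'' mid-proof, which is really a proof by contradiction; your closing remark that the computation in fact gives $K(\mu\uhr n)-n-K(n)\to-\infty$ shows you could have run the pure contrapositive throughout.
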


\begin{proof} Suppose that $\mu $ is not \ML\ a.c.\  So  there is a ML-test $\seq {G_d} \sN d$ and $\epsilon > 0$  such that  $\mu (G_d) > \epsilon$ for each $d$.   If $x$ is a string of length $n$ such that $[x] \sub G_d$ then 
\[ K(x \mid n,d ) \lep n - d.  \]
To see this let $M$ be the machine that on  a pair of auxiliary inputs $n,d  $  gives a description of length $n-d$ for each such $x$ (so the descriptions for different $x$ are prefix free). It follows that  for $x$ as above
\[ K(x)  \lep n + K(n) - d + 2 \log d.  \]
Now view $G_d$ as given by an enumeration of strings, and choose $n$ large enough so that $\mu G_d^{\le n} > \epsilon$, where $G_d^{\le n}$ denotes the open set given by the strings in this enumeration of length at most $n$. Let $c$ be a constant such that $K(x) \le n + K(n) + c$ for each $x$ of length $n$. We have 
\begin{eqnarray*}K(\mu \uhr n)   & = &  \sum_{|x| =n}K(x) \mu[x]  \\
      & = &   \sum_{|x| =n, [x] \sub G_d } K(x) \mu[x]   +  \sum_{|x| =n, [x] \not \sub G_d } K(x) \mu[x] \\
      & \le &    n+ K(n) + c - \epsilon d/2. \end{eqnarray*}
The last inequality holds because  \bc $  \sum_{|x| =n, [x] \sub G_d } K(x) \mu[x]   \le \epsilon (n+ K(n) + c - d  + 2\log d)$ \ec and 
\bc $  \sum_{|x| =n, [x] \not \sub G_d } K(x) \mu[x] \le (1- \epsilon) (n + K(n) + c)$. \ec 

Now given $r$ let $d = 2 r / \epsilon$. By the above, for large enough $n$ we have $K(\mu \uhr n)  <  n+K(n)-r$. So $\mu $ is not strongly Chaitin random.  \end{proof}

\begin{question}  In analogy to the case of bit strings, does strong Chaitin randomness of a measure imply \ML\ a.c.\ ness relative to $\Halt$? \end{question}

If the measure $\mu$ has an atom $A$ but is not Dirac then
function $C(\mu \uhr n)$ is not bounded from below by $n-c$
for any $c$. The reason is that when $c' = \mu(A)$ then
for this atom, the function $n \mapsto c' \cdot (n-C(A \upharpoonright n))$
is not bounded by any constant and therefore it can go arbitrarily
low; this would then make the average to be below $n-c$ for any given
$c$ at infinitely many $n$.
 
 A \emph{fan}  is a prefix-free set $V$ such that $\Opcl V = \cantor$. Note that $V$ is necessarily finite. The $\mu$-\emph{average length} of $V$ is \bc $\ell(\mu\uhr V ) = \sum_{\sss \in V} \sssl \mu[\sss]$. \ec Generalising the above, we let 
 
\bc  $K (\mu \uhr V)= \sum_{\sss \in V} K(\sss) \mu [\sss]$.  \ec

We say that $\mu $ has  complex initial segments  in the  strong sense  if

\n  $K (\mu\uhr V) \gep \ell(\mu\uhr V)$ for each fan $V$. To be done: does this imply \ML\ a.c.\ ?

  \subsection{Connection to ML-randomness of measures in $\+ M(\cantor)$}  A natural probability measure $\mathbb P$ on the  space $\+ M(\cantor)$  of probability measures on Cantor space has been introduced implicitly  in Mauldin and Monticino \cite{Mauldin.Monticino:95},  and in Quinn Culver's thesis \cite{Culver:15} in the context of computability, where he shows that this measure is computable.  Let $\+ R \sub [0,1]^{\strcantor}$ be the closed set of representations of probability measures; namely, $\+ R$ consists of those $X$ such that $X_\sss = X_{\sss0} + X_{\sss 1}$ for each string $\sss$. $P$  is the unique measure on $\+ R$ such that for each string $\sss$ and $r, s \in [0,1]$, we have 
\bc $P(X_{\sss0} \le r \mid X_\sss = s) = \min (1, r/s)$. \ec 
That is, we choose $X_{\sss 0}$  at random w.r.t. the uniformly distribution in the interval $[0, X_\sss]$), and the choices made at different strings are independent.

%
 
  \begin{prop} \label{fffuuukkk}Every probability measure $\mu $ that is ML-random wrt to $\mathbb P$  is \ML\ a.c.\ . \end{prop}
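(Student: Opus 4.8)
The plan is to show that a $\mathbb P$-random measure $\mu$ cannot concentrate positive mass on the null set $\+ C$ of non-ML-random sequences, since $\mu(\+ C)=0$ is exactly ML-a.c.-ness. The natural strategy is a Fubini-type argument: I would first establish that if $\mu(\+ C) > 0$ for a $\mathbb P$-positive set of $\mu$, we can extract a single ML-test (relative to nothing, since $\mathbb P$ is computable) that all these $\mu$ fail, contradicting ML-randomness of $\mu$ with respect to $\mathbb P$. The key point is that $\mathbb P$ is a computable measure on $\+ M(\cantor)$ (Culver), so an effective null subset of $\+ M(\cantor)$ corresponds to a genuine ML-test on representations, and a $\mathbb P$-random $\mu$ must avoid it.

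First I would quantify the failure: suppose toward a contradiction that $\mu$ is $\mathbb P$-random but not ML-a.c., so by the universality remarks there is $\epsilon > 0$ with $\mu(U_m) > \epsilon$ for all $m$, where $\seq{U_m}$ is the standard universal ML-test. The crucial observation is that for each fixed $m$, the map $\nu \mapsto \nu(U_m)$ on $\+ M(\cantor)$ is \emph{lower semicomputable} (since $U_m$ is $\SI 1$, uniformly), so the set $V_{m} = \{\nu : \nu(U_m) > \epsilon\}$ is uniformly $\SI 1$ in the representation space. Thus $\{\nu : \forall m\, \nu(U_m) > \epsilon\}$ is a $\PI 2$ set; to turn this into an ML-test on $\+ M(\cantor)$ I need the key estimate that $\mathbb P(V_m) \to 0$, or at least that $\mathbb P(\{\nu : \nu(U_m) > \epsilon\})$ is summably small along a subsequence. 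This is where I would invoke the structure of $\mathbb P$: since $\int \nu(U_m)\, d\mathbb P(\nu) = \leb(U_m) \le 2^{-m}$ (a Fubini computation using that the $\mathbb P$-average of $\nu$ is the uniform measure $\leb$ — this follows from the defining conditional-uniform construction, as $\mathbb{E}_{\mathbb P}[X_\sss] = 2^{-|\sss|}$), Markov's inequality gives $\mathbb P(V_m) \le 2^{-m}/\epsilon$, which is summable. Hence $\seq{V_m}$, after the usual rescaling, is a Solovay test on $(\+M(\cantor),\mathbb P)$, and by the Solovay--ML equivalence (relativized to the computable measure $\mathbb P$) it yields an ML-test with respect to $\mathbb P$ that $\mu$ fails — contradicting $\mathbb P$-randomness of $\mu$.

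The main obstacle I expect is the verification that $\mathbb{E}_{\mathbb P}[X_\sss] = 2^{-|\sss|}$, i.e.\ that the ``barycenter'' of $\mathbb P$ is the uniform measure. By the independent-choices description, $X_{\estring} = 1$ and at each level one splits the current mass $X_\sss$ uniformly into $[0, X_\sss]$, so $\mathbb{E}[X_{\sss 0} \mid X_\sss] = X_\sss / 2$, hence $\mathbb{E}[X_{\sss 0}] = \mathbb{E}[X_\sss]/2$; by induction on $|\sss|$ this gives $\mathbb{E}[X_\sss] = 2^{-|\sss|}$. Then the interchange $\int \nu(U_m)\,d\mathbb P = \int \sum_{[\sss] \text{ a component of } U_m} X_\sss \, d\mathbb P = \sum 2^{-|\sss|} = \leb(U_m)$ is justified by monotone convergence since everything is nonnegative, and the uniform lower semicomputability of $\nu \mapsto \nu(U_m)$ together with the computability of $\mathbb P$ makes the whole construction effective. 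One subtlety to handle carefully is that $\mu$ is presented via a representation $X \in \+ R$, and one should check that $\mu(U_m) > \epsilon$ is detectable from $X$ in a $\SI 1$ way uniformly; this is routine since $U_m$ is generated by a computable enumeration of strings and $\mu(U_m) = \sup_t \sum \{X_\sss : \sss \text{ among first } t \text{ strings, pairwise incomparable}\}$.
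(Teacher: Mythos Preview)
Your proposal is correct and follows essentially the same route as the paper: show that the barycenter of $\mathbb P$ is $\leb$ (so $\int \nu(G_m)\,d\mathbb P(\nu)=\leb(G_m)\le 2^{-m}$), apply Markov's inequality to bound $\mathbb P(\{\nu:\nu(G_m)>\epsilon\})$, and observe that these sets form a $\mathbb P$-ML-test (after shifting indices) that $\mu$ would fail. The only cosmetic difference is in how the barycenter identity $\mathbb E_{\mathbb P}[X_\sss]=2^{-|\sss|}$ is established: you argue directly by conditional expectation and induction on $|\sss|$, whereas the paper uses a symmetry argument (a $\mathbb P$-preserving transformation swapping strings of the same length) together with $\sum_{|\sss|=n}r_{[\sss]}=1$; both are short and equally valid.
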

  The proof is based on    two facts.   For  $G \sub \cantor $ be open, for the duration of this proof  let $\mu$ range over ${\+ M (\cantor)}$  and let  \bc $r_G= \int \mu(G) d \mathbb P(\mu)$. \ec
  \begin{fact}  $r_G = \leb (G)$. \end{fact}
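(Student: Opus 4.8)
The plan is to reduce the claim to the computation of the single $\mathbb P$-expectation $\int X_\sss \, d\mathbb P$ for basic cylinders, and then to evaluate this by induction on $\sssl$ using the conditional distributions that define $\mathbb P$.

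First, write the open set $G$ as a disjoint union $G = \bigsqcup_i [\sss_i]$, where $\{\sss_i\}_i$ is a finite or countably infinite antichain in $\strcantor$ (every open subset of $\cantor$ is generated by its set of minimal, hence pairwise incomparable, strings). Then for every probability measure $\mu$ with representation $X \in \+ R$ we have $\mu(G) = \sum_i \mu[\sss_i] = \sum_i X_{\sss_i}$. Since all summands are nonnegative, Tonelli's theorem gives
\[ r_G = \int \sum_i X_{\sss_i} \, d\mathbb P = \sum_i \int X_{\sss_i}\, d\mathbb P, \]
so it suffices to prove $\int X_\sss\, d\mathbb P = \tp{-\sssl}$ for each string $\sss$; this yields $r_G = \sum_i \tp{-\sssl} = \sum_i \leb[\sss_i] = \leb(G)$.

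Next, establish $\int X_\sss\, d\mathbb P = \tp{-\sssl}$ by induction on $\sssl$. For $\sss = \estring$ this is immediate because $\mathbb P$ is carried by representations of probability measures, so $X_{\estring} \equiv 1$. For the inductive step, recall that under $\mathbb P$, conditionally on the value of $X_\sss$ the coordinate $X_{\sss 0}$ is uniform on $[0, X_\sss]$, hence $\mathbb E(X_{\sss 0} \mid X_\sss) = X_\sss / 2$; taking expectations and applying the inductive hypothesis gives $\int X_{\sss 0}\, d\mathbb P = \tfrac12 \int X_\sss\, d\mathbb P = \tp{-\sssl-1}$. Since $X_{\sss 1} = X_\sss - X_{\sss 0}$ holds on $\+ R$, we also get $\int X_{\sss 1}\, d\mathbb P = \tp{-\sssl} - \tp{-\sssl-1} = \tp{-\sssl-1}$, completing the induction.

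I do not expect a genuine obstacle. The two minor points needing care are the measurability of $\mu \mapsto \mu(G)$, which holds since it is the increasing pointwise limit of the continuous functions given by finite subunions of the $[\sss_i]$, and the interchange of sum and integral, which is exactly monotone convergence. All the real content is in the one-line identity $\mathbb E(X_{\sss 0}\mid X_\sss) = X_\sss/2$, and this is forced directly by the definition of $\mathbb P$.
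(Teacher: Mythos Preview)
Your proof is correct, but it arrives at $r_{[\sss]} = \tp{-\sssl}$ by a different mechanism than the paper. You proceed by induction on $\sssl$, using the defining conditional law $\mathbb E(X_{\sss 0}\mid X_\sss) = X_\sss/2$ to halve the expectation at each step. The paper instead fixes a level $n$, notes $\sum_{\sssl=n} r_{[\sss]} = 1$, and then invokes a symmetry of $\mathbb P$: for any two strings $\sss,\eta$ of length $n$ there is a $\mathbb P$-preserving transformation of $\+ M(\cantor)$ exchanging the roles of $\sss$ and $\eta$, forcing $r_{[\sss]}=r_{[\eta]}$ and hence $r_{[\sss]}=\tp{-n}$. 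Your route is more self-contained, as it uses only the conditional distribution that literally defines $\mathbb P$; the paper's symmetry argument is slicker once one accepts that such a transformation exists and preserves $\mathbb P$, but that invariance is asserted rather than checked. The reduction to basic cylinders via a disjoint antichain and countable additivity is the same in both proofs.
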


\begin{proof} Clearly  for each $n$ we have \[\sum_{\sssl = n}r_{[\sss]} = \int \sum_{\sssl = n} \mu ([\sss]) d\mathbb P(\mu) =1.\] Further,  $r_\sss = r_\eta$ for $\sssl = |\eta|=n$ because there is a $\mathbb P$-preserving transformation $T$ of $\+ M(\cantor)$ such that $\mu([\sss]) = T(\mu)([\eta])$. Therefore $r_{[\sss]} = \tp{-\sssl}$. 

If $\sss, \eta $ are incompatible then $r_{[\sss] \cup [\eta]} = r_{[\sss]} + r_{[\eta]}$. Now it suffices to write $G = \bigcup_i [\sss_i]$ where the strings $\sss_i$ are incompatible, so that $\leb G = \sum_i \tp{-|\sss_i|}$. 
\end{proof}

\begin{fact} Let $\mu \in \+ M(\cantor)$ and let  $\seq{G_m}$ be a   ML-test  such that there is $\delta \in \QQ^+$ with $\fa m \,  \mu(G_m) > \delta$. Then $\mu $ is not ML-random w.r.t.\ $\mathbb P$. 
\end{fact}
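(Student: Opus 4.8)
The plan is to convert the hypothesis into a genuine ML-test on $\+ M(\cantor)$ with respect to $\mathbb{P}$, using the preceding two Facts together with Markov's inequality. First I would record that, since $\seq{G_m}$ is a ML-test, $\leb(G_m) \le \tp{-m}$, so by the preceding Fact
\[ r_{G_m} = \int \mu(G_m)\, d\mathbb{P}(\mu) = \leb(G_m) \le \tp{-m}. \]

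For each $m$ put $\+ V_m = \{\nu \in \+ M(\cantor) : \nu(G_m) > \delta\}$. I would check that $\seq{\+ V_m}$ is a uniformly effectively open (uniformly $\SI 1$) sequence of subsets of $\+ M(\cantor)$: writing $G_m = \Opcl{W_m}$ for a c.e.\ set of strings $W_m$ uniformly in $m$, the quantity $\nu(G_m)$ is the supremum of $\nu(\Opcl F)$ over finite $F \subseteq W_m$, and $\nu(\Opcl F)$ is computed exactly from finitely many coordinates of a representation $X \in \+ R$ of $\nu$ (refine the strings of $F$ to a common length and sum the corresponding coordinates $X_\tau$); hence $X \mapsto \nu_X(G_m)$ is lower semicomputable uniformly in $m$, so $\{X \in \+ R : \nu_X(G_m) > \delta\}$ is $\SI 1$ uniformly in $m$. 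Markov's inequality now gives $\delta \cdot \mathbb{P}(\+ V_m) \le \int \nu(G_m)\, d\mathbb{P}(\nu) = r_{G_m} \le \tp{-m}$, so $\mathbb{P}(\+ V_m) \le \tp{-m}/\delta$.

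Finally, pick $c \in \NN$ with $\tp{-c} \le \delta$; then $\seq{\+ V_{m+c}}\sN m$ is an ML-test with respect to $\mathbb{P}$ (which is a computable measure by \cite{Culver:15}), because $\mathbb{P}(\+ V_{m+c}) \le \tp{-m}$. By hypothesis $\mu \in \+ V_m$ for every $m$, hence $\mu \in \bigcap_m \+ V_{m+c}$, so $\mu$ fails this test and is therefore not ML-random w.r.t.\ $\mathbb{P}$. I expect the only genuinely delicate point to be the uniform lower semicomputability of $\nu \mapsto \nu(G_m)$ — that is, confirming that the sets $\+ V_m$ really constitute an effective test rather than merely a measure-theoretic one — while the conceptual heart, namely the Markov bound combined with the identity $r_{G_m} = \leb(G_m)$, together with the index shift, is essentially a one-line argument.
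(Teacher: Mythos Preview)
Your proposal is correct and follows essentially the same route as the paper: define $\+ V_m = \{\nu : \nu(G_m) > \delta\}$, use the preceding Fact together with Markov's inequality to bound $\mathbb P(\+ V_m) \le \tp{-m}/\delta$, and shift indices by some $c$ with $\tp{-c}\le\delta$ to obtain a $\mathbb P$-ML-test that captures $\mu$. The paper simply asserts that the $\+ V_m$ are uniformly $\SI 1$, whereas you spell out the lower-semicomputability argument; otherwise the two arguments are the same.
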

\begin{proof} Observe that by the foregoing fact   \[\delta \cdot  \mathbb P(\{ \mu \colon \mu(G_m) \ge  \delta\})  \le \int \mu(G_m) d \mathbb P(\mu)  = \leb(G_m) \le \tp{-m}.\] 
Let  $\mathcal G_m = \{ \mu \colon \mu(G_m) >  \delta\}$ which is uniformly $\SI 1$ in $\+ M(\cantor)$.  Fix $k$ such that $\tp{-k} \le \delta$; then $\seq {\mathcal G_{m+k}} \sN m$ is a ML-test w.r.t.\ $\mathbb P$ that succeeds on $\mu$. 
\end{proof}

 Culver shows that each ML-random $X$ for $P$ is non-atomic. So by Fact~\ref{fact:random_examples} the converse of Prop.\ \ref{fffuuukkk} fails: not every \ML\ a.c.\  $X$ is ML-random with respect to $P$.

 \subsection{SMB theorem} We recall some notation from the 2017 Logic Blog, Section 6.2, adapting some letter uses.  
    $\mathbb A^\infty$ denotes the space of one-sided infinite sequences of symbols in $A$. We can assume that this is the sample space, so that $X_n(\omega) = \omega(n)$.  By $\mu $ we denote their joint distribution.  A dynamics on $\mathbb A^\infty$  is given by the shift operator $T$, which erases the first symbol of   a sequence.
  A measure $\mu$ on $A^\infty$ is \emph{$T$-invariant} if $\mu G = \mu T^{-1}(G)$ for each measurable $G$. 
 
  We consider the r.v.  \[ h^\mu_n(Z ) = -\frac 1 n \log \mu [Z\uhr n],\]  (recall that  $\log$ is w.r.t.\ base $2$). 
 
Recall that $\mu$ is \emph{ergodic} if   every  $\mu$ integrable function $f $ with $f \circ T = f$  is constant $\mu$-a.s.  
An equivalent  condition that is easier to check is the following: for  $u,v \in \mathbb A^*$,  
\[ \lim_N \frac 1 N \sum_{k=0}^{n-1} \mu ([u] \cap T^{-k}[v]) = \mu [u] \mu [v]. \]

For ergodic $\mu$, the entropy $H(\mu)$ is defined as $\lim_n H_n(\mu)$, where \[H_n(\mu) =  -\frac 1 n \sum_{|w| = n}  \mu [w] \log \mu [w].\]
One notes that $H_{n+1}(\mu) \le H_n(\mu)\le 1$ so that the limit exists. Also note that $H_n(\mu) = \mathbb E h^\mu_n$.

The following says that in the ergodic case, $\mu$-a.s.\ the empirical  entropy equals the entropy of the measure.

\begin{thm}[SMB   theorem] Let  $\mu $ be an  ergodic    invariant measure for the shift operator $T$ on the space $\mathbb A^\infty$. 
Then for $\mu$-a.e.\  $Z$   we have  $\lim_n h^\mu_n(Z) = H(\mu)$.
\end{thm}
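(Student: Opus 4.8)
The plan is to follow the classical ergodic-theoretic argument, organized around the Birkhoff ergodic theorem applied to conditional information functions. First I would introduce, for each $k$, the conditional information $g_k(Z) = -\log \mu([Z\uhr k] \mid [Z\uhr(k-1)] \text{ shifted})$; more precisely, writing $I_k(Z) = -\log \mu[Z\uhr k] + \log \mu[Z\uhr(k-1)]$ for the increment, one has the telescoping identity $h^\mu_n(Z) = \frac1n \sum_{k=1}^{n} I_k(Z)$. The key observation is that $I_k = I_1 \circ T^{k-1}$ is \emph{not} quite true, but $I_k(Z)$ can be compared to $f_k(Z) := -\log \mu([Z\uhr 1] \mid \text{past of length } k-1)$ composed with the shift; the standard move is to write each term as $f_{k}\circ T^{k-1}$ plus an error, where $f_k \to f_\infty$ in $L^1$ by the martingale convergence theorem (the conditional expectations $E(\,\cdot \mid \mathcal F_{-k}^{-1})$ converge). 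This reduces matters to a maximal-ergodic-type estimate.

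The main steps, in order, are: (1) Set up the filtration of ``past'' $\sigma$-algebras and define $f_k$ and its $L^1$ (indeed a.e.) limit $f_\infty = -\log \mu([Z\uhr 1]\mid \mathcal F_{\text{past}})$; verify $f_k, f_\infty \in L^1(\mu)$ using that $-\log$ of a conditional probability over a finite alphabet has bounded expectation (entropy of a finite partition is finite). (2) Invoke Birkhoff's pointwise ergodic theorem for $f_\infty$: since $\mu$ is ergodic and $T$-invariant, $\frac1n\sum_{k=0}^{n-1} f_\infty\circ T^k \to E(f_\infty) = H(\mu)$ $\mu$-a.e. (3) Control the difference $\frac1n\sum_{k=1}^n I_k(Z) - \frac1n\sum_{k=0}^{n-1} f_\infty(T^k Z)$ by splitting off $\frac1n\sum (f_k - f_\infty)\circ T^{n-k}$; bound this using Maker's / the dominated ergodic theorem together with the fact that $\sup_k f_k \in L^1$ (a consequence of the entropy bound and a Doob-type maximal inequality, sometimes called the ``Chung lemma''). (4) Identify the limit constant: $E(f_\infty) = H(\mu)$ because $H_n(\mu) = \frac1n\sum_{k=1}^n E(I_k) \to E(f_\infty)$ by the same $L^1$-convergence, and $H(\mu) = \lim_n H_n(\mu)$ by definition.

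The hard part will be step (3): showing that the averages of the tails $(f_k - f_\infty)\circ T^{n-k}$ vanish in the limit, since here the index of the function being shifted \emph{grows with $n$}, so this is not a direct application of Birkhoff. The standard resolution is Maker's generalized ergodic theorem, or equivalently the estimate $\limsup_n \frac1n\sum_{k=1}^n |f_k - f_\infty|\circ T^{n-k} \le E(\sup_{m\ge K}|f_m - f_\infty|)$ for each fixed $K$ (via Birkhoff applied to the dominating function $\sup_{m\ge K}|f_m-f_\infty|$, which is in $L^1$), followed by letting $K\to\infty$ and using $f_k \to f_\infty$ a.e.\ with the integrable bound. The integrability of $\sup_k f_k$ is precisely where the finite-alphabet hypothesis enters, through the elementary inequality $\mu\{\sup_k f_k > \lambda\} \le \sum_{a\in A} \mu\{ -\log \mu([a]\mid\mathcal F_{\text{past}}) > \lambda \text{ along the way}\} \le |A| 2^{-\lambda}$-type bound, giving $E(\sup_k f_k) < \infty$. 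Once step (3) is in hand, the remaining identifications in step (4) are routine $L^1$ bookkeeping.
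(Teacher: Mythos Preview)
Your plan is essentially Breiman's classical argument and is correct in outline: telescoping into conditional information increments, martingale convergence of the $f_k$ to $f_\infty$, Birkhoff applied to $f_\infty$, and Maker's theorem (with the Chung--Neveu maximal inequality giving $\sup_k f_k\in L^1$) to kill the cross term. One small point you gloss over: on the one-sided space $\mathbb A^\infty$ there is no ``past'' $\sigma$-algebra to condition on, so to make the identities $I_k = f_k\circ T^{k-1}$ literal you should pass to the natural extension (the two-sided shift) and then transfer the a.e.\ statement back; this is routine but worth saying.

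As for comparison with the paper: there is nothing to compare. The paper does not prove the SMB theorem at all --- it is quoted as a classical background result, and the surrounding discussion immediately moves on to effective refinements (the Hochman and Hoyrup results for $\mu$-ML-random points) and to a conjecture for measures. So your proposal supplies a proof where the paper deliberately gives none; it is the standard one and would be accepted in any textbook treatment.
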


If $\mu$ is computable, then the conclusion holds for $\mu$-ML-random $Z$ by results of Hochman (implicit) \cite{Hochman:09} and Hoyrup \cite{Hoyrup:12}. Recent work of A.\ Day extends this to   spaces  other than $\mathbb A^\infty$ and amenable group actions. Here we keep the space but change the type of object. We say that a measure $\rho$ is $\mu$-\ML\ a.c.\  if $\rho(G_m) \to 0$ for each $\mu$-ML test $\seq{G_m}$.  Here is a special case of Conjecture 6.3 in 2017 Logic Blog where the states $\rho, \mu$ when restricted to   the matrix algebra $M_n$ are diagonal. 
Enough patience will suffice. 
\begin{conjecture}[Effective SMB   theorem for measures] Let  $\mu $ be a computable   ergodic      invariant measure for the shift operator $T$ on the space $\mathbb  A^\infty$. Suppose the measure $\rho$ is $\mu$-\ML\ a.c.\ .
Then  $\lim_n E_\rho h^\mu_n = H(\mu)$. 
\end{conjecture}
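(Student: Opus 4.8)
The plan is to adapt the classical proof of the Shannon--McMillan--Breiman theorem, as effectivized by Hochman and Hoyrup, to the setting where the ``test object'' is a measure $\rho$ that is only $\mu$-ML-a.c.\ rather than a single $\mu$-ML-random point. The starting observation is that $\lim_n h^\mu_n(Z) = H(\mu)$ holds on a $\SI 2^{0,\mu}$-describable set of $\mu$-measure one — indeed, by the effective SMB theorem of Hochman--Hoyrup, it holds on the set of $\mu$-ML-random $Z$, so the exceptional set $\mathcal{E} = \{Z : \lim_n h^\mu_n(Z) \neq H(\mu)\}$ is contained in $\bigcap_m G_m$ for some $\mu$-ML test $\seq{G_m}$. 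Since $\rho$ is $\mu$-ML-a.c., this gives $\rho(\mathcal{E}) = 0$; equivalently, $h^\mu_n(Z) \to H(\mu)$ for $\rho$-a.e.\ $Z$. So the only issue is upgrading this \emph{almost everywhere} convergence to convergence of the \emph{expectations} $E_\rho h^\mu_n$, which is exactly where uniform integrability (a dominating function) is needed.

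First I would recall the classical maximal-function device: set $f^*(Z) = \sup_n h^\mu_n(Z)$ and use the fact, going back to Chung's proof of SMB, that $f^* \in L^1(\mu)$ — more precisely, the tail bound $\mu\{Z : \sup_n h^\mu_n(Z) > t\}$ decays like $2^{-cnt}$-type estimates summing to something integrable, so that $E_\mu f^* < \infty$. The subtlety is that I need $f^*$ to be $\rho$-integrable, not just $\mu$-integrable, and $\rho$ need not be absolutely continuous with respect to $\mu$ in the classical sense. However, $\mu$-ML-a.c.\ is precisely the effective surrogate for absolute continuity: the set $\{f^* = \infty\}$ is $\PI 2^{0,\mu}$ of $\mu$-measure zero, hence (again by passing to a $\mu$-ML test) has $\rho$-measure zero, and more quantitatively the level sets $\{f^* > t\}$ should be approximable by $\mu$-ML test components in a way that forces $\rho\{f^* > t\}$ to inherit a summable tail bound, giving $E_\rho f^* < \infty$. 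With a $\rho$-integrable dominating function in hand, dominated convergence applied to $h^\mu_n \to H(\mu)$ $\rho$-a.e.\ yields $E_\rho h^\mu_n \to H(\mu)$.

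The main obstacle, as I see it, is making the second step — the effective domination — genuinely work: one must show that $\mu$-ML-a.c.-ness of $\rho$ transfers not just the null set $\{f^* = \infty\}$ but a \emph{uniform} integrability estimate. The cleanest route is probably to build, from the classical tail bound on $f^*$, an \emph{integral test} (a $\mu$-lower-semicomputable function $t$ with $\int t \, d\mu < \infty$) such that $f^* \lep t + O(1)$ pointwise on $\mu$-ML-random sequences, and then argue that any such integral test is automatically $\rho$-integrable whenever $\rho$ is $\mu$-ML-a.c. — this last implication being the measure-analogue of the familiar fact that a ML-random point is covered by no integral test, pushed from points to measures via the ``$\mu(\mathcal C)=0$''-style characterization in the spirit of the earlier parts of this section. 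If that integral-test formulation of $\mu$-ML-a.c.-ness can be established (it is essentially the Solovay-test reformulation, iterated), the rest is routine. A secondary point to check is that $h^\mu_n$ itself is bounded below (it is $\geq 0$ since $\mu[Z\uhr n] \leq 1$), so only the upper tail of $f^*$ matters and Fatou already gives $\liminf E_\rho h^\mu_n \geq H(\mu)$ for free, leaving only the $\limsup \leq H(\mu)$ direction to the domination argument.
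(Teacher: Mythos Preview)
The statement you attempt to prove is labelled a \emph{conjecture} in the paper; no proof is offered there (the only comment preceding it is ``Enough patience will suffice''). So there is no paper proof to compare your argument against.

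That said, your outline is the natural one and the first half is sound: Hochman--Hoyrup gives $h^\mu_n(Z)\to H(\mu)$ for every $\mu$-ML-random $Z$, so the exceptional set is contained in the intersection of a $\mu$-ML test and hence has $\rho$-measure zero; since $h^\mu_n\ge 0$, Fatou then yields $\liminf_n E_\rho h^\mu_n \ge H(\mu)$.

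The genuine gap is exactly the domination step you flag. Your proposed route is to show that any $\mu$-integral test $t$ (lower semicomputable with $\int t\,d\mu<\infty$) satisfies $\int t\,d\rho<\infty$ whenever $\rho$ is $\mu$-ML-a.c. This is false in general. Take $\mu=\lambda$ and let $t$ be an integral test that is unbounded on the ML-randoms---for instance $t(Z)=\sum_m [Z\in U_m]$ for a nested universal ML-test $(U_m)$: each $U_k$ has positive measure and hence contains ML-randoms $Z$ with $t(Z)>k$. Pick ML-random $Z_k$ with $t(Z_k)\ge 4^k$ and set $\rho=\sum_{k\ge 1} 2^{-k}\delta_{Z_k}$. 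The paper itself verifies that such a $\rho$ is ML-a.c.\ (all atoms are ML-random), yet $\int t\,d\rho\ge\sum_k 2^{-k}\cdot 4^k=\infty$. The correct measure-level analogue of ``$t(Z)<\infty$ on ML-randoms'' is only $\rho\{t=\infty\}=0$, which is much weaker than $t\in L^1(\rho)$ and does not supply a dominating function.

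So the $\limsup$ direction does not follow from your argument as it stands. Closing this gap---presumably by exploiting the specific structure of the functions $h^\mu_n$ rather than a generic integral-test bound---is precisely the open content of the conjecture.
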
 
 
 \subsection{$K$-triviality for measures}

 \begin{definition} A measure $\mu$ is called $K$-trivial if $K(\mu \uhr n) \lep K(n)$. \end{definition}
 For Dirac measures $\delta_A$ this is the  same as saying that $A$ is $K$-trivial in the usual sense. 
 
 \begin{prop} Suppose $\mu$ is $K$-trivial. Then $\mu$ has atoms. In fact, $\mu$ is concentrated on its atoms. \end{prop}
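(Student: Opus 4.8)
The plan is to show that the set $D$ of atoms of $\mu$ carries all the $\mu$-mass; since $\mu$ is a probability measure this forces $D\neq\emptyset$, so in particular $\mu$ has atoms. The driving observation is that $K$-triviality says the $\mu$-average $K(\mu\uhr n)=\sum_{|x|=n}K(x)\mu[x]$ lies within $O(1)$ of $K(n)$, whereas $K(x)\gep K(|x|)$ always (one reads $|x|$ off $x$); so $K(n)$ is essentially the least possible value of $K(x)$ on strings $x$ of length $n$, and the average being that small means the $\mu$-mass concentrates on strings of near-minimal complexity.

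First I would apply Markov's inequality to the nonnegative quantity $K(x)-K(n)+O(1)$ on strings of length $n$: from $K(\mu\uhr n)\le K(n)+O(1)$ it follows that for each $\epsilon>0$ there is a constant $m_\epsilon$ with $\mu\{x:|x|=n,\ K(x)\le K(n)+m_\epsilon\}\ge 1-\epsilon$ for \emph{every} $n$. The essential step is then to bound the \emph{number} of such strings independently of $n$. For this I would invoke the nontrivial direction of symmetry of information for prefix-free complexity (\cite[Ch.\ 3]{Nies:book}): since $n=|x|$ is computable from $x$ we have $K(x)=K(x,n)+O(1)\gep K(n)+K(x\mid\la n,K(n)\ra)$, so $K(x)\le K(n)+m$ forces $K(x\mid\la n,K(n)\ra)\lep m$, and relative to any fixed oracle there are only $O(2^m)$ strings of conditional complexity $\le m+O(1)$. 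Hence for each $\epsilon$ there is a constant $N_\epsilon$ such that for every $n$, $\mu$ puts $\ge 1-\epsilon$ of its mass on a set $S_n$ of at most $N_\epsilon$ strings of length $n$.

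Next I would pass to uniformly heavy cylinders. Fix a small $\epsilon$ and set $\delta=\epsilon/N_\epsilon$; deleting from $S_n$ the (at most $N_\epsilon$) strings $x$ with $\mu[x]<\delta$ costs less than $\epsilon$ in $\mu$-measure, so $H_n:=\{x:|x|=n,\ \mu[x]\ge\delta\}$ satisfies $\mu\big(\bigcup_{x\in H_n}[x]\big)\ge 1-2\epsilon$ for every $n$. Now I would use that $n\mapsto\mu[Z\uhr n]$ is nonincreasing with limit $\mu(\{Z\})$: a point $Z\in\cantor$ has $Z\uhr n\in H_n$ for infinitely many $n$ iff $\mu[Z\uhr n]\ge\delta$ for all $n$ iff $\mu(\{Z\})\ge\delta$. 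Thus $\limsup_n\bigcup_{x\in H_n}[x]$ is exactly the set of atoms of $\mu$-mass $\ge\delta$, and applying continuity from above to the decreasing sets $\bigcup_{k\ge n}\bigcup_{x\in H_k}[x]$ (each of measure $\ge 1-2\epsilon$) shows this set has $\mu$-measure $\ge 1-2\epsilon$. Hence $\mu(D)\ge 1-2\epsilon$; letting $\epsilon\to 0$ gives $\mu(D)=1$, so $\mu$ is concentrated on its (necessarily countably many) atoms, and a fortiori $D\neq\emptyset$.

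The one genuinely substantial point is the uniform counting bound in the second paragraph. Merely knowing that $\mu$ concentrates on strings $x$ of length $n$ with $K(x)\le K(n)+O(1)$, a naive count allows about $2^{K(n)}=\mathrm{poly}(n)$ such strings, which is entirely compatible with an atomless $\mu$ (think of a measure spread over a thin perfect set); what makes the argument work is that complexity near-minimal \emph{relative to the length} forces a number of strings bounded uniformly in $n$, and this is exactly what symmetry of information provides. The remaining ingredients --- Markov's inequality, the Kraft inequality, and elementary measure theory (continuity from above) --- are routine and I would not dwell on them.
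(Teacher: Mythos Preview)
Your proof is correct, and it rests on the same key counting lemma as the paper: for each constant $c$ there are at most $O(2^c)$ strings $x$ of length $n$ with $K(x)\le K(n)+c$, uniformly in $n$. The paper simply states this fact; you derive it via symmetry of information, which is one of the standard routes.

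The organisation differs. The paper argues by contradiction: assuming $\mu$ is non-atomic, for any $c$ one can choose $n$ so large that every cylinder $[x]$ with $|x|=n$ has $\mu[x]\le 1/(2d)$ where $d=O(2^c)$; then the low-complexity strings carry at most half the mass, all strings satisfy $K(x)\ge K(n)-b$, and the average $K(\mu\uhr n)$ is forced above $K(n)+c/2-O(1)$, contradicting $K$-triviality. As written, the paper's proof only explicitly treats the first claim (existence of an atom) and leaves ``$\mu$ is concentrated on its atoms'' implicit. Your direct argument---Markov's inequality to trap $1-\epsilon$ of the mass on $O_\epsilon(1)$ many strings, pigeonhole to produce uniformly $\delta$-heavy cylinders $H_n$, then the $\limsup$/continuity-from-above step to identify $\bigcap_n\bigcup_{k\ge n}H_k$ with the set of atoms of mass $\ge\delta$---delivers $\mu(D)=1$ explicitly. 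The two routes are close cousins: yours is a bit longer but self-contained and proves the full statement without having to split off the non-atomic part and rerun the contradiction.
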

 \begin{proof}  For each $c$ there is $d$ (in fact $d = O(2^c)$) such that for each $n$ there are at most $d$ strings $x$ of length $n$ with $K(x) \le K(n) + c$. Since $\mu$ is non-atomic, there is $n$ so that for each $x$ of length $n$ we have $\mu[x] \le 1/2d$. Note that there is a constant $b$ such that  $K(x) \ge K(|x|)-b $ for each $x$. Then  we have  that in the $\mu$- average
  $K(\mu \uhr n) $, the $x$ of length $n$ such that $K(x) \le K(n) + c$ have total measure  at most $1/2$,  and each $K(x) \ge K(n) -b$. So the $\mu$ average  is at least 
 $K(n) + c/2 $ up to a constant.  
 \end{proof} 
 
 \begin{prop} For each order function $f$ there is a non-atomic measure $\mu $ such that $K(\mu \uhr n) \lep K(n) + f(K(n))$. 
 
 In fact, for each nondecreasing unbounded  function $f$ which is approximable from
above there is a non-atomic measure $\mu$ such that
$K(\mu \upharpoonright n) \leq^+ K(n)+f(n)$.
\end{prop}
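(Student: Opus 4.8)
\emph{Plan.} The two assertions are proved together: I will establish the ``in fact'' statement and deduce the first from it. For the deduction, given an order function $f$ put $\hat g(n):=\min\{f(K(m)):m\ge n\}$. By the Kraft inequality $\sum_n 2^{-K(n)}\le 1$, for each $c$ only finitely many $n$ have $K(n)\le c$, so $K(n)\to\infty$ in the $\liminf$ sense; as $f$ is nondecreasing and unbounded, $f(K(m))\to\infty$ too, so the minimum is attained and $\hat g$ is nondecreasing, unbounded, and satisfies $\hat g(n)\le f(K(n))$. It is approximable from above, since $\hat g(n)\le k$ iff $\exists m\ge n\,(K(m)\le j_k)$, where $j_k:=\max\{j:f(j)\le k\}$ is computable (as $f$ is computable, nondecreasing and unbounded) and the displayed condition is $\Sigma^0_1$. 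Applying the ``in fact'' statement to $\hat g$ yields a non-atomic $\mu$ with $K(\mu\uhr n)\le^+ K(n)+\hat g(n)\le^+ K(n)+f(K(n))$. Note it is essential that this statement allows $f$ merely approximable from above, since $\hat g$ need not be computable.

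So assume $f$ is nondecreasing, unbounded and approximable from above, and fix a computable approximation $f_s(n)$, nonincreasing in $s$ with limit $f(n)$; replacing $f_s(n)$ by $\max_{m\le n}f_s(m)$ we may assume it is also nondecreasing in $n$. Put $m_k:=\min\{n:f(n)\ge k\}$ (finite, and $m_k\to\infty$, since $f$ is unbounded). I will take $\mu$ to be the uniform measure on $\{Z\in\cantor:Z(m)=0\text{ for all }m\in E\}$ for a co-infinite c.e.\ set $E$ to be built; equivalently $\mu$ is the image of the uniform measure under the homeomorphism onto $2^{\NN\setminus E}$. Writing $\NN\setminus E=\{e_0<e_1<\cdots\}$: a string $x$ of length $n$ has $\mu[x]>0$ iff $x$ is zero on $E\cap[0,n)$, and then $\mu[x]=2^{-t_n}$ with $t_n:=|\{i:e_i<n\}|$, so $\mu$ is non-atomic exactly because $\NN\setminus E$ is infinite. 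The one property required of $E$ is that its complement be \emph{sparse} in the sense $e_i\ge m_{2i+2}$ for all $i$; then $e_i<n$ forces $m_{2i+2}\le e_i\le n-1$, hence $f(n)\ge f(n-1)\ge f(m_{2i+2})\ge 2i+2$, so $i<f(n)/2$ and $t_n\le f(n)/2+1$.

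Granting $E$, the complexity bound follows by an enumerate‑until‑few‑remain argument as in the theorem above. Let $M$ be a prefix machine that reads a self‑delimiting description of a number $n$, then one of a number $t$, then exactly $t$ further bits, enumerates $E$ until exactly $n-t$ elements of $[0,n)$ have appeared — so the remaining $t$ positions below $n$ are $e_0,\dots,e_{t-1}$ — and outputs the length‑$n$ string carrying those $t$ bits at positions $e_0,\dots,e_{t-1}$ and $0$ elsewhere. Run with $t=t_n$, this recovers every $x$ with $\mu[x]>0$, so $K(x)\le^+ K(n)+K(t_n)+t_n\le^+ K(n)+f(n)/2+2\log f(n)$, which is $\le K(n)+f(n)+O(1)$ once $2\log f(n)\le f(n)/2$; the finitely many smaller $n$ contribute only a constant. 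Since this bound on $K(x)$ is uniform over the strings of length $n$ in the support of $\mu$, averaging gives $K(\mu\uhr n)\le^+ K(n)+f(n)$.

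It remains to build the c.e.\ set $E$ with $\NN\setminus E=\{e_0<e_1<\cdots\}$ infinite and $e_i\ge m_{2i+2}$, using only the $f_s$. The difficulty is that $m_k$ is merely approximable from below — $m^s_k:=\min\{n:f_s(n)\ge k\}$ increases to $m_k$ — so I use movable markers. At stage $s$, place markers $\Gamma_0,\dots,\Gamma_s$ in order, with $\Gamma_i$ going to $e^s_i:=\min\{v:v>e^s_{i-1},\ v\ge m^s_{2i+2},\ v\notin E_{s-1}\}$ (taking $e^s_{-1}=-1$), and then enumerate into $E$ every element of $[0,e^s_s]$ other than the current marker positions. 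The markers then stay strictly increasing, move only rightward, and always avoid $E$; consequently the enumeration of $E$ is monotone (a position once in $E$ is forbidden to all later markers, and markers never move left), and each $e^s_i$ stabilises: once $m^s_{2i+2}$ has settled (it does, since $f_s$ eventually drops below $2i+2$ on $[0,m_{2i+2})$) and, inductively, $e^s_{i-1}$ has settled, the value $\min\{v\ge\max(e_{i-1}+1,m_{2i+2}):v\notin E_{s-1}\}$ can no longer change, because below $e^s_s$ the only non‑elements of $E_s$ are the markers themselves. Its limit $e_i$ satisfies $e_i\ge m_{2i+2}$, the $e_i$ are strictly increasing, and $\NN\setminus E=\{e_i:i\in\NN\}$. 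I expect this last part — convergence of the markers together with the verification that $E$ is genuinely computably enumerable — to be the only real obstacle; everything else is routine.
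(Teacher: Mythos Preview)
Your proof is correct and follows essentially the same route as the paper: construct a c.e.\ set with sparse co-infinite complement, take $\mu$ uniform on the sequences that vanish on it, and bound $K(x)$ for strings in the support via the ``enumerate until few remain'' trick; the reduction of the first assertion to the second via $\hat g(n)=\min\{f(K(m)):m\ge n\}$ is likewise the paper's argument (given there as a remark following the proof, with the minor variant $f'(n)=\min\{f_s(K_s(m)):m\ge n,\,s\ge1\}$). The only substantive difference is that the paper simply asserts the existence of the required c.e.\ set, whereas you supply an explicit movable-markers construction; this extra care is fine and the construction is sound.
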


\begin{proof}   
 There is a recursively enumerable
set $A$ such that, for all $n$, $A$ has up to $n$ and up to a constant $f(n)/2$
non-elements. One let $\mu$ be the measure such that
$\mu(x) = 2^{-m}$ in the case that all ones in $x$ are not in $A$
and $\mu(x) = 0$ otherwise, here $m$ is the number of non-elements
of $A$ below $|x|$. One can see that when $\mu(x) = 2^{-m}$
then $x$ can be computed from
$|x|$ and the string $b_0 b_1 \ldots b_{m-1}$ which describes the
bits at the non-elements of $A$. Thus $K(x) \leq^+
K(|x|)+K(b_0 b_1 \ldots b_{m-1}) \leq^+ K(|x|)+2m$.
It follows that $K(\mu \upharpoonright n) \leq^+ K(n)+f(n)$,
as the $\mu$-average of strings $x \in \{0,1\}^n$
with $K(x) \leq^+ K(n)+f(n)$ is at most $K(n)+f(n)$ plus a constant.
\end{proof}

REMARK. Note that when $f$ is a recursive order function or an
order function which is approximable from above then
there is a further order function $f'$ which is approximable
from above such that $f'(n) \leq^+ f(K(n))$ for all $n$;
one just chooses $f'(n) = \min\{f_s(K_s(m)): m \geq n, s \geq 1\}$.
Thus one can bring the above result into the form that for all
recursive order functions $f$ there is a measure $\mu$ satisfying
$K(\mu \upharpoonright n) \leq K(n)+f(K(n))$.

 The $K$-trivial measure form a convex set. However it is not closed under infinite convex sums. One takes finite sets which pointwise converge to
$\Omega$ and let the $c_k$ fall sufficiently slowly so that at level
$n$ there is still $(n+2)^{-1/2}$ measure on $\Omega \upharpoonright n$
and therefore the corresponding $\mu$-average grows like the squareroot of $n$,
and not like $K(n)$.

 In more detail, let $A_k = \{\ell: \ell \in \Omega \wedge \ell < k\}$ and
$c_k = (k+1)^{-1/2}-(k+2)^{-1/2}$. All sets $A_k$ are finite and thus $K$-trivial. Furthermore, the
sum of all $c_k$ is $1$.

 Let $\mu = \sum_k c_k \cdot \delta_{A_k}$.
Then 
$A_\mu(n) = \sum_{x \in \{0,1\}^n} \mu(x) \cdot K(x)
     \geq (\sum_{m \geq n} c_m) \cdot K(\Omega\upharpoonright n)
     \geq (n+2)^{-1/2} \cdot (n+2) = \sqrt{n+2}$ for almost all $n$
and thus the average grows faster than $K(n)+constant$.
So the measure is not $K$-trivial.

We call a measure $\mu$ \emph{low for $K$}  if for each $z$
\[   \int K^X(z) d\mu (X) \gep K(z). \]
Thus we form the $\mu$-average over all oracles. Clearly if $A$ is low for $K$ as a set then $\delta_A$ is low for $K$.   Merkle and Yu have shown that $\leb$ is low for $K$. So lowness for $K$ does not imply $K$-triviality.   
It would still be interesting to relate  lowness for $K$ with $K$-triviality in the case of measures.

      \section{Yu: A note on $\delta^1_2$}
 Let $\delta^1_2$ be the least ordinal that cannot be presented by a $\Delta^1_2$-well ordering over $\omega$ and $\delta^{1,x}_2$ be the one  relative to $x$. Define $$\Delta_{12}=\{x\mid \delta^{1,x}_2=\delta^1_2\}.$$
 
The following  result must be well known but I have not found a reference.
 \begin{proposition}
 $\Delta_{12}$ is $\Delta^1_3$ but neither $\Sigma^1_2$ nor $\Pi^1_2$.
 \end{proposition}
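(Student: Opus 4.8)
The plan is to analyze the map $x \mapsto \delta^{1,x}_2$, reducing everything to: a quantifier count, the $\Sigma^1_2$ basis theorem, and $\Sigma^1_2$-boundedness for subsets of $WO$ (the set of reals coding well-orderings of $\omega$). First, some bookkeeping. Since $\Delta^1_2 = \Delta^1_2(\emptyset) \subseteq \Delta^1_2(x)$ for every real $x$, one always has $\delta^1_2 \leq \delta^{1,x}_2$, so $\Delta_{12} = \{x : \delta^{1,x}_2 \leq \delta^1_2\}$. The key observation is that $\Delta^1_2 \cap \omega^\omega \subseteq \Delta_{12}$: if $x$ is a $\Delta^1_2$ real and $R$ is a $\Delta^1_2(x)$ well-ordering of $\omega$, substituting the lightface $\Sigma^1_2$ definition of the singleton $\{x\}$ into a $\Sigma^1_2(x)$ definition of $R$ exhibits $R$ as lightface $\Sigma^1_2$; since $R$ is a total order, its complement $\{(m,n) : m = n \lor (n,m) \in R\}$ is also $\Sigma^1_2$, so $R$ is $\Delta^1_2$ and hence $|R| < \delta^1_2$; thus $\delta^{1,x}_2 = \delta^1_2$. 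Finally, by the definition of $\delta^1_2$, for each $\beta < \delta^1_2$ there is a $\Delta^1_2$ well-ordering of $\omega$ of length $\beta$, coded by a $\Delta^1_2$ real in $WO$; hence $\Delta^1_2 \cap WO$, and a fortiori $\Delta_{12} \cap WO$, has order types cofinal in $\delta^1_2$, whereas no $a \in WO$ with $|a| \geq \delta^1_2$ lies in $\Delta_{12}$.

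For the upper bound $\Delta_{12} \in \Delta^1_3$, I would express ``$\delta^{1,x}_2 \leq \delta^1_2$'' in two ways. In $\Sigma^1_3$ form: every $\Delta^1_2(x)$ well-ordering of $\omega$ order-embeds into some $\Delta^1_2$ well-ordering of $\omega$. Quantify over $\Delta^1_2(\cdot)$ relations via pairs $(e,i)$ of a $\Sigma^1_2$-index and a $\Pi^1_2$-index, insisting the two define the same relation. The clause ``$R^x_e = R^x_i$'' is the conjunction of a $\Pi^1_2(x)$ and a $\Sigma^1_2(x)$ set, hence both $\Sigma^1_3(x)$ and $\Pi^1_3(x)$; ``$R^x_e$ is well-founded'' is $\Pi^1_2(x)$ (a descending sequence is $\exists g \forall n[\,\cdots\,]$ with $\Sigma^1_2$ matrix, reading $R^x_e$ off its $\Sigma^1_2$ definition); ``$R^x_e$ is a linear order'' is $\Sigma^1_2(x)$; and ``$R^x_e$ order-embeds into $R_{e'}$'' is $\Sigma^1_2(x)$, the equality clauses licensing free use of whichever of the two definitions of $R^x_e$, $R_{e'}$ is convenient. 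Assembling these, $\forall e\, i\,[\,\cdots \to \exists e'\, i'[\,\cdots\,]\,]$ lies in $\Sigma^1_3$. In $\Pi^1_3$ form, ``$\delta^{1,x}_2 > \delta^1_2$'' asserts the existence of a $\Delta^1_2(x)$ well-ordering $R$ of $\omega$ into which every $\Delta^1_2$ well-ordering of $\omega$ order-embeds — equivalent to $|R| \geq \delta^1_2$, since for well-orderings embeddability is comparison of order type and the $\Delta^1_2$ lengths are exactly $[0,\delta^1_2)$ — and the same estimates put this in $\Sigma^1_3$, so its negation is $\Pi^1_3$. Hence $\Delta_{12} \in \Delta^1_3$.

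For ``not $\Pi^1_2$'': were $\Delta_{12}$ a $\Pi^1_2$ set, $\overline{\Delta_{12}} = \{x : \delta^{1,x}_2 > \delta^1_2\}$ would be $\Sigma^1_2$, and it is nonempty because any $a \in WO$ with $|a| \geq \delta^1_2$ (such exist, $\delta^1_2$ being countable) satisfies $\delta^{1,a}_2 > \delta^1_2$. By the $\Sigma^1_2$ basis theorem (the leftmost branch of the Shoenfield tree of a nonempty $\Sigma^1_2$ set is $\Delta^1_2$), $\overline{\Delta_{12}}$ contains a $\Delta^1_2$ real $x_0$; but $x_0 \in \Delta^1_2 \cap \omega^\omega \subseteq \Delta_{12}$ by the key observation, a contradiction. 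For ``not $\Sigma^1_2$'': were $\Delta_{12}$ a $\Sigma^1_2$ set, $\Delta_{12} \cap WO$ would be a $\Sigma^1_2$ subset of $WO$, hence (by $\Sigma^1_2$-boundedness, the $\delta^1_2$-analogue of Spector's lemma) of order types bounded below $\delta^1_2$; but $\Delta_{12} \cap WO$ is cofinal in $\delta^1_2$, a contradiction.

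The main obstacle is getting the quantifier count for the $\Delta^1_3$ bound exactly right: one must be disciplined about which $\Sigma^1_2$ or $\Pi^1_2$ definition of each index-defined relation is used at each occurrence so that nothing escapes $\Delta^1_3$, the crucial micro-facts being that well-foundedness of a $\Sigma^1_2$ relation on $\omega$ is $\Pi^1_2$, that order-embeddability of two such relations is $\Sigma^1_2$ given their definitions, and that ``two index-defined sets are equal'' sits in $\Delta^1_3$ rather than higher. Beyond that, one should cite the precise statements of the two classical inputs: the $\Sigma^1_2$ basis theorem and $\Sigma^1_2$-boundedness for lightface $\Sigma^1_2$ subsets of $WO$.
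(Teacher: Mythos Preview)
Your $\Delta^1_3$ bound and your ``not $\Pi^1_2$'' are fine. For the upper bound the paper takes a different route: rather than quantifying over $\Sigma^1_2/\Pi^1_2$ index pairs, it uses the characterisation of $\delta^1_2$ (and $\delta^{1,x}_2$) as the supremum of order types coded by $\Pi^1_1$-singletons (resp.\ $\Pi^1_1(x)$-singletons), together with an $L$-level witness, which makes the quantifier count cleaner. Your ``not $\Pi^1_2$'' via the $\Sigma^1_2$ basis theorem is exactly the paper's argument.

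The ``not $\Sigma^1_2$'' step, however, does not go through: the ``$\Sigma^1_2$-boundedness'' you invoke is simply false. The set $WO$ itself is $\Pi^1_1\subseteq\Sigma^1_2$, yet $\sup\{|a|:a\in WO\}=\omega_1>\delta^1_2$. Spector's $\Sigma^1_1$-boundedness works precisely because $WO\notin\Sigma^1_1$; once one passes to $\Sigma^1_2$, $WO$ lies in the pointclass and the analogue collapses. Restricting attention to $\Delta_{12}\cap WO\subseteq\{a\in WO:|a|<\delta^1_2\}$ does not rescue the argument: you would need that no $\Sigma^1_2$ subset of $WO$ has order types cofinal in but bounded by $\delta^1_2$, and there is no such theorem.

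The paper's proof of ``not $\Sigma^1_2$'' is entirely different and uses randomness together with Shoenfield absoluteness. One fixes $y\in L$ computing every $\Delta^1_2$ real, so that $y$-randomness implies $\Delta^1_2$-randomness. If $\Delta_{12}$ were $\Sigma^1_2$, then $R=\{r: r\text{ is }y\text{-random and }r\in\Delta_{12}\}$ would be $\Sigma^1_2(y)$; an $L$-random real witnesses $R\neq\emptyset$, and by Shoenfield absoluteness $R$ already has a member $r\in L$. But any $r\in R$ must be $L$-random, contradicting $r\in L$.
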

 \begin{proof}
 $x\in \Delta_{12}$ if and only if every $\Pi^1_1(x)$-singleton $z$ coding a well ordering of $\omega$ is bounded by a $\Pi^1_1$-singleton $z_0$ coding a  well ordering of $\omega$ if and only if there is a real $s$ coding a well ordering of $\omega$ such that $L_s$ contains all the $\Pi^1_1(x)$-singletons and every real in $L_s$ is $\Delta^1_2$. So $\Delta_{12}$ is a $\Delta^1_3$-set.
  
 Since every nonempty $\Sigma^1_2$-set contains a $\Delta^1_2$-member, we have that  $\Delta_{12}$ is not $\Pi^1_2$. Now suppose that  $\Delta_{12}$ is   $\Sigma^1_2$. Then let $y\in L$ be a real computing all the $\Delta^1_2$-reals. Then every $y$-random is $\Delta^1_2$-random. By the assumption, $R=\{r\mid r \mbox{ is }y\mbox{-random and } r\in \Delta_{12}\}$ is a $\Sigma^1_2(y)$-set. If $V$ contains an $L$-random real, then $R$ is not empty. So by Shoenfield's absoluteness, $R$ is not empty and contains a real $r\in L$. But every real $r\in R$ must be $L$-random, a contradiction. 
 \end{proof}

 

  \part{Group  theory and its connections with logic}
   \section{Nies and Schlicht: the normaliser of a finite permutation group} 
Let $G$ be a group.  The group of inner automorphisms $\mathrm{Inn}(G)$  forms a normal subgroup of $\mathrm{Aut}(G)$. The quotient group is called the group of outer automorphisms,   denoted by  $\mathrm{Out}(G)$. For instance, $\mathrm{Out}(S_6)$ has 2 elements. For more examples, note that since $G'$ is invariant,  there is a canonical surjection $\mathrm {Aut }(G) \to \mathrm{Aut}(G_{ab})$ with kernel containing $\mathrm{Inn}(G)$.  In the case of $F_2$ equality holds, so that $ \mathrm{Out}(F_2) \cong GL_2(\ZZ)$.

It is well-known that no cyclic group of odd order is of the form $\Aut(G)$ for any group $G$. On the other hand, every finite group is the outer automorphism group of some group $N$ which can be chosen to be fundamental  group of a closed hyperbolic 3-manifold (a result of Sayadoshi Kojima). See \cite{Bumagin.Wise:02} for background.

Here is a simple (and known) fact.  Given a finite group $G$ with domain $\{0,\ldots, n-1\}$, we think of $G$ as embedded into $S_n$ via the left regular representation $ g \to \tau_g$ where $\tau_g(x) = gx$.  (E.g.\ for $G= S_6$, we have $n=720$.)  Let $N_G$ denote the normaliser of $G$ in $S_n$. 

\begin{prop} \label{prop:Sn_normaliser} 

There is a canonical surjection $R \colon N_G \to \mathrm{Aut}(G)$ mapping $G$ to $\mathrm{Inn}(G)$, thereby showing that  $N_G/G $ is   isomorphic to $\mathrm{Out}(G)$. \end{prop}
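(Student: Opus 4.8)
The plan is to have $N_G$ act by conjugation on its normal subgroup $\tau(G)$, which is an internal copy of $G$, and to track where $G$ itself is sent by the induced map on automorphism groups. Concretely, for $\pi \in N_G = N_{S_n}(\tau(G))$ the map $x \mapsto \pi x \pi^{-1}$ restricts to an automorphism of $\tau(G)$ because $\pi$ normalises it; transporting along the isomorphism $g \mapsto \tau_g$ yields $R(\pi) \in \mathrm{Aut}(G)$, characterised by $\pi \tau_g \pi^{-1} = \tau_{R(\pi)(g)}$ for all $g$. Functoriality of conjugation makes $R \colon N_G \to \mathrm{Aut}(G)$ a group homomorphism, which is the object the proposition refers to.

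Next I would check the two properties of $R$. For surjectivity I would use that the point set $\{0,\dots,n-1\}$ is literally $G$: any $\alpha \in \mathrm{Aut}(G)$ is itself a permutation $\pi_\alpha$ of this set, and the direct computation $\pi_\alpha \tau_g \pi_\alpha^{-1}(x) = \alpha\bigl(g\,\alpha^{-1}(x)\bigr) = \alpha(g)\,x = \tau_{\alpha(g)}(x)$ shows $\pi_\alpha \in N_G$ with $R(\pi_\alpha) = \alpha$. For the behaviour on $G$, note that for $g \in G$ the automorphism $R(\tau_g)$ is just $x \mapsto g x g^{-1}$, i.e.\ the inner automorphism determined by $g$, so $R(\tau(G)) = \mathrm{Inn}(G)$.

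It then remains to read off the quotient. Composing $R$ with the canonical projection $\mathrm{Aut}(G) \to \mathrm{Out}(G)$ gives a surjection $N_G \to \mathrm{Out}(G)$, and one wants its kernel to be exactly $G$, equivalently $R^{-1}(\mathrm{Inn}(G)) = G$. This reduces to understanding $\ker R = C_{S_n}(\tau(G))$, the centraliser in $S_n$ of the regular permutation group $\tau(G)$; here I would invoke the standard fact that the centraliser of a regular subgroup is again a regular subgroup (of order $n$) and analyse how it sits inside $N_G$ relative to $\tau(G)$, after which the first isomorphism theorem, together with $R(G) = \mathrm{Inn}(G)$, delivers $N_G/G \cong \mathrm{Out}(G)$. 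An alternative route to the same point is to first establish the holomorph description $N_G \cong G \rtimes \mathrm{Aut}(G)$ and argue inside that.

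The steps defining $R$, proving surjectivity, and computing $R(G)$ are routine verifications; the main obstacle is this final step — pinning down the kernel of the composite map to $\mathrm{Out}(G)$. Everything hinges on the precise location of $C_{S_n}(\tau(G))$ relative to $\tau(G)$ inside $N_G$, so this is where the $\mathrm{Inn}$-versus-$\mathrm{Out}$ bookkeeping must be done carefully, and it is where I expect to spend essentially all of the real work.
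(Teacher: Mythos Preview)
Your construction of $R$, the homomorphism check, the surjectivity argument via $\mathrm{Aut}(G)\hookrightarrow N_G$ as permutations of the underlying set, and the computation $R(\tau_u)=(x\mapsto uxu^{-1})$ are exactly what the paper does. So through the first three steps your proposal and the paper's proof coincide.

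You are right to flag the final step as the delicate one, and in fact you have located a genuine problem that the paper's own proof glosses over. The kernel of the composite $N_G\to\mathrm{Out}(G)$ is $R^{-1}(\mathrm{Inn}(G))=\tau(G)\cdot\ker R$, and $\ker R=C_{S_n}(\tau(G))$ is the \emph{right} regular representation $\rho(G)$. Since $\tau(G)\cap\rho(G)=\tau(Z(G))$, the product $\tau(G)\rho(G)$ strictly contains $\tau(G)$ whenever $G$ is non-abelian, so the first isomorphism theorem does \emph{not} yield $N_G/\tau(G)\cong\mathrm{Out}(G)$. The paper's line ``$\phi=\tau_u$ iff $\phi\tau_g\phi^{-1}=\tau_{ugu^{-1}}$ for each $g$'' fails in the backward direction for exactly this reason: any $\phi\in\tau_u\rho(G)$ satisfies the right-hand side. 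Concretely, for $G=S_3$ the holomorph has order $|G|\cdot|\mathrm{Aut}(G)|=36$, so $|N_G/G|=6=|\mathrm{Aut}(G)|$, not $|\mathrm{Out}(G)|=1$. The holomorph description you mention as an alternative route makes this transparent: $N_G\cong G\rtimes\mathrm{Aut}(G)$ gives $N_G/G\cong\mathrm{Aut}(G)$, and one must quotient by $\tau(G)\rho(G)$ rather than by $\tau(G)$ alone to reach $\mathrm{Out}(G)$. So neither your plan nor the paper's argument can be completed as stated; the proposition itself is incorrect for non-abelian $G$.
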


\begin{proof} A canonical map $R\colon N_G \to \mathrm{Aut}(G)$ is defined  by 
\bc $R(\phi) (g) = h$ if $\phi \tau_g \phi^{-1} = \tau_h$. \ec
Clearly $R(\phi)$ is an automorphism of $G$ for each $\phi \in N_G$. 
To check that $R$ is a homomorphism, note that  for $\phi, \psi  \in N_G$ \bc $R(\phi \psi)(g) = h \LR \phi \psi \tau_g \psi^{-1} \phi^{-1}= \tau_h$. \ec Now   $\tau_{\psi(g)} = \psi \tau_g \psi^{-1}$. So the equation above implies  $R(\phi) (R(\psi)(g)) = h$. 

$R$  is a surjection:  $\phi \in \mathrm{Aut}(G)$ iff  $R(\phi) = \phi$ (where we identify $g$ with $\tau_g$).

Finally,  $\phi = \tau_u$ iff $\phi \tau_g \phi^{-1} = \tau_{u g u^{-1}}$ for each $g$ iff  $R(\phi)$ is inner. \end{proof}
 
 In fact we don't need that  $G$ is finite.

    \section{Kaplan, Nies,  Schlicht and others: closed subgroups of $\S$}
 
 We make some remarks on closed subgroups of $\S$. These are the automorphism groups of   structures with domain $\omega$. We in particular consider the following kind. 
 A closed subgroup $G$ of $\S$ is called \emph{oligomorphic} if it has only finitely many $n$-orbits, for each $n$. These are the automorphism groups of the $\aleph_0$-categorical structures with domain $\omega$.  We say that a topological  group $G$  is   \emph{quasi-oligomorphic} if it is in a topological group isomorphism with an oligomorphic group.  
 
 \subsection{The centre}

Let $G$ be a closed subgroup of $\S$.  For $p \in \omega$, by $k_p(G)$ we denote the number of  orbits  of the natural action of $G$  on $\omega^p$; such orbits will be   called $p$-orbits. (The parameter  $k_p(G)$ is denoted $F_p^*(G)$ in \cite{Cameron:90}.)  For $r \in \omega$ let $k_2(G,r)$ denote the number of $2$-orbits containing a  pair of the form $(r,t)$ (which only depends on the 1-orbit of $r$). 
 Suppose that $k_1(G) = n$ and let  $r_1, \ldots, r_n \in \omega$ represent the $1$-orbits.
 
 \begin{fact} $|C(G)| \le \prod_{i \le n} k_2(G, r_i)$. In particular, if $G$ is $1$-transitive then the size of the centre  is at most $k_2(G)$. \end{fact}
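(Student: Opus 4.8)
The plan is to exhibit an injection from $C(G)$ into a product set of the right size. Concretely, consider the map
$$\Phi\colon C(G)\to \omega^n,\qquad \Phi(z)=(z(r_1),\dots,z(r_n)).$$
First I would check that $\Phi$ is injective. Suppose $z,z'\in C(G)$ agree on each $r_i$. Given an arbitrary $x\in\omega$, since $r_1,\dots,r_n$ represent all the $1$-orbits there are $i\le n$ and $g\in G$ with $x=g(r_i)$; as $z$ commutes with $g$, $z(x)=z(g(r_i))=g(z(r_i))=g(z'(r_i))=z'(x)$. Hence $z=z'$, so $\Phi$ is injective.

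Next I would bound the range of $\Phi$ one coordinate at a time. Fix $i$ and set $V_i=\{\,z(r_i):z\in C(G)\,\}$. The key claim is that the assignment $t\mapsto(\text{the }2\text{-orbit of }(r_i,t))$ is injective on $V_i$; since every such $2$-orbit contains a pair of the form $(r_i,\cdot)$, this yields $|V_i|\le k_2(G,r_i)$. To prove the claim, suppose $z,z'\in C(G)$ and $(r_i,z(r_i))$, $(r_i,z'(r_i))$ lie in the same $2$-orbit. By definition of the action on $\omega^2$ this is witnessed by some $g\in G$ with $g(r_i)=r_i$ and $g(z(r_i))=z'(r_i)$. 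Since $z$ commutes with $g$ and $g$ fixes $r_i$, we get $z(r_i)=z(g(r_i))=g(z(r_i))=z'(r_i)$, proving the claim.

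Combining the two steps, $|C(G)|=|\Phi(C(G))|\le\prod_{i\le n}|V_i|\le\prod_{i\le n}k_2(G,r_i)$. The ``in particular'' clause follows immediately: if $G$ is $1$-transitive then $n=1$, every $2$-orbit contains a pair $(r_1,\cdot)$, so $k_2(G,r_1)=k_2(G)$ and $|C(G)|\le k_2(G)$.

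There is no serious obstacle here; the one point to state carefully is that when two pairs $(r_i,s)$ and $(r_i,s')$ lie in a common $2$-orbit, the witnessing element of $G$ automatically fixes the first coordinate $r_i$ exactly (not merely maps it somewhere in its $1$-orbit), which is exactly what makes the commutation argument in the second step collapse $s$ and $s'$.
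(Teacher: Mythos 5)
Your proof is correct and follows essentially the same route as the paper: the paper also first shows $|C\cdot r|\le k_2(G,r)$ by noting that distinct values $cr\neq dr$ for central $c,d$ give pairs $(r,cr),(r,dr)$ in distinct $2$-orbits (your claim about the witness fixing $r_i$), and then injects $C(G)$ into $\prod_i C\cdot r_i$ via $c\mapsto (cr_1,\dots,cr_n)$, using exactly your commutation argument with $x=g(r_i)$ to get injectivity. No gaps; the write-up is if anything slightly more explicit than the paper's.
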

 
 \begin{proof} Write $C = C(G)$. 
 For any $r \in \omega$, and $c , d\in C $, if $cr \neq d r$ then $(r,cr)$ and $(r, dr)$ are in different $2$-orbits. Hence $|Cr| \le k_2(G,r)$.
 
 Consider now  the natural  left action $C \curvearrowright  \prod_i C   r_i$ (Cartesian product of sets). If $c , d\in C $ are distinct then   $c g r_i \neq d g r_i$ for some $i$ and $g \in G$, so that $c r_i \neq d r_i$. Hence $|C| \le \prod_{i \le n} |C \cdot r_i|$. This shows the required bound. \end{proof}

 Greg Cherlin suggested another way to prove that  for oligomorphic $G$ the centre $C(G)$  is finite   (but without an explicit bound on its size).  We may suppose $G = \Aut(M)$ for an  $\aleph_0$-categorical structure $M$  with domain~$\omega$. Note that $\Phi \in \Aut(M)$ is definable as a  binary relation on $M$ iff $\Phi$ is invariant under the natural action of  $G$ on $M$, i.e., $\Phi \in C(G)$.  Since there are only finitely many definable binary relations, $C(G)$ is finite.

  We consider   examples of oligomorphic groups $G$ with a nontrivial centre.  
  
 \n  1. For any finite abelian group $A$, the natural action of $A \times \S$ on $A \times \omega$ yields an 1-transitive oligomorphic group with centre $A$. The number of $2$-orbits is at least $|A|$ because $(an, a'n') \approx (bm, b'm')$ implies that $a-b = a'-b'$.  (In fact it is $2 |A|$.) 
 
 \n 2. Let $M_1$ be the structure with one equivalence relation $E$ that has all classes of size $2$; say,    for $x,y \in \omega$
\bc  $ x E y \lra  x \mod 2 = y \mod 2$.  \ec
Write $C_k = \ZZ/ k\ZZ$.  We have $ G_1 := \Aut(M_1) = C_2 \wr \S$, where $\wr$ denotes the unrestricted wreath product.   Here $\S$ is viewed with its natural action on  $\omega$; so $G $ is an extension of $L= C_2^\omega$ by $\S$ with $\S$ acting by $\phi \cdot f = f \circ \phi$, for $f\in L$.  (Note that $L$ is the automorphism group of the structure where the individual equivalence classes are now  distinct unary predicates.  $G$ is the normaliser of $L$ in $\S$.)   The centre $C(G)$ consists of the identity and the  automorphism that maps  each element to the other one in its $E$ class.  The centre of $G/C(G)$ is trivial. 
 
 \n 3. Let $M_2$ be the structure with   equivalence relations $E \sub F$ such that each $E$-class has size 2 and each $F$-class has size 4. Then  the automorphism group of a single $F$-class is $C_2 \wr C_2$, and hence $G_2 := \Aut (M_2)  = (C_2 \wr C_2) \wr \S)$.    As before,  the centre $C(G_2)$ consists of the identity and the  automorphism  that switches each element in its $E$ class.  We have $G_2 / C(G_2)  \cong G_1$. 
 
 Similarly,   for each $n$ there is an oligomorphic group $G_n = \Aut(M_n)$ with a  chain of $n$ higher centres.  
 
 \subsection{The central quotient} The main purpose in this section is to show that \emph{for oligomorphic $G$, the central quotient $G/C(G)$ is quasi-oligomorphic}. Some facts needed along the way hold in more generality.
 
 Suppose a group $G$ acts on a set $X$ and $N \trianglelefteq G$. Write $\sim_N$ for the orbit equivalence relation of the subaction of $N$. Note that $G$ acts naturally on $X/\sim_N$ via $g\cdot [x] = [g\cdot x]$ (where $[x]$ is the $\sim_N$ class of $x$).  Since elements of $N$ act as the identity, $G/N$ acts on $X / \sim_N$. 
 
 Suppose now $G$ initially acts faithfully on a set $Y$, say $Y = \NN$. Let $X =  Y \times Y$ and let  $G$ act on $X$ by the usual diagonal action. Let $N= C(G)$.  
 
 \begin{fact} The action of $G /C(G)$ on $X / \sim_{C(G)}$ is faithful. \end{fact}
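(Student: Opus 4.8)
The claim is that $G/C(G)$ acts faithfully on $X/\!\sim_{C(G)}$ where $X = Y\times Y$, $Y=\NN$, and $G$ acts diagonally. An element of $G/C(G)$ is a coset $gC(G)$, and it acts trivially on $X/\!\sim_{C(G)}$ iff for every pair $(a,b)\in Y\times Y$ we have $(ga,gb)\sim_{C(G)}(a,b)$, i.e.\ there is $c\in C(G)$ (depending on $(a,b)$) with $ga = ca$ and $gb=cb$. So the task is: assuming this holds for all $(a,b)$, conclude $g\in C(G)$, i.e.\ $g$ commutes with every element of $G$.

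\textbf{Key steps.} First I would specialise to the diagonal pair $(a,a)$ to observe that for each single point $a\in Y$ there is $c_a\in C(G)$ with $ga = c_a a$; so $g$ agrees with \emph{some} central element pointwise-at-$a$, but a priori the central element varies with $a$. The heart of the argument is to upgrade this to a single central element: I would take two points $a,b$ and apply the hypothesis to the pair $(a,b)$, obtaining one $c\in C(G)$ with $ga=ca$ \emph{and} $gb=cb$ simultaneously. Running this over all pairs, the point is that the map $a\mapsto ga$ is ``locally central.'' To globalise, consider $h := c^{-1}g$ for the $c$ obtained from a pair $(a,b)$: then $h$ fixes $a$ and $b$. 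If I can show some such $h$ fixes \emph{every} point — equivalently that the $c_a$ can all be taken equal — then $g=c\in C(G)$ and we are done. The clean way: suppose $g\notin C(G)$; then there is $\gamma\in G$ with $g\gamma\neq \gamma g$, so $g\gamma a_0 \neq \gamma g a_0$ for some $a_0\in Y$. Now apply the faithfulness-of-action hypothesis to the pair $(a_0,\gamma a_0)$: we get $c\in C(G)$ with $ga_0 = ca_0$ and $g\gamma a_0 = c\gamma a_0$. Since $c$ is central, $c\gamma a_0 = \gamma c a_0 = \gamma g a_0$, giving $g\gamma a_0 = \gamma g a_0$, a contradiction. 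Hence $g\in C(G)$, so $gC(G)$ is the identity coset, proving faithfulness.

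\textbf{Main obstacle.} There is essentially no obstacle once one writes out what ``$(ga,gb)\sim_{C(G)}(a,b)$ for all pairs'' unpacks to; the only subtlety is recognising that one must use a \emph{pair} $(a_0,\gamma a_0)$ rather than single points, so that the \emph{same} central $c$ handles both $a_0$ and $\gamma a_0$, and then centrality of $c$ does the rest. So the real content is the bookkeeping in the previous sentence; the diagonal action on $X=Y\times Y$ (rather than on $Y$ itself) is precisely what makes this work, and I would highlight that this is the reason for passing to pairs in the first place. I would also remark that nothing here uses oligomorphy of $G$ — the fact holds for any faithful action of any group — which is why it is stated separately as a lemma used later.
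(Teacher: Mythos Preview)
Your argument is correct and is essentially the same as the paper's. Both proofs pick $\gamma$ (the paper writes $\eta$) and $a_0$ (the paper writes $w$) witnessing $g\gamma a_0\neq\gamma g a_0$, and then test the pair $(a_0,\gamma a_0)$; the paper phrases the conclusion as ``any $h$ with $h\cdot(w,\eta w)=g\cdot(w,\eta w)$ fails to commute with $\eta$'' rather than as a contradiction, but the computation is the same one you wrote.
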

 
 To see this, suppose $g \not \in N =C(G)$. So grab $\eta \in G$ such that \bc $ (g \eta) \cdot w \neq (\eta g) \cdot w$. \ec  Let $w'= \eta \cdot w$.  Then $ g \cdot (w,w') \not \sim_N (w,w')$,  because any element $h $ of $G$ such that $h \cdot (w,w') = g \cdot (w,w')$ satisfies $(h \eta) \cdot w \neq (\eta h) \cdot w$, so that $h \not \in C(G)$.

 We now switch to topological setting. Given  a Polish group $H$  with a faithful  action $\gamma \colon H \times V \to V$, say  for a  countable set $V$,  we obtain a monomorphism $\Theta_\gamma \colon H \to S_V$ given by $\Theta_\gamma(g)(k)= \gamma(g,k)$. A Polish group action is continuous iff it is separately continuous (i.e. when one argument is fixed). In the case of an action on countable $V$ (with the discrete topology), the latter condition  means that  
 
 \bi \item[(a)] for each $k,n \in V$, the set $\{ g \colon \gamma(g,k)=n\}$ is open. \ei

 So $\gamma$ is continuous iff $\Theta_\gamma$ is continuous. 

\begin{definition} 
We say that a faithful  action $\gamma \colon H \times V \to V$   is \emph{strongly continuous} if the embedding $\Theta_\gamma$ is topological. 
\end{definition} 

Equivalently, the action is continuous,  and for each neighbourhood $U$ of $1_H$, also $\Theta_\gamma(U)$ is open, namely,   

 \bi \item[(b)]
for each neighbourhood $U$ of $1_H$, there is finite set $B \sub V$  such that $\fa k \in B \, \gamma(g,k) = k$ implies $g \in U$.  \ei Since $H$ is Polish, strong continuity of the action  implies that $H$  is topologically isomorphic  via $\Theta_\gamma$ to a closed subgroup of $S_V$ (see e.g.\  \cite[Prop.\ 2.2.1]{Gao:09}).
 
 Now consider the case that $Y = \NN$ and  $G$ is a closed subgroup of $\S$. Since $C(G)$ is closed, $H= G/C(G)$ is naturally a Polish group via  the quotient topology:  for $C \le U \le G$, the   subgroup $U/C \le H$ is declared to be open iff $U$ is open in $G$.  (See e.g.\  \cite[Prop.\ 2.2.10]{Gao:09}.)

Let  $X = Y \times Y$ as above. Suppose that $V:= X/\sim_{C(G)}$ is infinite (e.g.\ when $C(G)$ is finite),  so through the action $\gamma$ above we obtain  an (algebraic) embedding $\Theta_\gamma$ of $G/C(G)$ into $S_V$ (which can be identified with $\S$).  
 
 \begin{claim}  \label{cl:1tr} Suppose that $G$ is a closed subgroup of $\S$ that  acts 1-transitively on $Y = \NN$. Suppose that   $C(G)$ is finite. Then $\Theta_\gamma \colon G/C(G) \to S_V$  is a topological embedding. \end{claim}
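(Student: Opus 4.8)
The plan is to verify the two conditions \textup{(a)} and \textup{(b)} that characterise strong continuity of the action $\gamma$ of $H := G/C(G)$ on $V = X/\!\sim_{C(G)}$; since faithfulness of this action has already been established, verifying these two conditions is exactly what is needed to conclude that $\Theta_\gamma$ is a topological embedding, and afterwards \cite[Prop.\ 2.2.1]{Gao:09} upgrades this to an embedding with closed image. Write $\pi\colon G\to H$ for the quotient map; it is continuous, open and surjective. For finite $F\sub\NN$ let $N_F=\{g\in G\colon g\upharpoonright F=\mathrm{id}\}$ be the pointwise stabiliser of $F$ in $G$. The sets $N_F$ form a neighbourhood basis of $1_G$, so, $\pi$ being open and surjective, the sets $\pi(N_F)$ form a neighbourhood basis of $1_H$; hence in \textup{(b)} it suffices to handle the neighbourhoods of the form $\pi(N_F)$. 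Note also that, $C(G)$ being finite, every $\sim_{C(G)}$-class in $\NN\times\NN$ is finite, so $V$ is infinite and $S_V$ may be identified with $\S$.

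For \textup{(a)}, I would fix $[x],[y]\in V$ with $x=(a,b)$ and $y=(a',b')$ and observe that the $\pi$-preimage of $\{h\in H\colon h\cdot[x]=[y]\}$ equals $\bigcup_{c\in C(G)}\{g\in G\colon g(a)=c(a')\,\land\,g(b)=c(b')\}$. Each set in this finite union is the intersection of $G$ with a basic clopen subset of $\S$, hence open in $G$; so the preimage is open, and therefore $\{h\colon h\cdot[x]=[y]\}$ is open in $H$ by definition of the quotient topology. Thus the action $\gamma$ is separately, hence jointly, continuous.

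For \textup{(b)}, given finite $F$ I would take $B=\ES$ if $F=\ES$, and otherwise fix some $a_0\in F$ and set $B=\{[(a_0,a)]\colon a\in F\}$. The key observation is that, because $G$ acts transitively on $\NN$, a central element $c\in C(G)$ is determined by the single value $c(a_0)$: if $c,c'\in C(G)$ agree at $a_0$ then for any $b$ we choose $g\in G$ with $g(a_0)=b$ and get $c(b)=g(c(a_0))=g(c'(a_0))=c'(b)$. Now suppose $h=\pi(g)$ fixes every class in $B$. For each $a\in F$ there is $c_a\in C(G)$ with $(g(a_0),g(a))=(c_a(a_0),c_a(a))$; since $c_a(a_0)=g(a_0)$ does not depend on $a$, all the $c_a$ equal a single $c$, and then $(c^{-1}g)(a)=a$ for every $a\in F$. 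Hence $c^{-1}g\in N_F$, so $g\in cN_F\sub C(G)N_F=N_FC(G)$ using centrality of $C(G)$, and therefore $h\in\pi(N_F)$, as required.

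I expect condition \textup{(b)} to be the main obstacle, and within it the point that a central element of a transitive permutation group is pinned down by its image of a single point — this is precisely what makes a \emph{finite} set $B$ of pair-classes suffice. The remaining ingredients — fixing the right neighbourhood bases on both sides of $\pi$, and using centrality of $C(G)$ so that $cN_F$ lands inside $N_FC(G)$ — are routine bookkeeping.
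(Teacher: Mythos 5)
Your proof is correct and takes essentially the same route as the paper: both verify conditions (a) and (b), using finiteness of $C(G)$ for (a) and, for (b), the key fact that a central element of a faithful $1$-transitive permutation group is determined by its value at a single point. The only (cosmetic) difference is that you take $B=\{[(a_0,a)]\colon a\in F\}$ anchored at one point, while the paper uses all of $(R\times R)/\sim_{C(G)}$; the argument is otherwise identical.
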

 \begin{proof} 
 We check the conditions (a) and (b) above.  
 
 \medskip 
 
\n  (a) Suppose that $k = [(w_0, w'_0)], n  = [(w_1, w'_1)]$. Then $\gamma(g,k)=n$ iff there are $(v_0, v'_0) \sim_C (w_0, w'_0)$ and  $(v_1, v'_1) \sim_C (w_1, w'_1)$ such that $g \cdot v_0 = v_1$ and $g \cdot v'_0 = v'_1$. Since $C(G)$ is finite this condition is open.

 \medskip 
 \n  (b) An open  neighbourhood   of  $1_{G/C}$ has the form $U/C$ wherewhere $U \sub G$ is   open   and $C \le U$.   By definition of the topology on $G$, we may assume that  $U = G_{R} C$ for some finite $R \sub Y$ (as usual $G_R$ is the pointwise stabiliser). Let $B = (R \times R) / \sim_C$. Consider $g = pC \in H$, where $p \in G$. 
 
 Suppose that  $\fa k \in B \, \gamma(g,k) = k$. This means that for each $u,v \in R$, there is a $c_{u,v} \in C$ such that $p \cdot  (u,v) = c_{u,v} \cdot  (u,v)$. Since $G$ acts faithfully and 1-transitively on $Y$, for each $c,d \in C$, and each $y \in Y$, $c\cdot  y = d \cdot y$ implies that $c=d$. Therefore given another pair $r,s \in R$, $c_{u,v} = c_{u,s} = c_{r,s}$. Let $c$ be this unique witness. Then $p \cdot u = c \cdot u$ for each $u \in R$, hence  $p \in G_R C$ and therefore $g \in U/C$.
 \end{proof}

 \begin{thm} Let   $G$ be oligomorphic.  The central quotient $G/C(G)$ is quasi-oligomorphic (i.e.\ homeomorphic to an oligomorphic group). \end{thm}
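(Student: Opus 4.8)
The plan is to reuse the construction behind Claim~\ref{cl:1tr}, but with a higher power of $Y=\NN$ in order to cope with the fact that an oligomorphic $G$ need not be transitive. Write $C=C(G)$; by the results above $C$ is finite, and if $C=\{1\}$ then $G/C(G)=G$ is already oligomorphic, so assume $C\neq\{1\}$. Let $n=k_1(G)$ and fix representatives $t_1,\dots,t_n\in\NN$, one from each $1$-orbit, and set $T=\{t_1,\dots,t_n\}$. Since a central element fixing one point of a $1$-orbit fixes the whole orbit (if $c(y)=y$ and $z=g(y)$ then $c(z)=g(c(y))=z$), a member of $C$ is determined by its values on $T$, i.e.\ $\bigcap_i\operatorname{Stab}_C(t_i)=\{1\}$. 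I would then take $X=Y^{\,n+1}$ with the diagonal $G$-action, $V=X/{\sim_C}$, let $\gamma$ be the induced action of $H:=G/C$ on $V$, and $\Theta_\gamma\colon H\to S_V$ the associated embedding. As in the faithfulness argument preceding Claim~\ref{cl:1tr}, $\gamma$ is faithful (a $g\notin C$ moving some pair modulo $C$ a fortiori moves the corresponding padded $(n+1)$-tuple), and $V$ is countably infinite because the $\sim_C$-classes are finite.

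Next I would verify that $\gamma$ is strongly continuous, i.e.\ conditions (a) and (b) of the set-up. Condition~(a) goes exactly as in Claim~\ref{cl:1tr}: since $C$ is finite, for classes $k,\ell\in V$ the set $\{g:\gamma(g,k)=\ell\}$ is a finite union (over choices of representatives) of sets of the form ``$g$ maps one representative tuple to another'', each of which is clopen in $G$. Condition~(b) is where the non-transitive case genuinely differs. Recall that the sets $G_SC/C$, for $S\subseteq\NN$ finite, form a neighbourhood basis of $1_H$, since $G_SC$ is open and $C$-saturated, so it suffices to handle $U=G_SC/C$. Given such an $S$, I would put into the required finite set $B\subseteq V$ the class of $(t_1,\dots,t_n,t_1)$ together with the classes of $(s,t_1,\dots,t_n)$ for each $s\in S$. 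If $p\in G$ and $\Theta_\gamma(pC)$ fixes every element of $B$, then fixing the first class produces some $c_0\in C$ with $c_0\uhr{T}=p\uhr{T}$, and $c_0$ is unique by the property of $T$ above; fixing the class of $(s,t_1,\dots,t_n)$ produces some $c\in C$ agreeing with $p$ on $\{s\}\cup T$, so $c=c_0$ and hence $c_0(s)=p(s)$. Thus $c_0^{-1}p\in G_S$, i.e.\ $pC\in G_SC/C=U$, which is (b).

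With (a) and (b) established, the rest is assembly. The quotient $H=G/C(G)$ is Polish in the quotient topology (\cite[Prop.~2.2.10]{Gao:09}), and a faithful strongly continuous action on a countable set makes $\Theta_\gamma$ a topological isomorphism of $H$ onto a closed subgroup $H^\ast$ of $S_V$ (\cite[Prop.~2.2.1]{Gao:09}). Finally I would check that $H^\ast$ is oligomorphic: its orbits on $V^p$ are the images, under the quotient map $Y^{(n+1)p}=(Y^{\,n+1})^p\to V^p$, of the $G$-orbits on $Y^{(n+1)p}$, so there are at most $k_{(n+1)p}(G)<\infty$ of them. Hence $G/C(G)$ is topologically isomorphic to the oligomorphic group $H^\ast$, i.e.\ quasi-oligomorphic.

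The one genuinely new ingredient, and the step I expect to require the most care, is condition~(b): without transitivity $C$ need no longer act freely on $\NN$, so a single pair of points does not pin down the central witness, which is why one must pass from $Y^2$ (as in Claim~\ref{cl:1tr}) to $Y^{\,n+1}$ and thread a transversal $T$ of all $1$-orbits through every tuple defining $B$ — this is exactly what restores uniqueness of the witness $c_0$. The remaining items — condition~(a), the observation that the $G_SC/C$ form a neighbourhood basis, and the orbit count for $H^\ast$ — are routine.
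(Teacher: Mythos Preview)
Your argument is correct, but the route differs from the paper's. The paper does not extend Claim~\ref{cl:1tr} to the intransitive case; instead it first reduces to a $1$-transitive realisation of $G$ by a separate observation (attributed to Tsankov): letting $W$ be the pointwise stabiliser of one representative from each $1$-orbit, the left-translation action of $G$ on the coset space $G/W$ is faithful and $1$-transitive, and the induced embedding of $G$ into $S_{G/W}$ is topological with oligomorphic image. Claim~\ref{cl:1tr} is then applied verbatim to this transitive copy of $G$.

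You trade that reduction for a direct argument on $Y^{\,n+1}$, threading a transversal $T$ of the $1$-orbits through every tuple in $B$ so as to force uniqueness of the central witness $c_0$ in condition~(b). This keeps the proof self-contained (no auxiliary change of $G$-set) at the price of a slightly heavier verification of~(b); the paper's version keeps Claim~\ref{cl:1tr} clean but must separately check that the coset action is faithful, topological, and oligomorphic. Both are short; yours has the pleasant feature of specialising exactly to Claim~\ref{cl:1tr} when $n=1$.
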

 \begin{proof} 
 It is   known (as pointed out to us by Todor Tsankov) that we may assume $G$ is 1-transitive.  To see this,  one shows that there is an open subgroup $W$ such that the left translation action $\gamma \colon G\curvearrowright G \backslash W$  of  $G$ on the left cosets of $W$ is faithful and oligomorphic, and the corresponding embedding  into a copy of $\S$ is topological. To get $W$,   let $x_1, \ldots, x_k \in \omega$ represent the 1-orbits of $G$. Let $W$ be the pointwise stabiliser of $\{x_1, \ldots, x_k\}$. If $g \in G - \{1\}$ then there is $p \in G$ and $i \le k$ such that $g \cdot (p \cdot x_i) \neq p\cdot x_i$. So $p^{-1} g p \not \in W$, and hence $g \cdot  p W \neq pW$.   So the action is faithful. The rest is routine using (a) and (b) above. 
 
 Now we can apply Claim~\ref{cl:1tr}, recalling that $C(G)$ is finite. 
 \end{proof}

 For any closed subgroup $G = \Aut(M) $ of $\S$  the higher centres  are normal, so their orbit equivalence relations are $G$-invariant. If $G$ is oligomorphic they are   definable in $M$. Hence the progression of higher centres has to stop at a finite stage for each oligomorphic group $G$.  

   \subsection{Conjugacy}
 We show that conjugacy of oligomorphic  groups is smooth.  For a closed subgroup $G$ of $ \S$, let $V_G$ denote the orbit equivalence structure. For each $n$ this structure has a $2n$-ary relation symbol, denoting the orbit relation on $n$-tuples. (One could require here that the tuples have distinct elements.)

 The following fact     holds for non-Archimedean groups in general.

 \begin{fact} \label{fact:conj}Let $G, H$ be   closed subgroups of $\S$. 
 
 \bc $G, H$ are conjugate via $\alpha$ $\LR$ $V_G \cong V_H$ via $\alpha$. \ec
 \end{fact}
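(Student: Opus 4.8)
The plan is to prove both directions by unwinding what it means for a homeomorphism $\alpha \colon \omega \to \omega$ (i.e.\ an element of $\S$) to realize a conjugacy versus an isomorphism of the orbit equivalence structures. Recall that $V_G$ has, for each $n$, a $2n$-ary relation $R_n^G$ where $R_n^G(\bar x, \bar y)$ holds iff $\bar x$ and $\bar y$ lie in the same $G$-orbit on $\omega^n$; likewise for $V_H$. So $V_G \cong V_H$ via $\alpha$ means: for all $n$ and all $\bar x, \bar y \in \omega^n$, $\bar x \sim_G \bar y$ iff $\alpha \bar x \sim_H \alpha \bar y$ (where $\alpha$ acts coordinatewise).

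First I would do the direction ($\Leftarrow$): assume $\alpha$ induces an isomorphism $V_G \cong V_H$ and show $\alpha G \alpha^{-1} = H$. Take $g \in G$. I want to show $\alpha g \alpha^{-1} \in H$. Since $H$ is closed in $\S$, it suffices to show that for every finite $F \subseteq \omega$ there is $h \in H$ agreeing with $\alpha g \alpha^{-1}$ on $F$; then $\alpha g \alpha^{-1}$ is a limit of elements of $H$, hence in $H$. Fix such an $F$, enumerate it as $\bar y = (y_1, \dots, y_n)$, and let $\bar x = \alpha^{-1}\bar y$. Then $g \bar x \sim_G \bar x$ trivially (same orbit, witnessed by $g$ itself), so by the isomorphism hypothesis $\alpha g \bar x \sim_H \alpha \bar x = \bar y$, i.e.\ there is $h \in H$ with $h \bar y = \alpha g \alpha^{-1} \bar y$ coordinatewise; this is exactly agreement on $F$. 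Hence $\alpha g \alpha^{-1} \in H$, so $\alpha G \alpha^{-1} \subseteq H$. Applying the same argument to $\alpha^{-1}$ (which induces $V_H \cong V_G$) gives $\alpha^{-1} H \alpha \subseteq G$, i.e.\ $H \subseteq \alpha G \alpha^{-1}$, so the two are equal.

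For the direction ($\Rightarrow$): assume $\alpha G \alpha^{-1} = H$ and show $\alpha$ gives an isomorphism $V_G \cong V_H$. Fix $n$ and $\bar x, \bar y \in \omega^n$. If $\bar x \sim_G \bar y$, pick $g \in G$ with $g\bar x = \bar y$; then $h := \alpha g \alpha^{-1} \in H$ satisfies $h (\alpha\bar x) = \alpha g \bar x = \alpha \bar y$, so $\alpha\bar x \sim_H \alpha\bar y$. Conversely if $\alpha \bar x \sim_H \alpha\bar y$, pick $h \in H$ with $h\alpha\bar x = \alpha\bar y$; then $g := \alpha^{-1} h \alpha \in \alpha^{-1} H \alpha = G$ sends $\bar x$ to $\bar y$, so $\bar x \sim_G \bar y$. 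Since $\alpha$ is a bijection of $\omega$, it is in particular a bijection of the underlying domain of the structures, and we have just checked it preserves and reflects every relation $R_n$, so it is an isomorphism $V_G \to V_H$.

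The only genuine subtlety is in the ($\Leftarrow$) direction, where one must convert "agreement on arbitrary finite sets with elements of $H$" into genuine membership in $H$; this is precisely where closedness of $H$ in $\S$ (equivalently, $H = \Aut(V_H)$ as a permutation group) is used, and it is the step I would write out most carefully. Everything else is a routine translation between the conjugation action and the orbit relations, using only that $\alpha$ acts on $\omega^n$ coordinatewise and that orbit equivalence is witnessed by a single group element. No oligomorphy or non-Archimedean hypothesis beyond closedness is actually needed here, which is consistent with the remark in the text that the fact holds for non-Archimedean groups in general.
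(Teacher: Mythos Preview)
Your proof is correct, but the $\Leftarrow$ direction takes a different route from the paper's. The paper introduces the \emph{canonical structure} $M_G$ (one $n$-ary predicate for each $n$-orbit of $G$) and uses the standard characterisation that a closed subgroup of $\S$ equals the automorphism group of its canonical structure: given the isomorphism $\alpha\colon V_G\to V_H$, one names the $H$-orbits so that $\alpha$ becomes an isomorphism $M_G\cong M_H$, and then $\alpha^{-1}\Aut(M_H)\alpha=\Aut(M_G)$ gives $\alpha^{-1}H\alpha=G$ in one line. Your argument instead works directly at the level of permutations: you show $\alpha g\alpha^{-1}$ agrees with some element of $H$ on every finite set and invoke closedness of $H$ in the pointwise topology. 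Both arguments use closedness in an essential way, just packaged differently (``closed $=$ automorphism group of its orbit structure'' versus ``closed under pointwise limits''). Your approach is slightly more elementary and self-contained; the paper's is shorter once one accepts the canonical-structure machinery, and it makes more transparent why the same $\alpha$ works for both the conjugacy and the isomorphism.
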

 
 \begin{proof}
 \rapf  Immediate.
 
 \lapf Let $M_G$ be the canonical structure for $G$;  namely there are $k_n \le \omega$ many $n$-ary relation symbols, denoting the $n$-orbits. Let $M_H$ be the structure in the same language where the $n$-equivalence classes of $V_H$ are named so that $\alpha$ is an isomorphism $M_G \cong M_H$. Clearly $G= \Aut(M_G)$ and   $H = \Aut(M_H)$. Further, $\alpha^{-1} \Aut(M_H) \alpha = \Aut(M_G)$.
 \end{proof}

 \begin{prop} Conjugacy of oligomorphic groups is smooth. \end{prop}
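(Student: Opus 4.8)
The plan is to produce a Borel reduction of the conjugacy relation on the standard Borel space of oligomorphic closed subgroups of $\S$ to equality on a Polish space, via the map $G\mapsto \mathrm{Th}(V_G)$ into the space $2^{\mathrm{Sent}(\mathcal L_0)}$ of theories in the fixed countable language $\mathcal L_0=\{E_n:n\ge 1\}$ of orbit structures ($E_n$ being $2n$-ary). The engine is Fact~\ref{fact:conj}: for closed subgroups $G,H$ of $\S$, conjugacy of $G$ and $H$ in $\S$ is equivalent to $V_G\cong V_H$. So $G\mapsto V_G$ already reduces conjugacy of arbitrary closed subgroups to isomorphism of $\mathcal L_0$-structures; what I must add is that on the oligomorphic part this isomorphism relation is smooth, and that the whole assignment is Borel.

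First I would record the two routine Borelness checks. The map $G\mapsto V_G$ is Borel into the Polish space $X_{\mathcal L_0}$ of $\mathcal L_0$-structures with domain $\omega$, since $V_G\models E_n(\bar a,\bar b)$ holds exactly when $G$ meets the clopen set $\{\sigma\in\S:\sigma\bar a=\bar b\}$, a Borel condition on $G$ in the Effros Borel structure; and for each fixed sentence $\sigma$ the set $\{V\in X_{\mathcal L_0}:V\models\sigma\}$ is Borel by the usual induction on $\sigma$. Hence $G\mapsto\mathrm{Th}(V_G)$ is Borel. (The oligomorphic groups themselves form a Borel subset of the space of closed subgroups.)

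The conceptual core is to show that for oligomorphic $G$ the structure $V_G$ is $\aleph_0$-categorical. Every element of $G$ is an automorphism of $V_G$, and each orbit of $\Aut(V_G)$ on $\omega^n$ is a union of $G$-orbits; since $G$ is oligomorphic there are only finitely many of the latter, so $\Aut(V_G)$ has only finitely many orbits on each $\omega^n$, i.e.\ it is oligomorphic. By the group-theoretic form of the Ryll--Nardzewski theorem (see \cite{Cameron:90}), the countable structure $V_G$ is therefore $\aleph_0$-categorical, hence is, up to isomorphism, the unique countable model of $\mathrm{Th}(V_G)$. Thus for oligomorphic $G,H$,
\[ G\text{ conjugate to }H\text{ in }\S\ \LR\ V_G\cong V_H\ \LR\ \mathrm{Th}(V_G)=\mathrm{Th}(V_H),\]
and combined with the Borelness above this exhibits conjugacy of oligomorphic groups as a smooth equivalence relation.

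The step I expect to carry the real weight is the $\aleph_0$-categoricity of $V_G$, together with the (minor but essential) choice to feed the canonical orbit structure $V_G$ — whose language $\mathcal L_0$ carries no arbitrary labelling of the orbits — rather than the fully named canonical structure $M_G$, into the theory map; with $M_G$ one would only get isomorphism "up to renaming relation symbols", which is not obviously smooth. Everything else reduces to Fact~\ref{fact:conj} and standard descriptive-set-theoretic bookkeeping.
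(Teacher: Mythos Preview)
Your proof is correct and takes essentially the same approach as the paper: reduce conjugacy to isomorphism of $V_G$ via Fact~\ref{fact:conj}, then to equality of theories using that $V_G$ is $\omega$-categorical. The only differences are cosmetic: the paper argues Borelness of $G\mapsto V_G$ by first passing to a countable dense subgroup rather than directly via the Effros structure, and it leaves the $\aleph_0$-categoricity of $V_G$ implicit where you spell it out.
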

\begin{proof}   The map $G \to V_G$ is   Borel because we can in a Borel way find a countable dense subgroup of $G$, which of course  has the same orbits.  Now apply Fact~\ref{fact:conj}.  For  countable structures $S$ in  a fixed  language,   mapping   $S$ to its theory $\text{Th}(S)$ is Borel.   Since the theory can be seen as  a real, for $\omega$-categorical structures,  this shows smoothness. 
\end{proof}

For corresponding structures $A,B$ with $\Aut(A) = G,\Aut(B) = H$, conjugacy of $G,H$ via $\alpha$ means that $\alpha( A) $ and $B$ have the same definable subsets. To see this, consider the case that $A$ is the canonical structure for $G$.

We note the following topological variation  of Prop~\ref{prop:Sn_normaliser}. The   notation is  introduced   above.

\begin{prop}  (i) $\Aut(V_G)$ equals  the normaliser $N_G$  of $G$  in $\S$.

(ii) If $G$ is oligomorphic then $N_G /G$ as a topological group is profinite.  \end{prop}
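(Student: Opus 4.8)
The plan is to prove both parts by unwinding the definition of $V_G$ and exploiting the correspondence between closed subgroups of $\S$ and their canonical structures.

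For part (i), the key observation is that $\Aut(V_G) = \{\phi \in \S : \phi \text{ preserves every } G\text{-orbit on every } \omega^n\}$, since $V_G$ has, for each $n$, a $2n$-ary relation naming the orbit equivalence relation on $n$-tuples. First I would show $N_G \subseteq \Aut(V_G)$: if $\phi$ normalises $G$, then for $\bar a, \bar b \in \omega^n$ in the same $G$-orbit, say $g \cdot \bar a = \bar b$, we have $(\phi g \phi^{-1}) \cdot (\phi \bar a) = \phi \bar b$, and $\phi g \phi^{-1} \in G$, so $\phi \bar a$ and $\phi \bar b$ lie in the same $G$-orbit; hence $\phi$ preserves the orbit relations and $\phi \in \Aut(V_G)$. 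Conversely, for $\phi \in \Aut(V_G)$, I would check that $\phi G \phi^{-1} \subseteq G$ using the fact that $G = \Aut(M_G)$ for the canonical structure $M_G$ (as in the proof of Fact~\ref{fact:conj}): since $M_G$ and $V_G$ name exactly the same orbit relations, $\Aut(V_G) = \Aut(M_G)$ has the same underlying set, and conjugating an orbit-preserving permutation by another orbit-preserving permutation preserves orbits, so $\phi G \phi^{-1} = G$; applying the same to $\phi^{-1}$ gives equality, so $\phi \in N_G$. This is essentially the same bookkeeping as in Fact~\ref{fact:conj} with $H = G$ and $\alpha = \phi$, just phrased as normalisation rather than conjugacy.

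For part (ii), assume $G$ is oligomorphic. Then $V_G$ is itself an $\omega$-categorical structure (each $\omega^n$ has finitely many $G$-orbits), so $N_G = \Aut(V_G)$ is an oligomorphic, hence closed, subgroup of $\S$; moreover $G \trianglelefteq N_G$ is a closed normal subgroup with $N_G/G$ a Polish group in the quotient topology. To see $N_G/G$ is profinite, I would argue it is compact and totally disconnected. Total disconnectedness is automatic since it is a non-Archimedean Polish group (quotient of the non-Archimedean $N_G$ by a closed normal subgroup, with a neighbourhood basis of $1$ by open subgroups $U/G$ where $U$ ranges over open subgroups of $N_G$ containing $G$). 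For compactness: an element of $N_G/G$ is determined by the induced permutation of the (finitely many, for each $n$) $G$-orbits on $\omega^n$ — more precisely, $\psi \in N_G$ acts on the finite set of $n$-orbits, and the collection of these actions over all $n$ determines $\psi G$, because if $\psi_1, \psi_2 \in N_G$ induce the same permutation of $n$-orbits for every $n$, then $\psi_2^{-1}\psi_1$ fixes every orbit setwise, hence lies in $\Aut$ of the structure naming all orbits on all tuples pointwise — but that structure is exactly $M_G$ refined, whose automorphism group is $G$ (I need: a permutation fixing every $G$-orbit on every $\omega^n$ setwise lies in... no — fixing setwise is not enough; I want that $\psi_2^{-1}\psi_1 \in G$, which needs that it maps each tuple into its own $G$-orbit, equivalently it is in the closure of $G$, which is $G$). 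So $N_G/G$ embeds into the inverse limit $\varprojlim_n \mathrm{Sym}(\text{$n$-orbits of }G)$ of finite groups, and the embedding is a topological embedding onto a closed subgroup by the usual back-and-forth/compactness argument; a closed subgroup of a profinite group is profinite.

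The main obstacle I anticipate is the precise justification in part (ii) that $N_G/G$ injects into the profinite group $\varprojlim_n \mathrm{Sym}(O_n)$ where $O_n$ is the (finite) set of $G$-orbits on $\omega^n$ — specifically showing that if $\psi \in N_G$ fixes every $G$-orbit setwise then $\psi \in G$. This requires knowing that $G$ is \emph{closed} and that a permutation lying in the topological closure of $G$ that additionally preserves every orbit is already in $G$; more carefully, one shows that for such $\psi$ and any finite $F \subseteq \omega$, there is $g \in G$ with $\psi \uh F = g \uh F$ (by $\omega$-categoricity, the $G$-orbit of the tuple $\psi\uh F$ equals that of $F$ because... this is where I must be careful — it need not: $\psi$ could move the tuple $(0,1)$ to $(1,0)$ if those are in the same $2$-orbit). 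The resolution is that $\psi \in N_G$ means conjugation by $\psi$ is an automorphism of $G$, and combined with $G$ being closed and oligomorphic one gets that $\psi$ restricted to any finite set is realised by an element of $G$ exactly when $\psi$ fixes orbits and, crucially, one passes to the action on tuples from ALL finite configurations — I would make this rigorous by the standard fact that $G = \bigcap_n \{h : h \text{ preserves all } n\text{-orbits of } G\}$ fails in general, so instead I define the inverse limit using the action on orbits of \emph{ordered tuples including the diagonal-free ones} and note $\psi_2^{-1}\psi_1$ preserving all these orbits setwise plus being in $N_G$ forces it into $G$ by a direct approximation argument. I expect this compactness/closedness step to be the only non-routine point; everything else is diagram-chasing already rehearsed in Fact~\ref{fact:conj} and Proposition~\ref{prop:Sn_normaliser}.
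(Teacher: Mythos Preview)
Your overall strategy for both parts matches the paper's proof. Two corrections are needed.

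\textbf{Part (i).} The sentence ``since $M_G$ and $V_G$ name exactly the same orbit relations, $\Aut(V_G) = \Aut(M_G)$'' is false: $M_G$ names each $n$-orbit as a separate predicate, so $\Aut(M_G) = G$, whereas $V_G$ only names the orbit \emph{equivalence relation}, so elements of $\Aut(V_G)$ may permute the $n$-orbits. If $\Aut(V_G)$ really equalled $G$ then $N_G = G$ always, which is clearly wrong. The argument you want is: for $\phi \in \Aut(V_G)$ and $g \in G$, the map $\phi$ sends $n$-orbits to $n$-orbits (possibly permuting them), while $g$ fixes each $n$-orbit setwise; hence $\phi g \phi^{-1}$ fixes each $n$-orbit setwise, i.e.\ $\phi g \phi^{-1} \in \Aut(M_G) = G$. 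This is exactly the ``renumbering'' argument the paper gives.

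\textbf{Part (ii).} Your approach via the map $N_G \to \prod_n S_{k_n}$ (sending $\psi$ to the family of permutations it induces on the finite sets of $n$-orbits) is precisely the paper's. You already found the correct reason the kernel is $G$ and then talked yourself out of it: if $\psi \in N_G$ fixes every $n$-orbit setwise for all $n$, then for every finite tuple $\bar a$ the tuple $\psi(\bar a)$ lies in the $G$-orbit of $\bar a$, so some $g \in G$ agrees with $\psi$ on $\bar a$; hence $\psi$ is in the closure of $G$, which is $G$ since $G$ is closed. Your later worry that ``$G = \bigcap_n \{h : h \text{ preserves all } n\text{-orbits of } G\}$ fails in general'' is unfounded for closed $G$ --- that intersection is exactly $\overline{G}$. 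Once the kernel is identified, continuity of $\Theta$ gives a continuous injection of $N_G/G$ into the compact group $\prod_n S_{k_n}$; its image is closed (a continuous bijection from a Polish group onto a Polish group is a homeomorphism, or just use compactness of the target), and a closed subgroup of a profinite group is profinite. No further back-and-forth is needed.
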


\begin{proof}
\n (i) 
\n $\sub:$  Let $\alpha \in \Aut(V_G)$, $\beta \in G$. Clearly $\alpha$ maps $n$-orbits to $n$-orbits, so  $\alpha(M_G)$ is a renumbering of the named $n$-orbits in $M_G$. Therefore $\beta^\alpha \in \Aut(M_G) =G$. 

\n $ \supseteq:$  Let $V$ be an $n$-orbit, and let $\alpha \in  N_G$.  If $r, s \in \NN^n$ and $r,s \in \alpha(V)$, choose $\beta \in G$ such that $\alpha \beta \alpha^{-1} (r) =s$. So $\alpha(V) $ is contained in an $n$-orbit $W$. By a similar argument, $\alpha^{-1}(W)$ is contained in an $n$-orbit. Therefore $\alpha(V) = W$ is an $n$-orbit.  Hence $\alpha \in \Aut(V_G)$.

\n (ii). Let $k_n$ be the number of $n$-orbits of $G$.  Define a continuous homomorphism $\Theta: \Aut(V_G) \to \prod_n S_{k_n}$ by $\Theta(\alpha)= f$ if  $f(n)$ is the finite permutation describing the way $\alpha$ permutes $n$-orbits (numbered in some way). Clearly $G$ equals  the kernel of $\Theta$, and $\Theta/G$ is therefore a topological embedding. Since the range is compact, its inverse is also continuous. 
A closed subgroup of a profinite group is again profinite.
\end{proof} 

The converse of (ii) may fail: $N_G/G$ can be profinite, and even trivial, without $G$ being oligomorphic. For instance, there is a countable maximal-closed subgroup of $\S$, e.g.\ $AGL_n(\mathbb Q)$, the automorphism group of the structure $\mathbb Q^n$, $n \ge 2$ with the ternary function $f(x,y,z) = x+y-z$ (Kaplan and Simon). This structure is not $\omega$-categorical. 

 We don't know at present whether every profinite group occurs that way.

\subsection{$\omega$-categorical  structures with essentially finite language} 
One says that a structure $M$ has essentially finite language if $M$ is interdefinable with a structure $\wt M$ over a language in a finite signature.   (Interdefinable means same domain and same definable relations.). We present a basic fact that  can be used to obtain an oligomorphic group that is not isomorphic to   the automorphism group of such an $\omega$-categorical structure.

\begin{lemma} The following are equivalent  for a countable structure $M$.
\bi
\item[(i)] $M$ is interdefinable with a structure $N$ in  finite language with maximum arity $k$, and quantifier elimination.

\item[(ii)] $M$ is $\omega$-categorical, and for each  $n \ge k$, each $n$-orbit of $M$ is given by its projections to $k$-orbits. 
\ei
\end{lemma}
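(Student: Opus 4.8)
The plan is to pass through the Ryll--Nardzewski theorem in both directions: for a countable $\omega$-categorical structure it identifies the $n$-orbits of the automorphism group with the complete $n$-types, each of which is then $\emptyset$-definable, and it tells us that a countable structure is $\omega$-categorical iff its automorphism group is oligomorphic. I read ``projection to $k$-orbits'' as the map sending an $n$-tuple $\bar a$ with $n\ge k$ to the family $\bigl(\,k\text{-orbit of }\bar a\uh s\,\bigr)_s$, where $s$ ranges over increasing $k$-element index sequences, and condition (ii) as saying this map is injective on $n$-orbits. I also take $N$ relational, replacing a function symbol by the relation symbol for its graph (this raises the arity by one).

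First I would prove $(i)\Rightarrow(ii)$. Interdefinability gives $\Aut(M)=\Aut(N)$, so $M$ and $N$ have the same orbits on each power. Since $N$ has a finite relational language and quantifier elimination, there are only finitely many complete $n$-types for each $n$ --- a complete type is pinned down by the finitely many atomic formulas it decides --- so $N$, and hence $M$, is $\omega$-categorical. Now fix $n\ge k$ and $n$-tuples $\bar a,\bar b$ with the same $k$-orbit projections. Each interpreted relation of $N$ is invariant under $\Aut(N)=\Aut(M)$, hence a union of orbits of arity $\le k$; so to obtain that $\bar a$ and $\bar b$ have the same complete type --- and then, by Ryll--Nardzewski, the same orbit --- it is enough to check that for every index sequence $s$ of length $\le k$ the subtuples $\bar a\uh s$ and $\bar b\uh s$ lie in the same orbit. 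When $n\ge k$, any such $s$ can be padded to a $k$-element sequence $s'$, and the orbit of $\bar a\uh s$ is the projection to the coordinates of $s$ of the $k$-orbit of $\bar a\uh s'$; the hypothesis then delivers what we need, with the case of index sequences having repetitions reduced to shorter repetition-free ones plus the equality pattern.

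For $(ii)\Rightarrow(i)$, let $N$ be the structure on the domain of $M$ equipped, for every $m\le k$ and every $m$-orbit $O$ of $\Aut(M)$, with an $m$-ary relation symbol $R_O$ interpreted as $O$; since $M$ is $\omega$-categorical there are finitely many such symbols, so the language is finite with maximum arity $k$. I would then verify: \emph{(a)} $\Aut(N)=\Aut(M)$ --- the inclusion $\supseteq$ is clear, and if $g\in\Aut(N)$ then $g$ preserves each $R_O$, so for $n\ge k$ the tuples $\bar a$ and $g\bar a$ have the same $k$-orbit projections, hence the same $n$-orbit by (ii) (for $n<k$ this is immediate), so $g$ fixes every orbit of $M$ setwise and therefore preserves every relation of $M$, each being a union of orbits; \emph{(b)} interdefinability --- $N$ has the oligomorphic automorphism group $\Aut(M)$, hence is $\omega$-categorical, and for $\omega$-categorical structures on $\omega$ the $\emptyset$-definable relations are precisely the automorphism-invariant ones, so $M$ and $N$ define the same relations; \emph{(c)} quantifier elimination for $N$ --- every formula defines an $\Aut(N)$-invariant relation, i.e.\ a finite union of orbits, so it suffices that each $n$-orbit $O$ be quantifier-free definable, which for $n\le k$ is witnessed by $R_O$ and for $n>k$ by the finite conjunction, supplied by (ii), of the atomic formulas asserting $\bar a\uh s\in O_s$ for each $k$-element $s$ (with $O_s$ the corresponding $k$-orbit).

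The step I expect to be the main obstacle is less any one computation than the handling of the arity bound on both sides: in $(i)\Rightarrow(ii)$, correctly padding ``$\le k$-subtuples'' up to ``$k$-subtuples'' (and disposing of repeated coordinates) so that the stated projection hypothesis is exactly what is produced; and in $(ii)\Rightarrow(i)$, making sure one genuinely gets interdefinability and not merely equality of automorphism groups --- which is why the remark that the candidate $N$ is itself $\omega$-categorical, in step (b), must be made explicitly rather than glossed over.
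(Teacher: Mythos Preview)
Your proof is correct and follows the same approach as the paper: for (i)$\Rightarrow$(ii) both use that quantifier elimination reduces an $n$-type to the atomic formulas in at most $k$ of the variables, and for (ii)$\Rightarrow$(i) both take $N$ to be the structure naming orbits. Your write-up is in fact more careful than the paper's sketch: the paper says only ``let $N$ be the orbit structure'' with ``an $n$-ary predicate for each $n$-orbit'', whereas you explicitly restrict to orbits of arity $\le k$ (necessary for the language to be finite with maximum arity $k$) and you verify interdefinability and quantifier elimination rather than leaving them implicit.
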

\begin{proof} (ii) implies  (i): Let  $G = \Aut (M)$, and let $N$ be the orbit structure of $M$. Thus,  $N$ is like     $V_G$ above but has an  $n$-ary predicate for each $n$-orbit. (Note that $N$ is a Fraisse limit. $V_G$ is a reduct of $N$, and as in the  finite case above its automorphism group  is the normaliser of $G$.)

(i) implies  (ii): Clearly $N$ is $\omega$-categorical, as there are only finitely many $n$ types for each $n$. 
Each formula $\phi$  in $n\ge k$ variables is a Boolean combination of q-free formulas in $\le k$ variables. If $\phi$ describes an $n$-orbit we can assume it is a conjunction of such formulas. A formula in $k' \le k$ variables describes a finite union of $k'$-orbits. Hence the $n$-orbit is given by its projections: if two $n$ tuples have are in the same projection orbits then both or none satisfy $\phi$.

\end{proof}

      \section{Kassabov and Nies: supershort first order descriptions in  certain classes of finite groups}

 Nies and Tent~\cite{Nies.Tent:17} showed that every  finite simple group $G$ has a first-order description (in the usual language of group theory) of length $O(\log (|G|)$. This result is near optimal for the whole class of finite simple groups  because of the cyclic groups, using   a counting argument together with the prime number theorem.  We show that shorter descriptions can be obtained for  certain natural classes of finite simple groups.  This works for instance when the groups $G$ in the class have presentations of length  $O(\log(|G|)) $  and the diameter of the corresponding Cayley graph is also $O(\log)$. For instance, by this method  the alternating groups  $G$  can be described in length  $O( \log \log |G|)$.  
  
  The following definition  is    from Nies and Tent~\cite{Nies.Tent:17}. 

  \begin{definition} \label{def:compress} Let  $r \colon \NN \to \NN^+$ be  an unbounded      function.  We say  that an infinite  class $\+ C$ of finite $L$-structures   is \emph{$r$-compressible} if for each      structure $G$  in $\+ C$,  there is   a  sentence $\phi$  in $L$  such that  $|\phi| = O(r (|G|))$ and $\phi$ describes~$G$.   \end{definition}

For notational convenience,   we will use the definition \[\log m = \min \{r \colon \, 2^r \ge m\} .\]

  \begin{theorem}[\cite{Nies.Tent:17}, Thm.\ 1.2]\label{t:simple}  The class of finite simple groups is $\log$-compressible. \end{theorem}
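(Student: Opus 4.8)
The plan is to produce, for each finite simple group $G$, a sentence of the form
\[
\phi_G \;=\; \exists a_1\cdots\exists a_d\;\big[\,\mathrm{REL}(\bar a)\wedge\mathrm{GEN}(\bar a)\wedge\exists x\,(x\neq 1)\,\big],
\]
with $d=O(1)$, where $\mathrm{REL}(\bar a)$ asserts that a fixed finite set $R$ of defining relations for $G$ holds of the tuple $\bar a$, and $\mathrm{GEN}(\bar a)$ asserts that $\bar a$ generates the whole group. The design principle is that if a group $H$ satisfies $\phi_G$ then, by $\mathrm{GEN}$, $H$ is generated by a tuple satisfying the defining relations of $G$, so $H$ is a quotient of $G$; since $G$ is simple and $\exists x\,(x\neq 1)$ forbids $H=1$, we get $H\cong G$. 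Conversely $G\models\phi_G$ with $\bar a$ the chosen generators. So everything reduces to making the three conjuncts have total length $O(\log|G|)$.

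Two compression devices do the work. The first is the \emph{doubling trick}: a relation $w(\bar a)=b$ given by a word $w$ of bit-length $n$ — the exponents in any powers $a_i^{k}$ being counted in binary — can be rewritten as a first-order formula of length $O(n)$, using $O(n)$ nested auxiliary variables that record successively squared partial products. The second: if the Cayley graph $\mathrm{Cay}(G,\bar a)$ has diameter $D$, then ``$x$ lies in the subgroup generated by $\bar a$'' is expressed by $D$ nested quantifiers with a constant-size disjunction at each level, a formula of length $O(dD)$; this realises $\mathrm{GEN}(\bar a)$ and in particular forces any model to have size at most $(2d+1)^D$, hence to be finite.

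I would first dispose of the cyclic groups $G=\ZZ/p\ZZ$ directly: take $\bar a=(a)$, let $\mathrm{REL}$ be $a^p=1$ (length $O(\log p)$ by the doubling trick) and let $\mathrm{GEN}$ say that every $x$ equals $a^k$ for some $k$ with at most $\lceil\log p\rceil$ binary digits (again length $O(\log p)$ by the same trick); a model is then forced to be cyclic of order dividing $p$ and nontrivial, hence $\cong\ZZ/p\ZZ$, and $|\phi_G|=O(\log|G|)$. For the non-abelian simple groups I would invoke the classification: for each family one chooses a bounded generating tuple $\bar a$ such that (i) $G=\langle\bar a\mid R\rangle$ for a set $R$ of $O(1)$ relators, each of bit-length $O(\log|G|)$ — the short presentations of Guralnick--Kantor--Kassabov--Lubotzky, on root elements for the Lie type families and on the explicit bounded generating sets of Kassabov for the alternating groups — and (ii) $\mathrm{Cay}(G,\bar a)$ has diameter $O(\log|G|)$, which for the same generators follows from the $BN$-pair structure in the Lie type case and from the expansion results of Kassabov and of Kassabov--Lubotzky--Nikolov for the alternating and the remaining cases. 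The doubling trick then turns $R$ into $\mathrm{REL}(\bar a)$ of length $O(\log|G|)$ and the diameter bound turns $\langle\bar a\rangle=H$ into $\mathrm{GEN}(\bar a)$ of length $O(\log|G|)$, completing $\phi_G$.

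The main obstacle is exactly step (ii) together with the compatibility demand above: short-presentation generators and logarithmic-diameter generators are produced in the literature by different, family-specific choices, and a naive change of generators inside the sentence risks inflating relator bit-lengths from $O(\log|G|)$ to $O(\log^2|G|)$ (substituting length-$O(\log|G|)$ words into $O(\log|G|)$-many syllables). One must therefore go through the classification family by family and exhibit a \emph{single} bounded tuple serving both purposes; this is where the bulk of the work lies. A secondary obstacle is that a short presentation in the required sense is not known for the Ree groups ${}^2G_2(q)$, so these need an ad hoc description — for instance realising ${}^2G_2(q)$ as the fixed-point subgroup of an explicitly writable automorphism of the short-describable group $G_2(q)$, or exploiting its $2$-transitive action of degree $q^3+1$.
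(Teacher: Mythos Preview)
This theorem is quoted from \cite{Nies.Tent:17} and is not proved in the present paper; it is stated as background before the paper turns to sharper $\log\log$ bounds for particular subclasses. There is therefore no proof here to compare your attempt against directly.

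That said, it is worth contrasting your sketch with what the paper does contain and with what it reveals about the original argument. The two devices you describe --- the doubling trick for generation and the schema ``short presentation plus small Cayley diameter implies short description'' --- are exactly Lemma~\ref{generation} and Lemma~\ref{lem:short_pres} here, which the paper applies only to the subclasses $A_k$, $S_k$, and $\mathrm{PSL}_n(q)$ for fixed $q$. The original Nies--Tent proof of the full theorem, however, does not run through presentations and diameters for the Lie-type families: as the paper remarks later (``the biinterpretability method described in \cite[Section~5]{Nies.Tent:17}''), it first writes down $O(\log q)$ first-order descriptions of the fields $\mathrm{GF}(q)$ and then transfers these to the simple groups via uniform bi-interpretations between each group of Lie type and its underlying field. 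This sidesteps precisely the compatibility obstacle you flag --- reconciling short-presentation generators with small-diameter generators across all families --- and also covers the Ree groups, for which GKKL short presentations are unavailable. Your route is plausible in outline and you have correctly located where the real work would lie, but the bi-interpretability method avoids that case analysis altogether.
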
  

The  first-order formulas for generation developped   in \cite{Nies.Tent:17} will be used  in the context of presentations with  Cayley graphs of small diameter.
\begin{lemma}[\cite{Nies.Tent:17}, proof of Lemma 2.4]  \label{generation}
For each positive integers $k,v$, there exists a first-order formula $\delta_{v,k} (g;x_1,\ldots,x_k)$  of length $O(k+  v)$  in the language of groups  such that   for each group $G$,  
  ${G \models \delta_{k,v} (g;x_1,\ldots,x_k)} \LR {g = w( x_1,\ldots,x_k) }$   for some   word $w$ in $F(x_1,\ldots, x_n)$ of length at most $2^v$. 
\end{lemma}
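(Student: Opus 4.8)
The plan is to build the formula $\delta_{v,k}$ by iterating a "one-step product" formula $v$ times, so that after $j$ steps the defined set is exactly the ball of radius $2^j$ in the word metric on $\langle x_1,\ldots,x_k\rangle$. The base case ($v=0$) is a disjunction: $g = 1 \lor \bigvee_{i\le k}(g = x_i \lor g = x_i^{-1})$, which has length $O(k)$ and expresses "$g$ is a word of length at most $2^0 = 1$ in the generators (allowing the empty word)." The inductive step is the standard squaring trick: a word of length at most $2^{j+1}$ is a product of two words each of length at most $2^j$. Naively writing $\exists a\,\exists b\,(\text{ball}_j(a)\land\text{ball}_j(b)\land g=ab)$ doubles the formula at each stage and gives length $2^{O(v)}$, which is far too long; the point of Lemma~2.4 of \cite{Nies.Tent:17} is that one reuses a \emph{single} copy of the subformula.

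The key device is to quantify over a fresh variable $y$ that is required to equal \emph{both} $a$ and $b$ but at different "times," or more precisely to use the trick
\[
\ex{a}\ex{b}\Big[\,g = a b \;\land\; \fao{y}\big( (y = a \lor y = b) \ria \text{ball}_j(y)\big)\Big],
\]
so that $\text{ball}_j$ occurs only once. If $\ell_j$ is the length of the formula defining $\text{ball}_j$, then $\ell_{j+1} = \ell_j + O(1)$, hence $\ell_v = \ell_0 + O(v) = O(k + v)$, exactly as claimed. One checks by induction on $j$ that $G\models \text{ball}_j(g)$ iff $g = w(x_1,\ldots,x_k)$ for some word $w$ of length at most $2^j$: the forward direction peels off the witnesses $a,b$ and concatenates the words guaranteed by the induction hypothesis; the backward direction splits a given word of length $\le 2^{j+1}$ into two halves of length $\le 2^j$ and supplies them as $a,b$. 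Finally one sets $\delta_{v,k}(g;x_1,\ldots,x_k) := \text{ball}_v(g)$.

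I expect the main obstacle to be purely bookkeeping rather than conceptual: verifying that the "reuse a single subformula" encoding is actually first-order in the language of groups (no auxiliary sorts, no second-order quantifiers) and that the bound $O(1)$ per step is honest — one must be careful that the $\fao{y}(\ldots)$ clause genuinely lets a single syntactic occurrence of $\text{ball}_j$ do the work of both bounded-length factors, and that the equality tests $y=a$, $y=b$, and $g=ab$ together contribute only a constant to the length. Since this is explicitly extracted from the proof of \cite[Lemma 2.4]{Nies.Tent:17}, I would cite that construction and simply record that the same formulas, read off directly, have the stated length $O(k+v)$ and the stated defining property; no new idea is needed beyond noting that we only want the "generated by a short word" predicate, not the full generation statement proved there.
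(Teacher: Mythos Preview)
Your proposal is correct and matches the paper's proof essentially verbatim: the paper defines $\delta_{0,k}$ as your disjunction over $g=1$, $g=x_j$, $g=x_j^{-1}$, and for $i>0$ sets $\delta_{i,k}(g;\bar x)\equiv \exists u_i\exists v_i\,[g=u_iv_i \land \forall w_i\,((w_i=u_i\lor w_i=v_i)\to \delta_{i-1,k}(w_i;\bar x))]$, precisely your squaring-with-single-subformula trick. The length recursion $\ell_{i}=\ell_{i-1}+O(1)$ and the inductive verification of the defining property are exactly as you describe.
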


\begin{proof}
 Let
\[\delta_{0,k}(g;x_1,\ldots,x_k) \equiv \bigvee_{1 \le j \le k}[g=x_j \  \vee g = x_j^{-1} \  \vee \ g=1].\] 
For $i>0$ let
\[
\begin{split}
\delta_{i,k}(g;x_1,\ldots,x_k) \equiv \exists u_i \exists v_i [&g = u_i v_i \ \wedge\\
&\forall w_i [(w_i = u_i \vee w_i = v_i) \rightarrow \delta_{i-1,k}(w_i;x_1,\ldots,x_k)]].
\end{split}
\]
Then  $\delta_{i,k}$ has length $O(k+i)$, and $G \models \delta_{i, k} (g;x_1,\ldots,x_k)$ if and only if $g$ can be written as a product, of length at   most $2^i$,   of $x_r$'s and their inverses.
\end{proof}

  \begin{lemma} \label{lem:short_pres} Suppose that a finite simple group $G$ has a presentation 
\bc   $ \la x_1, \ldots, x_k \mid r_1, \ldots , r_m \ra$ of length $\ell$. \ec Also suppose that the diameter of the  Cayley graph is bounded by $2^v$, that is,  each $g\in G$ has the form $ w( x_1,\ldots,x_k) $ for some  free group word of length at most $2^v$.  

 There is a sentence  $\psi$  of length $ O(v + \ell)$ describing the structure  $\la G, \ol g \ra$.  \end{lemma}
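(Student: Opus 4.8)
The plan is to write down explicitly a first-order sentence $\psi$ asserting, in effect, ``there exist elements $g_1,\dots,g_k$ such that the relators $r_1,\dots,r_m$ hold, every element of the group is a word of length $\le 2^v$ in the $g_i^{\pm 1}$, and $\bar g$ is the particular word naming the distinguished element''. The point is that a presentation together with a Cayley-graph diameter bound gives a \emph{finite} axiomatization: a group satisfying the relators and the covering condition is a quotient of $G$, but since $G$ is simple any proper quotient is trivial, and the covering condition rules out the trivial quotient once we also assert that the generators are not all equal to $1$ (or, more simply, that $\bar g\neq 1$ if $\bar g\neq 1$ in $G$, and otherwise that the group has more than one element). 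So the sentence pins down $G$ up to isomorphism, with $\bar g$ in the right place.

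First I would fix the generators by an existential block $\exists x_1\cdots\exists x_k$. Inside, I conjoin: (1) each relator equation $r_j(x_1,\dots,x_k)=1$; each $r_j$ is a word, and written naively $\sum_j |r_j|$ contributes $O(\ell)$ to the length, but one should instead build each $r_j$ using the $\delta_{i,k}$ trick (Lemma~\ref{generation}) or a straight-line program so that the total is $O(\ell)$ rather than quadratic — since the presentation has length $\ell$, the relators collectively have length $O(\ell)$ and each can be expressed with $O(\log|r_j| + k)$ symbols via nested products, summing to $O(\ell)$. (2) The covering condition $\forall y\, \delta_{v,k}(y;x_1,\dots,x_k)$, of length $O(v+k)=O(v+\ell)$ by Lemma~\ref{generation}. (3) A formula asserting $\bar g = w(x_1,\dots,x_k)$ where $w$ is the chosen word of length $\le 2^v$ naming the distinguished element; using the $\delta$-style nested-product encoding this is $O(v)$ symbols (and if the distinguished element requires naming several constants $\bar g = (g^{(1)},\dots)$, one does this once per constant, absorbing the count into the $O$). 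Finally, outside the existential block, if $\bar g \neq 1$ one adds the clause $\bar g \neq 1$; this is what excludes the trivial group. Altogether $|\psi| = O(v+\ell)$.

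For correctness: any model $H\models\psi$ has generators $x_i$ satisfying the relators, so the map $F(x_1,\dots,x_k)\to H$ factors through $G$, giving a surjection $G\twoheadrightarrow H$ (surjectivity from clause (2)). Since $G$ is simple, $H\cong G$ or $H=\{1\}$; the clause $\bar g\neq 1$ (or the fact that $G$ has $\ge 2$ elements, expressible by noting some generator is nontrivial, which follows once the distinguished tuple is nontrivial — or one simply includes $\exists z\, z\neq 1$) rules out the trivial group, so $H\cong G$, and clause (3) places the distinguished constant correctly, so $\langle H,\bar g\rangle\cong\langle G,\bar g\rangle$. Conversely $\langle G,\bar g\rangle\models\psi$ by construction.

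I expect the main obstacle to be purely bookkeeping: ensuring that every word that appears (the relators $r_j$ and the naming word $w$) is encoded so that the \emph{formula length}, not just the word length, stays linear — a relator of length $L$ written as a literal product $a_1a_2\cdots a_L$ is fine (length $O(L)$), but one must be careful that substituting the variable names and the equality/conjunction apparatus does not blow this up, and that the ``diameter $\le 2^v$'' quantifier is handled by the logarithmic-depth $\delta_{v,k}$ of Lemma~\ref{generation} rather than by a literal disjunction over all $\le 2^v$ words (which would be exponentially long). Once one commits to the $\delta_{i,k}$ encoding throughout, the length bound $O(v+\ell)$ falls out; the group-theoretic content (simplicity forces the presentation-plus-covering axioms to be categorical) is immediate.
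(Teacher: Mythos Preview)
Your argument is correct, but it is more elaborate than what the paper does, because you have misread what $\bar g$ denotes. In the paper, $\bar g$ \emph{is} the tuple of generators $(x_1,\dots,x_k)$: the sentence is obtained by replacing the $x_i$ by constant symbols, so there is no existential block, and your clause~(3) naming a separate distinguished element disappears entirely. The paper's sentence is simply
\[
x_1 \neq 1 \ \land\ \bigwedge_{1\le j\le m} r_j = 1 \ \land\ \forall y\,\delta_{v,k}(y;x_1,\dots,x_k),
\]
and the correctness argument is exactly your ``models are nontrivial quotients of $G$, hence $G$ itself by simplicity''. Your version, with an existential block over generators and a separate word naming $\bar g$, proves the slightly more general statement where the distinguished tuple is arbitrary; this is not needed for the applications, but it is not wrong.

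Two minor points of over-engineering: writing each relator $r_j$ literally as a product already has length $O(|r_j|)$, so the naive encoding of $\bigwedge_j r_j = 1$ is $O(\ell)$ and no $\delta$-style compression is needed there; and excluding the trivial group is done in the paper by the clause $x_1\neq 1$, which is available precisely because the generators are the constants.
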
 
 \begin{proof}     Let $\psi$  be  the formula
\bc   $ x_1 \neq 1 \lland \bigwedge_{1 \le i \le m}  r_i = 1 \lland    \fa y \, \delta_{k,v} (y; x_1, \ldots, x_k).$ \ec
  Replacing the $x_1,\ldots, x_k$ by new constant symbols, the models of the sentence thus obtained are the  nontrivial quotients of $G$. Since $G$ is simple, this sentence describes~$\la G, \ol g \ra$. \end{proof}

\begin{lemma} \label{lem:Ak} Suppose $S$ is a generating set of $A_k$ containing a 3-cycle, say $(1,2,3)$. Then the Cayley graph of $A_k$ with respect to $S$  has diameter $O(k^4)$. \end{lemma}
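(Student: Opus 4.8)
The plan is to reduce a general even permutation of $A_k$ to the identity using only $O(k^4)$ multiplications by elements of $S$, by a two-phase argument: first show that from the single 3-cycle $(1,2,3)$ together with the generating set $S$ one can manufacture, in a bounded number of $S$-steps, every 3-cycle of the form $(1,2,j)$; then use these to sort an arbitrary permutation. For the first phase, fix $j$ and use that $S$ generates $A_k$ to write some element $g_j \in A_k$ (of $S$-length at most the diameter we are trying to bound — so this looks circular) moving $3 \mapsto j$; the standard fix is to instead observe that $S$ generates the whole group, hence for each pair of points there is \emph{some} word, and a crude bound on the diameter of the Cayley graph of $A_k$ with respect to \emph{any} generating set is already known to be polynomial — but to get the clean $O(k^4)$ we should argue directly. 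Concretely: using $(1,2,3)$ and powers/conjugates we get $(1,3,2)$, and combined with an arbitrary $\sigma \in S$ we get $\sigma (1,2,3) \sigma^{-1} = (\sigma 1, \sigma 2, \sigma 3)$ in $3$ steps; iterating, a product of $t$ generators conjugating $(1,2,3)$ costs $2t+1$ steps and yields a 3-cycle on an essentially arbitrary triple, once $t$ is large enough that the $S$-orbit has spread out.

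The key steps, in order, would be: (1) show that there is an absolute constant so that every 3-cycle $(a,b,c)$ can be written as an $S$-word of length $O(k)$ — this uses that $A_k$ acts (via $S$) with the 3-cycle $(1,2,3)$ available, and that conjugating a 3-cycle by a short $S$-word that realizes the needed point-moves costs $O(k)$; the $O(k)$ for the point-moves is the usual "it takes at most $k-1$ transposition-like moves to route a point", made even by pairing moves up. (2) Given an arbitrary $\pi \in A_k$, sort it: find the point mapped to position $1$, bring it home with one 3-cycle, then position $2$, and so on; this is $O(k)$ many 3-cycles. (3) Multiply the costs: $O(k)$ 3-cycles each of $S$-cost $O(k)$ would give $O(k^2)$ — so to actually land on $O(k^4)$ the intended argument must be using a weaker, more robust sub-routine, presumably: the only 3-cycle we are \emph{handed} is $(1,2,3)$, and to move it to $(a,b,c)$ we first need to route three points simultaneously, which via the $S$-generated action costs up to $O(k^2)$ ($O(k)$ per point, three points, but they interfere so one re-routes), making each 3-cycle cost $O(k^2)$ and the total $O(k) \cdot O(k^2) = O(k^3)$; the extra factor of $k$ presumably comes from the fact that producing even the point-routing words from $S$ alone (not from transpositions) costs another $O(k)$ each. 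I would reconcile the exact exponent by being generous at each stage rather than optimizing.

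The main obstacle is genuinely the bookkeeping that avoids circularity: we cannot invoke "the diameter" while bounding it, so every time the argument wants "a short word doing $X$" it must exhibit one explicitly from $S \cup \{(1,2,3)\}$, and the only safe primitive is "conjugate $(1,2,3)$ by a power of something in $S$" plus "$S$ generates, so some orbit-connecting word exists". Turning "exists" into "exists of length $O(k)$" for point-routing is the crux; I expect this to follow from the observation that the permutation group generated by $S$ is all of $A_k$, hence acts $2$-transitively (indeed $(k-2)$-transitively), so one can route any point to any target with a bounded number of applications of generators \emph{together with} the 3-cycle used to "unstick" configurations — and a careful count of that gives the $O(k^4)$, with the four factors of $k$ coming from: $O(k)$ 3-cycles to sort, times $O(k)$ point-moves per 3-cycle, times $O(k)$ $S$-applications per point-move, times $O(k)$ slack in re-synchronizing after each move. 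I would present step (1) as the technical heart and steps (2)–(3) as routine.
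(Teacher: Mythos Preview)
Your proposal has a genuine gap: the point-routing argument in your ``key step (1)'' is circular, as you yourself notice, and you never actually escape the circularity. You want to claim that any single point can be routed to any target using $O(k)$ applications of generators from $S$, but for an \emph{arbitrary} generating set $S$ of $A_k$ there is no reason this should hold without already knowing a diameter bound. Your final accounting (``four factors of $k$: sorting, point-moves, $S$-applications per move, slack'') is not an argument --- the third factor is simply asserted.

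The paper's proof sidesteps this entirely with one observation you brushed past. You wrote that conjugating $(1,2,3)$ by a word of length $t$ in $S$ costs $2t+1$ and yields a 3-cycle on an arbitrary triple ``once $t$ is large enough that the $S$-orbit has spread out'' --- and then dropped this line. The point is precisely to quantify that $t$. The action of $A_k$ by conjugation on the set of all 3-cycles is transitive, and that set has size $O(k^3)$. Hence the Schreier graph of this action with respect to $S$ is connected on $O(k^3)$ vertices, so its diameter is at most $O(k^3)$; this is the trivial bound ``diameter $\le$ number of vertices'' and requires no knowledge of the Cayley-graph diameter. Thus every 3-cycle is a word of length $O(k^3)$ in $S$, and since any even permutation is a product of at most $k$ 3-cycles, the diameter is $O(k^4)$. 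No point-routing, no re-synchronizing, no slack.
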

\begin{proof} $A_k$ acts $1$-transitively on the set of $3$-cycles on $\{1, \ldots , k\}$  by conjugation.  Since the number of $3$-cycles is $O(k^3)$,  each  $3$-cycle can be expressed by a word of length $O(k^3)$ in the generating set.  Any    even permutation can be written as a product of at most $k$ 3-cycles.   
\end{proof}

\begin{prop} The classes  of finite alternating/symmetric groups and of finite symmetric groups are  
$\log \circ \log$-compressible. \end{prop}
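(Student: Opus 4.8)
The plan is to feed short presentations of $A_n$ and $S_n$ into Lemmas~\ref{lem:short_pres}, \ref{lem:Ak} and~\ref{generation}. Since $|A_n| = n!/2$ and $|S_n| = n!$ we have $\log|A_n|, \log|S_n| = \Theta(n\log n)$, and hence $\log\log|A_n| = \log\log|S_n| = \Theta(\log n)$; so it suffices to produce, for each $n$, a first-order sentence of length $O(\log n)$ describing $A_n$ and one describing $S_n$. The finitely many groups $A_n, S_n$ with $n \le 4$ are each described by a sentence of bounded length, so we may assume $n \ge 5$.

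First I would invoke the theorem of Guralnick, Kantor, Kassabov and Lubotzky that $A_n$ has a presentation $\langle x_1,\dots,x_k \mid r_1,\dots,r_m\rangle$ with $k,m = O(1)$ and of total length $O(\log n)$; here relators of the form $w^N$ are counted with only $O(\log N)$ symbols for the exponent, and, using the iterated-squaring device of Lemma~\ref{generation}, each equation $r_i = 1$ translates into a first-order formula of length $O(\log n)$. Refining if necessary the classical $2$-generator presentation of $A_n$, whose generators are a $3$-cycle and a long cycle, we may assume that one of the generators $c := x_1$ is a $3$-cycle in the natural permutation representation. For $S_n$ one obtains an equally short presentation by adjoining a further generator $t$ with the relations $t^2 = 1$ and $t x_i t^{-1} = w_i(x_1,\dots,x_k)$, $1 \le i \le k$, where the $w_i$ are the bounded-length words describing conjugation by a transposition on the chosen generators of $A_n$; this presentation again has length $O(\log n)$ and still has the $3$-cycle $c$ among its generators.

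Next, since the generating set contains the $3$-cycle $c$, Lemma~\ref{lem:Ak} bounds the diameter of the Cayley graph of $A_n$ with respect to these generators by $O(n^4)$; the same bound holds for $S_n$, whose generating set consists of those of $A_n$ together with $t$, so that every permutation is an element of $A_n$ times at most one factor $t$. Hence every element of $A_n$ (resp.\ $S_n$) is a word of length at most $2^v$ in the generators, with $v = O(\log n)$. For $A_n$, which is simple for $n \ge 5$, Lemma~\ref{lem:short_pres} now provides a sentence of length $O(v + \ell) = O(\log n)$ describing $\langle A_n, x_1,\dots,x_k\rangle$; replacing the $O(1)$ constants by existentially quantified variables yields a sentence of length $O(\log n) = O(\log\log|A_n|)$ describing $A_n$ up to isomorphism.

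For $S_n$ one follows the proof of Lemma~\ref{lem:short_pres}, adapting it to the fact that $S_n$ is not simple: for $n \ge 5$ the only normal subgroups of $S_n$ are $1$, $A_n$ and $S_n$, so every proper quotient of $S_n$ is trivial or $C_2 = S_n/A_n$, and in either the image of the even element $c$ is trivial. Thus the sentence asserting $c \neq 1$, that the generators satisfy all the relators, and that $\forall y\,\delta_{k+1,v}(y; x_1,\dots,x_k,t)$ --- of total length $O(v+\ell) = O(\log n)$ --- forces the generated group to be a quotient of $S_n$ that is neither trivial nor $C_2$, hence all of $S_n$; it therefore describes $\langle S_n, x_1,\dots,x_k,t\rangle$, and existential quantification gives a sentence of length $O(\log\log|S_n|)$ describing $S_n$. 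The main obstacle is the first ingredient --- the existence of presentations of $A_n$ (and hence $S_n$) of total length $O(\log n)$ with boundedly many generators and relators, which is a substantial theorem; the remaining steps are bookkeeping, the points requiring care being to keep a $3$-cycle in the generating set (so that Lemma~\ref{lem:Ak} applies, and, in the symmetric case, so that $S_n$ is distinguished from its quotient $C_2$) and the routine translation of length-$O(\log n)$ presentations into first-order sentences of length $O(\log n)$.
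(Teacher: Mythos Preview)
Your argument follows essentially the same route as the paper's: invoke the GKKL short presentations, observe that a $3$-cycle is among the generators so that Lemma~\ref{lem:Ak} gives a polynomial diameter bound, and feed everything into Lemma~\ref{lem:short_pres}. For $A_n$ your proof and the paper's coincide.

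For $S_n$ the paper takes a slightly more direct path: it uses the GKKL presentation of $S_n$ itself (which has a transposition among its generators, so the obvious analogue of Lemma~\ref{lem:Ak} with transpositions gives the diameter bound), and rules out the quotient $C_2$ simply by requiring the group to have at least three elements. Your construction instead extends the $A_n$ presentation by adjoining a transposition $t$ together with relations $t x_i t^{-1} = w_i$. The assertion that these $w_i$ can be taken of bounded length is not justified, and it is not clear for an arbitrary short presentation of $A_n$; if the $w_i$ are long, the resulting presentation of $S_n$ has length comparable to the Cayley-graph diameter rather than $O(\log n)$, and Lemma~\ref{lem:short_pres} then gives nothing useful. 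This gap is easily repaired by citing the GKKL presentation of $S_n$ directly (also of length $O(\log n)$), as the paper does, rather than building one by hand. Your device of requiring $c \neq 1$ for the $3$-cycle $c$ (an even permutation, hence trivial in $S_n/A_n$) to exclude the quotient $C_2$ is correct and is just a variant of the paper's ``at least three elements'' clause.
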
 

\begin{proof} We want to describe each $A_k$, and we may assume $k > 2$. By \cite[Cor 3.23]{Guralnick:08}, $A_k$ has a presentation of length $\ell = O(\log k) = O (\log \log |A_k|)$. By construction,  one of the generators of $A_k$ in the above presentation is a 3-cycle.  Therefore the diameter of the Cayley graph is at most $O(k^4)$ by Lemma~\ref{lem:Ak}.  So we can apply   Lemma~\ref{lem:short_pres} with $v = O(\log  k)$. 
(Actually a more careful look at the generation set gives that all 3-cycles can be expressed as words of length $O( \log k)$ and the diameter of the Cayley graphs is $O(k \log k)$.  )

The case of symmetric groups is similar. One uses a transposition instead of 3-cycle. We need to take into account that the symmetric groups are not simple. Since the only nontrivial quotient has size $2$, it suffices to require in the description that the group has at least $3$ elements. 
\end{proof}

\begin{prop}  Fix a prime power $q$. The class of  groups  $\text{PSL}_n(q)$ is $\log \circ \log$-compressible. \end{prop}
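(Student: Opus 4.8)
The plan is to follow the same template that handled the alternating and symmetric groups, namely Lemma~\ref{lem:short_pres}: produce for each $G = \text{PSL}_n(q)$ a short presentation and a Cayley graph of small diameter, then feed both into that lemma. Since $q$ is fixed, $|G|$ is roughly $q^{n^2}$, so $\log\log|G| = O(\log n)$; the target is therefore a description of length $O(\log n)$.

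First I would invoke a known short-presentation result for the finite simple groups of Lie type of bounded rank growth. By the work of Guralnick, Kantor, Kassabov and Lubotzky on presentations of finite simple groups (see \cite{Guralnick:08} and its sequels), the groups $\text{PSL}_n(q)$ with $q$ fixed admit presentations of length $O(\log n) = O(\log\log|G|)$, using a generating set of bounded size built from elementary matrices (transvections) together with permutation-type generators realising the Weyl group $S_n$. The key point to extract from that presentation is that one can take the generating set $S$ to consist of a single transvection $t$ (say $I + e_{12}$) together with one or two further elements generating a copy of (an image of) $S_n$ permuting the coordinate subspaces.

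Second I would bound the diameter of the Cayley graph of $\text{PSL}_n(q)$ with respect to this $S$ by a polynomial in $n$. This is the analogue of Lemma~\ref{lem:Ak}: the group $S_n$ acts transitively by conjugation on the set of transvections $I + e_{ij}$ (of which there are $O(n^2)$, after accounting for scalars, for fixed $q$), so every such transvection is a word of length $O(n^2)$ in $S$; and every element of $\text{SL}_n(q)$ is a product of $O(n^2)$ transvections by Gaussian elimination over $\mathbb F_q$ (each column-clearing step uses $O(n)$ transvections, and there are $O(n)$ columns, with the constant absorbing the fixed $q$). Hence the diameter is $O(n^4)$, so $v = O(\log n)$ suffices in Lemma~\ref{lem:short_pres}.

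Finally, applying Lemma~\ref{lem:short_pres} with $\ell = O(\log n)$ and $v = O(\log n)$ yields a sentence of length $O(\log n) = O(\log\log|\text{PSL}_n(q)|)$ describing $\langle G, \bar g\rangle$, and since $\text{PSL}_n(q)$ is simple for $n \ge 2$ (with the usual small exceptions $\text{PSL}_2(2), \text{PSL}_2(3)$ handled separately or excluded), the existential closure over the constants gives a description of $G$ itself; this is exactly $\log\circ\log$-compressibility. I expect the main obstacle to be locating and citing the right short-presentation result and confirming that its generating set can be arranged to contain a transvection whose conjugates under the built-in $S_n$ sweep out enough transvections — that is, that the presentation's generators and the Gaussian-elimination generators are compatible; the diameter estimate itself and the final assembly are routine.
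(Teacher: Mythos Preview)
Your proposal is correct and follows essentially the same approach as the paper: invoke the short presentations of \cite{Guralnick:08}, bound the Cayley-graph diameter polynomially by reaching all elementary matrices via the built-in permutation/Weyl part and then applying Gaussian elimination, and feed both bounds into Lemma~\ref{lem:short_pres}. The paper is slightly more explicit in that it records that the Guralnick--Kantor--Kassabov--Lubotzky generating set contains generators for a copy of $\text{PSL}_2(q)$ (handling the field scalars with diameter $O(\log q)$) and for $A_n$ (handling the coordinate permutations with diameter $O(n\log n)$), yielding a diameter bound of $O(n^3\log n\log q)$ rather than your $O(n^4)$; but since $q$ is fixed and only $v = O(\log n)$ is needed, this makes no difference to the outcome.
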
 
\begin{proof}
The argument is similar to the case of alternating groups. By \cite[Thm.\ A and Thm.\ 6.1]{Guralnick:08}, $\text{PSL}_n(q)$ has a presentation of length 
\bc $\ell = O(\log q + \log n) = O_q (\log \log |\text{PSL}_n(q)|)$. \ec The generating set for this presentation contains a generating set for $\text{PSL}_2(q)$ with  diameter in $O(\log q)$ and a generating set of $A_n$ with  diameter in   $O(n \log n)$. Thus, every elementary matrix in $\text{PSL}_n(q)$ can be expressed as a word of length at most $O(n\log n + \log q)$.
Finally a row reduction argument gives at any matrix in $\text{PSL}_n(q)$ is a product of at most $n^2$ elementary matrices, which implies that the diameter of the Cayley graph  is at most $O(n^3 \log n \log q)$ (this bound can be improved to $O(n^2 \log q + n^2 \log n)$ by more careful examination of all element in generating set). By Lemma~\ref{lem:short_pres} the groups $\text{PSL}_n(q)$ can be described by sentence of length $O_q(\log n) = O_q( \log \log |\text{PSL}_n(q)|	)$.
\end{proof}

\subsection{Rank 1 groups}

The result in \cite[Thm.\ 4.36]{Guralnick:08} gives a bound $O(\log q)$ for both length of presentation and diameter for groups such as $\text{SL}_2(q), \text{PSU}(3,q)$ and $\text{Sz}(q)$. Since the size of these groups is polynomial in $q$, this doesn't help to get descriptions shorter than   the ones in Theorem~\ref{t:simple}. 

If we fix the characteristic and allow descriptions in second order logic, something can be done. Recall that second order logic allows quantification over relations and functions of arbitrary arity.

\begin{prop} Fix a prime $p$. The field $\text{GF}(p^k)$ has a second-order description of length $ O (
\log k) $  \end{prop}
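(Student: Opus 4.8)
The plan is to describe $\mathrm{GF}(p^k)$ as \emph{the} field of characteristic $p$ whose dimension over its prime field $\mathbb{F}_p$ is exactly $k$. Since $p$ is fixed, ``$K$ is a field of characteristic $p$'' is a first-order sentence of length $O(1)$ (the characteristic clause being $\underbrace{1+\dots+1}_{p}=0$), and the prime field is the $O(1)$-definable subset $\mathbb{F}_p=\{0,1,1+1,\dots,(p-1)\cdot 1\}\subseteq K$. So the whole task reduces to writing, in length $O(\log k)$, a second-order formula $\Psi_k$ expressing $\dim_{\mathbb{F}_p}K=k$: once that is forced, $|K|=p^k$, and the field with $p^k$ elements is unique up to isomorphism, so the conjunction of the field axioms, the characteristic clause and $\Psi_k$ is a description of $\mathrm{GF}(p^k)$ of length $O(\log k)$.

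To express $\dim_{\mathbb{F}_p}K=k$ I would say ``there is an $\mathbb{F}_p$-basis of size exactly $k$''. Quantify (second order) over a unary predicate $I\subseteq K$, a binary relation $<$ strictly linearly ordering $I$, and the induced successor partial function $s$ on $I$ (these are $O(1)$-interdefinable), and let $a,b$ be the $<$-least and $<$-greatest elements of $I$. In a finite linear order, $|I|=k$ holds if and only if $s^{k-1}(a)=b$. The crux — and the only part whose length grows with $k$ — is that $s^{k-1}(a)=b$ can be written in length $O(\log k)$ by the repeated-squaring idea of Lemma~\ref{generation}: existentially quantify over partial functions $s_0,s_1,\dots,s_m$ with $m=\lceil\log_2(k-1)\rceil$, impose $s_0=s$ and $s_{j+1}=s_j\circ s_j$ (each clause of length $O(1)$) so that $s_j$ is the $2^j$-fold iterate of $s$, and then compose the $s_j$ along the fixed binary expansion of $k-1$ to obtain $s^{k-1}$. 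Under the length convention already used for $\delta_{v,k}$ in Lemma~\ref{generation} this gadget has $O(m)=O(\log k)$ quantifiers and atoms.

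It then remains to say, in length $O(1)$, that $I$ spans and is linearly independent over $\mathbb{F}_p$. The sum $\sum_{e\in I}c(e)\,e$ of an $\mathbb{F}_p$-valued function $c$ on $I$ has unboundedly many terms, and this is exactly where quantification over functions (alluded to just before the statement) earns its keep: introduce a running-total function $T\colon I\to K$ with $T(a)=c(a)\cdot a$ and $T(s(e))=T(e)+c(s(e))\cdot s(e)$ for every $e$ with $s(e)$ defined, and declare the value of the sum to be $T(b)$; this is one $\forall$-clause of length $O(1)$. Then ``$I$ spans'' is $\forall x\,\exists c\,\bigl(c\colon I\to\mathbb{F}_p\ \wedge\ \textstyle\sum_{e}c(e)e=x\bigr)$ and ``$I$ is independent'' is $\forall c\,\bigl((c\colon I\to\mathbb{F}_p\wedge\sum_{e}c(e)e=0)\to\forall e\,c(e)=0\bigr)$, each of length $O(1)$ once the running-total abbreviation is in place. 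Assembling all clauses, the full description has length $O(1)+O(\log k)+O(1)=O(\log k)$.

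The main obstacle to highlight is precisely the counting step. Any naive description of a field with $p^k$ elements (listing points, or building a length-$p^k$ linear order) costs $\Omega(p^k)$ or at best $\Omega(k)$, and even the Frobenius route — ``$x^{p^k}=x$ for all $x$, and no proper subfield'' — costs $\Omega(\log^2 k)$ when $k$ has many distinct prime factors, since one needs one clause $\exists x\,(x^{p^{k/\ell}}\neq x)$ of length $\Omega(\log k)$ for each prime $\ell\mid k$. The reduction to $O(\log k)$ rests on two points: (i) converting ``$|K|=p^k$'' into ``$\dim_{\mathbb{F}_p}K=k$'', so that we only have to count up to $k$ rather than $p^k$; and (ii) the repeated-squaring trick for ``$s^{k-1}(a)=b$''. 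A routine check at the end is that every quantified object used (the order, the successor, the iterates $s_j$, the coefficient functions $c$, the running totals $T$) has arity at most $2$, so the sentence lives in binary second-order logic and no use of arbitrary arity is needed here.
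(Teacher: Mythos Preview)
Your argument is correct and takes a genuinely different route from the paper's. The paper stays on the Frobenius side: it second-order quantifies over unary functions $f_1,\dots,f_m$ with $m=O(\log k)$, sets $f_1(x)=x^p$ and $f_{i+1}=f_i\circ f_i$ so that $f_i$ is the $2^{i-1}$-th iterate of the Frobenius, and then writes both ``$\forall x\ x^{p^k}=x$'' and a single lower-bound clause ``$\exists x\ x^{p^{k-1}}\neq x$'' by composing the $f_i$ along the binary expansions of $k$ and $k-1$. So the repeated-squaring device is the same as yours, but it is applied to the field's canonical endomorphism rather than to a successor on an auxiliary ordered index set; no basis, coefficient functions, or running totals appear. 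The paper's route buys brevity and a direct use of the field structure; yours buys robustness, since reducing everything to $\dim_{\mathbb{F}_p}K=k$ makes categoricity immediate. In fact your wariness about the Frobenius lower bound is well placed: unlike the ``one clause per prime divisor of $k$'' version you analyse, the paper's single clause ``$\exists x\ x^{p^{k-1}}\neq x$'' does not by itself exclude proper intermediate subfields (for instance $\mathrm{GF}(p^2)$ already satisfies both $\forall x\ x^{p^4}=x$ and $\exists x\ x^{p^3}\neq x$), so that sketch needs patching at the lower-bound step, whereas your dimension clause handles both bounds at once.
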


\begin{proof}  For $q= p^k$ the first-order   sentence  $\phi_q$ from \cite[Section 4]{Nies.Tent:17} describing $\text{GF}(q)$  says that the structure is a field of characteristic $p$ such that for all elements $x$ we have $x^{p^k}=x$ and there
is some $x$ with  $x^{p^{k-1}}\neq x$.  Now in the second order version introduce function symbols $f_1, \ldots , f_k$ such that $f_1(x) = x^p$ and $f_{i+1}(x) = f_i(f_i(x))$. Thus $f_i(x) = x^{p^{(2^i)}}$. Given these we can express that $x^{p^k} = y$ in length $O(\log k)$ using the binary expansion of $k$. 
\end{proof} 
By the biinterpretability method  described in \cite[Section 5]{Nies.Tent:17}, short descriptions of the fields imply short descriptions of the finite simple groups defined over them. For Suzuki and Ree groups, we have $p=2$ and  $p=3$, respectively.

\begin{thm} The classes  of Suzuki groups $\text{Sz}(2^{2l})$ and of small Ree groups ${}^2G_2(3^{2l+1})$ are  $\log \circ \log $ compressible in second order logic. \end{thm}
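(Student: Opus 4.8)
The plan is to combine the short second-order description of $\mathrm{GF}(p^k)$ obtained in the previous proposition with the bi-interpretability method of \cite[Section 5]{Nies.Tent:17}, exactly as indicated in the preceding remark. Here $p$ is the fixed characteristic ($p=2$ for Suzuki, $p=3$ for Ree) and the field has size $q=p^k$ with $k=2l+1$; we may assume $l\ge 1$, so that the groups in question are simple. The key point to establish is that each twisted group $\mathrm{Sz}(q)$, resp.\ ${}^2G_2(q)$, is bi-interpretable \emph{with a bounded number of parameters} with its defining field $\mathrm{GF}(q)$, and that within each of the two families all the data of this bi-interpretation can be taken fixed: the formulas interpreting the group in the field, the formulas interpreting the field in the group, and the formulas witnessing that the two composite interpretations are definably isomorphic to the identity all have length $O(1)$, and the parameter tuple has fixed length. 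This holds because a twisted group of fixed Lie rank over $\mathrm{GF}(q)$ is built from $\mathrm{GF}(q)$ by a recipe --- root groups, structure constants, the fixed-order field automorphism entering the twist --- that does not depend on $l$.

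First I would make this bi-interpretation explicit enough to read off that it has constant size: for $\mathrm{Sz}(q)$ one uses the standard rank-$1$ coordinatisation of a Sylow $p$-subgroup together with the action of a maximal torus, as in \cite{Nies.Tent:17}, and similarly for ${}^2G_2(q)$. Since $p$ and the Lie-theoretic data are fixed, each ingredient is a finite list of formulas of fixed length, and the parameters live in a fixed-length tuple.

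Next, given the second-order sentence $\phi_q$ of length $O(\log k)$ describing $\mathrm{GF}(p^k)$ from the previous proposition, I would run it through the transfer step of the bi-interpretability method: since the group is first-order (a fortiori second-order) interpretable in the field by the fixed formulas above, and conversely, substituting $\phi_q$ into the fixed transfer sentence yields a second-order sentence $\psi_q$ of length $O(\log k)+O(1)=O(\log k)$ that describes $\mathrm{Sz}(q)$, resp.\ ${}^2G_2(q)$, up to isomorphism; eliminating the parameters costs a further $O(1)$, since --- the groups being simple --- a valid parameter tuple can be pinned down by a fixed first-order formula over the group via its action on the interpreted copy of the field. Finally I would translate the length bound into the compressibility statement: for $G=\mathrm{Sz}(q)$ one has $|G|=q^2(q^2+1)(q-1)$ and for $G={}^2G_2(q)$ one has $|G|=q^3(q^3+1)(q-1)$, so $|G|=p^{O(k)}$ with $p$ fixed, whence $\log\log|G|=\Theta_p(\log k)$. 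Thus $|\psi_q|=O_p(\log k)=O_p(\log\log|G|)$, which is precisely $(\log\circ\log)$-compressibility of the two classes in second-order logic, in the sense of Definition~\ref{def:compress}.

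The only genuinely non-routine point, and hence the main obstacle, is the uniformity claim of the first paragraph: verifying that the bi-interpretation between the twisted group and its defining field has interpreting formulas and parameter list of length $O(1)$ independent of $l$, i.e.\ that none of the combinatorial data describing the Suzuki or Ree structure grows with $l$. Granting this, the remainder is the same bookkeeping as in \cite{Nies.Tent:17}, with first-order interpretability of the field weakened to a second-order description of it --- the single place where strengthening the logic is needed in order to bring the length down to logarithmic.
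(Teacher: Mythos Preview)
Your proposal is correct and follows exactly the approach the paper indicates. In fact the paper gives no separate proof at all: the theorem is stated immediately after the paragraph ``By the biinterpretability method described in \cite[Section 5]{Nies.Tent:17}, short descriptions of the fields imply short descriptions of the finite simple groups defined over them. For Suzuki and Ree groups, we have $p=2$ and $p=3$, respectively.'' Your write-up simply unpacks this remark, correctly identifying the one nontrivial point (uniformity in $l$ of the bi-interpretation data) and the bookkeeping that converts $O(\log k)$ into $O(\log\log|G|)$.
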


  \part{Metric spaces and descriptive set theory}

  \newcommand{\rank}{\text{rank}}

\section{Nies and Schlicht: Scott relation in Polish metric spaces}
For tuples $a, b$ in a Polish metric space, the Scott relation at level $1$ is defined as usual: for each challenge $y$ on the left  side there is response $z$ on the right  side so that the enumerated metric spaces $a, y$ and $b, z$ are isometric; similar for the sides interchanged. 
\begin{proposition} 
There is a computable Polish metric space $(X,d)$ and a computable sequence $\langle y_n^*\mid n\in\omega\rangle$ of distinct elements of $X$ such that the set 
$$C_1=\{(m,n)\in \omega\times\omega\mid y_m^* \equiv_1 y_n^* \}$$ 
is $\Pi^1_2$-complete. 
\end{proposition}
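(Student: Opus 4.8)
\textbf{Upper bound and strategy.} First I would verify $C_1\in\Pi^1_2$. For single points, $y_m^*\equiv_1 y_n^*$ says exactly that the distance spectra $D_i:=\{d(y_i^*,x):x\in X\}$ satisfy $D_m\subseteq D_n$ and $D_n\subseteq D_m$ (the ``challenge $y$, response $z$'' clause $\forall y\,\exists z\,d(y_m^*,y)=d(y_n^*,z)$ is precisely $D_m\subseteq D_n$). Since $X$ is computable, its elements are named by fast Cauchy sequences from a computable dense set, a quantifier over $X$ is a real quantifier, and ``$d(a,b)=r$'' is $\Pi^0_1$ in the names; hence $D_m\subseteq D_n$ is $\forall^1\exists^1\Pi^0_1$, i.e.\ $\Pi^1_2$, so $C_1$ is $\Pi^1_2$. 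For hardness the plan is to build $(X,d)$ so that $m\mapsto D_m$ faithfully codes a universal family of analytic subsets of $(1,2)$, turning $D_m=D_n$ into a $\Pi^1_2$-complete equality relation.

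\textbf{The space.} Fix a computable homeomorphism $\iota$ of $\omega^\omega$ onto the irrationals of $(1,2)$, a computable complete metric $\hat\rho\le 1$ on $\omega^\omega$ (say the standard ultrametric), and a computable point $c\in\omega^\omega$. Let $\langle W_m\rangle$ enumerate the $\Sigma^1_1$ subsets of $\omega^\omega$ via a universal $\Sigma^1_1$ set, presented as $W_m=\{f:\exists g\,(f,g)\in[S_m]\}$ for a uniformly computable sequence of trees $S_m$ on $\omega\times\omega$. From $S_m$ I would manufacture a uniformly computable \emph{total} continuous map $\pi_m\colon\omega^\omega\to\omega^\omega$ with $\mathrm{range}(\pi_m)=W_m\cup\{c\}$: on input $\tau$, follow $S_m$ greedily, emitting first coordinates, and output $c$ as soon as the run reaches a node with no successor in $S_m$ --- runs that never get stuck emit points of $W_m$, and stuck runs emit $c$. (Totality and computability use only that ``having a successor in $S_m$'' is decidable; no branch search is ever performed.) Now let $X$ consist of points $y_m^*$ ($m\in\omega$) and ``clouds'' $B_m=\{z_{m,f}:f\in\omega^\omega\}$, with
\[ d(y_m^*,y_n^*)=3,\quad d(y_m^*,z_{m,f})=\iota(\pi_m(f)),\quad d(y_n^*,z_{m,f})=3\ (n\ne m), \]
\[ d(z_{m,f},z_{m,f'})=\max\bigl(\hat\rho(f,f'),\,|\iota(\pi_m(f))-\iota(\pi_m(f'))|\bigr),\quad d(z_{m,f},z_{m',f'})=3\ (m\ne m'). \]
Routine checks using $1<\iota(\cdot)<2$ and $\hat\rho\le1$ give that $d$ is a metric, that each $B_m$ is homeomorphic to $\omega^\omega$ and complete, and that $X$ is complete and separable --- $y_m^*$ lies at distance $\ge1$ from every other point and distinct clouds are $3$ apart, so any Cauchy sequence is eventually constant equal to some $y_m^*$ or eventually trapped in a single $B_m$. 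The points $z_{m,f}$ with $f$ eventually $0$, together with the $y_m^*$, form a dense set whose pairwise distances are uniformly computable (totality of $\pi_m$ is crucial here), so $(X,d)$ is a computable Polish metric space with the $y_m^*$ distinct. Finally $D_m=\{0,3\}\cup\iota(W_m\cup\{c\})$, so since $\iota$ is injective into $(1,2)$, $y_m^*\equiv_1 y_n^*\Longleftrightarrow D_m=D_n\Longleftrightarrow W_m\cup\{c\}=W_n\cup\{c\}$.

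\textbf{Finishing via a $\Pi^1_2$-complete set.} Fix $m_0$ with $W_{m_0}=\omega^\omega$ (possible since $\omega^\omega$ is $\Sigma^1_1$). Then $y_m^*\equiv_1 y_{m_0}^*$ iff $W_m\supseteq\omega^\omega\setminus\{c\}$, so it suffices to show $P:=\{m:W_m\supseteq\omega^\omega\setminus\{c\}\}$ is $\Pi^1_2$-complete; it is clearly $\Pi^1_2$. For hardness, given $\Pi^1_2$ $B$ write $k\in B\Longleftrightarrow\forall g\,\theta(k,g)$ with $\theta\in\Sigma^1_1$, fix a homeomorphism $\sigma\colon\omega^\omega\setminus\{c\}\to\omega^\omega$ (the open set $\omega^\omega\setminus\{c\}$ is a countable disjoint union of clopen copies of $\omega^\omega$), and note that $V_k:=\{c\}\cup\{f\ne c:\theta(k,\sigma(f))\}$ is uniformly $\Sigma^1_1$, hence $V_k=W_{h(k)}$ for a computable $h$; then $W_{h(k)}\supseteq\omega^\omega\setminus\{c\}\Longleftrightarrow\forall f\ne c\,\theta(k,\sigma(f))\Longleftrightarrow\forall g\,\theta(k,g)\Longleftrightarrow k\in B$. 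So $k\mapsto(h(k),m_0)$ reduces $B$ to $C_1$, and with the upper bound $C_1$ is $\Pi^1_2$-complete.

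\textbf{Where the difficulty lies.} The real work is the construction of the space: realizing a \emph{universal} family of analytic sets as distance spectra of points of a \emph{computable} Polish space. The tension is that an analytic set may contain no computable real, whereas the spectrum of a computable point of a computable Polish space must carry a dense, uniformly computable sub-sequence. The device that resolves this is exactly the total computable $\pi_m$ with range $W_m\cup\{c\}$ --- padding the analytic set by one computable point so that a genuinely total computable surjection onto it exists --- compensated by shifting the reduction target from ``$W_m=\omega^\omega$'' to ``$W_m\cup\{c\}=\omega^\omega$'' and checking via $\sigma$ that this is still $\Pi^1_2$-complete. The metric axioms, completeness, and the verification of the computable presentation are then bookkeeping.
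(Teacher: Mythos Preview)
Your overall strategy---realize that $\equiv_1$ on single points is equality of distance spectra, code a universal family of $\Sigma^1_1$ sets as these spectra, and reduce from ``is this $\Sigma^1_1$ set all of $\omega^\omega$?''---matches the paper's. The gap is in your device for the coding. You claim a total computable $\pi_m\colon\omega^\omega\to\omega^\omega$ with $\mathrm{range}(\pi_m)=W_m\cup\{c\}$, but such a map need not exist. Take $W_m=\{0'\}$ (a $\Pi^0_2$, hence $\Sigma^1_1$, singleton). Then $\{0',c\}$ is a two-point discrete subspace of $\omega^\omega$, so $\pi_m^{-1}(\{0'\})$ would be nonempty clopen, hence contain some basic $[\sigma]$, giving $0'=\pi_m(\sigma\,0^\omega)$ computable---a contradiction. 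Your ``greedy'' description does not escape this: if you emit first coordinates as you go and then switch to $c$ when stuck, the output is a hybrid of a proper prefix with a tail of $c$, not $c$ itself; if instead you buffer output until you know whether the run is infinite, you are deciding a $\Pi^0_1$ question and the map is not continuous. Since your identification $D_m=\{0,3\}\cup\iota(W_m\cup\{c\})$ rests on this range claim, the reduction breaks.

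The paper sidesteps the problem by not parametrizing the cloud by $\omega^\omega$ at all. It builds an auxiliary ``cone'' $\{(0,0)\}\cup((0,1]\times\omega^\omega)$ with metric $|r_0-r_1|+\min(r_0,r_1)\,u(x_0,x_1)$, so that the distance from the apex $(0,0)$ to any $(r,x)$ is simply $r$. Each $\Sigma^1_1$ set $S_n$ is written as the first-coordinate projection of a uniformly computable closed $C_n\subseteq(0,1]\times\omega^\omega$, and the $n$-th piece $Y_n$ is taken to be $C_n$ together with the apex, as a closed subspace of the cone. The distance spectrum of the apex in $Y_n$ is then $\{0\}\cup S_n$ directly---no surjection from $\omega^\omega$ is needed, and the padding-by-$c$ issue never arises. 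Your construction would go through if you replaced the cloud $B_m\cong\omega^\omega$ by the closed set $[S_m]$ itself (with $d(y_m^*,z)$ given by the first-coordinate projection of $z$); that is essentially the paper's move.
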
 
\begin{proof} 
Note that $C_1$ is clearly a $\Pi^1_2$ set. To prove that it is $\Pi^1_2$-complete, we first fix some notation. If $A\subseteq X\times Y$ and $n\in X$, let $A_n$ denote the \emph{$n$-th slice} of $A$ and $p(A)=\{m\in X\mid \exists k (m,k)\in A\}$ its \emph{projection} to the first coordinate. For $X = \omega$, we say that $A$ is universal for a point class $\Gamma$ on $Y$ if every set in $\Gamma$ occurs as a slice. 

\begin{claim} 
There is a $\Pi^1_1$-universal set $B\subseteq \omega\times \omega^\omega$  such that $p(B)$ is $\Sigma^1_2$-complete. 
\end{claim} 
\begin{proof} 
Let $B'\subseteq \omega\times\omega\times \omega^\omega$ be universal for $\Pi^1_1$-subsets of $\omega\times\omega^\omega$. Let $\pi\colon \omega\times\omega\rightarrow\omega$ be a computable bijection and $\pi^*\colon \omega\times\omega\times\omega^\omega\rightarrow\omega\times\omega^\omega$ the induced bijection. It is easy to see that $B=\pi^*(B')$ is  universal for $\Pi^1_1$-subsets of $\omega^\omega$. Since the projection of $B'$ to $\omega\times\omega$ is universal for $\Sigma^1_2$-subsets of $\omega$, it is $\Sigma^1_2$-complete. Since $p(B' )$ has the same $1$-degree as $p(B)$ it follows that  the projection $p(B)$ is $\Sigma^1_2$-complete as well. 
\end{proof} 

We fix a $\Pi^1_1$-universal set $B$ as in the previous claim. Then $S=\omega\times\omega^\omega\setminus B$ is $\Sigma^1_1$-universal. It now follows that the equivalence relation on $\omega$ defined by $(m,n)\in E\Leftrightarrow S_m=S_n$ is $\Pi^1_2$-complete as a set, since $S_n\neq\omega^\omega\Leftrightarrow B_n\neq\emptyset \Leftrightarrow \exists x (n,x)\in B\Leftrightarrow n\in p(B)$ and $p(B)$ is $\Sigma^1_2$-complete. 

\begin{question} Show that $E$  is  $\Pi^1_2$-complete as an equivalence relation. \end{question}
\begin{claim} 
We can associate in a computable way to each $n\in\omega$ a Polish space $(Y_n,d_n)$ of diameter at most $1$ and some $y_n^*\in Y_n$ with distance set $\{d_n(y_n^*,y)\mid y\in Y_n\}=\{0\}\cup S_n$. 
\end{claim} 
\begin{proof} 
We first define an auxiliary Polish metric space $(X,d_X)$. 
Let $X=\{(0,0)\} \cup (0,1]\times\omega^\omega)$. Let   $m_{r_0,r_1}=\min\{r_0,r_1\}$ for $r_0,r_1\in\mathbb{R}$ and let  $u$ be the standard ultrametric on $\omega^\omega$. We define 
$$d_X((r_0,x_0),(r_1,x_1))=|r_0-r_1|+m_{r_0,r_1} u(x_0,x_1).$$ 
To see that $d_X$ is a metric on $X$, first note that  by the ultrametric inequality, $u(x_0,x_2)$ is at most the maximum of  $u(x_0,x_1)$ and  $u(x_1,x_2)$. So if $r_1\ge r_0$ or $r_1 \ge r_2$ then 
$$\Delta_0=(m_{r_0,r_1}u(x_0,x_1)+m_{r_1,r_2}u(x_1,x_2))-m_{r_0,r_2}u(x_0,x_2)\geq0$$ 
 We can hence assume that $r_1< r_0, r_2$ and additionally that $r_0 \leq r_2$ by symmetry between $(r_0,x_0)$ and $(r_2,x_2)$. Again  by the ultrametric inequality for $\omega^\omega$. In both cases, we have 
$$\Delta_0\geq (r_1-r_0)u(x_0,x_2)\geq r_1-r_0=-(r_0-r_1).$$
By our assumption $r_1<r_0\leq r_2$, we further have 
$$\Delta_1=(|r_0-r_1|+|r_1-r_2|)-|r_0-r_2|\geq r_0-r_1.$$ 
Hence $(X,d_X)$ satisfies the triangle inequality. 

We now define the required spaces $(Y_n,d_n)$ as subspaces of $(X,d)$. 
By identifying $\omega^\omega$ with the set of irrational numbers in $[0,1]$, we let $C_n\subseteq X $ be a closed set with $S_n=p(C_n)$. Note that we can obtain $C_n$ computably in $n$, assuming that our universal sets are constructed in the usual way. 
Now let $Y_n=\pi(C_n)$ and let $y_n^*$ be an element that is identified with $\pi(0,0)$. It is clear that $y_n^*$ has the required distance set in $(Y_n,d_n)$. 
\end{proof} 

Now let $Y=\bigcup_{n\in\omega} Y_n$ and $d_Y$ the metric on $Y$ given by the metrics $d_n$ and $d_Y(x,y)=2$ if $x\in Y_m$ and $y\in Y_n$ for some $m\neq n$. Since player II wins $G(y_m^*,y_n^*,1)\}$ if and only if $S_m=S_n$, it is now easy to see from the previous two claims that $C_1$ is $\Pi^1_2$-complete.  
\end{proof}

   
 \part{Model theory and definability}
%
%
%
 \section{Nies and Schneider: Concrete presentations, isomorphism, and descriptions}
 
 \subsection{Summary, mostly  in layman's terms.}  Mathematical structures are usually given by concrete presentations.  A computer scientist might think of a graph as a concrete object stored in a computer, for instance an adjacency list, which is a list of all the vertices and all the edges.  For another example, a set of generators together with  a set of relators on them present  a group.  
 
 What really counts is  the essence of the structure,  the structure  ``up to isomorphism": think of  the shape of the graph, or of  the abstract group.  Two concrete presentations that yield the same abstract structure  are called isomorphic. Being concrete,  the presentations can be used as input to some kind of computation. The question arises:   \begin{question}  How hard is it to tell whether two   presentations are isomorphic?   \end{question}
It is still unknown whether one can decide efficiently that two concretely presented finite  graphs are isomorphic (though Babai has recently shown that  the decision problem is in pseudopolynomial time). 
 
Related questions are  the following. Given a reference class of infinite structures,  \begin{question} which     structures are  determined  within the class by their first order theory? \end{question}  For the class of finitely generated groups, this  property  is  called \emph{quasi axiomatisable} (QA); see the last two chapters of the by now venerable survey~\cite{Nies:DescribeGroups}.   For instance, abelian groups are QA.  Even better,  \begin{question} which structures can be described within the class by  a  single  sentence in first-order logic? \end{question} 

In the same context, this  property  is  called \emph{quasi finitely axiomatisable}, or QFA \cite{Nies:DescribeGroups}.  Abelian groups are never QFA, but   other very common groups are, e.g.\ the Heisenberg group  $UT_3(\ZZ)$ or  the Baumslag-Solit\"ar group  $\ZZ[1/n] \rtimes \ZZ$.

  Even a description by  the full first-order  theory   would necessarily   only determine the essence of the structure (technically: isomorphic concrete structures are elementarily equivalent).  It is interesting to study these questions especially in the setting of topological algebra and Lie algebras, where not much has been done so far. The point is that first-order logic can only indirectly address the topology, because that is given by subsets.

 \subsection{Some more detail for mathematicians.}
 As mentioned, we have to distinguish between concrete presentations of a structure, and the abstract structure ``up to isomorphism". 
 Consider a finite presentation of a group: \bc $\la x_1, \ldots, x_n \mid r_1, \ldots r_k \ra$.  \ec This   describes the  concrete group $F(x_1, \ldots, x_n)/N$, where $N$ is the normal subgroup generated by $r_1, \ldots, r_k$. Given  two finite presentations, it is undecidable in general whether they describe isomorphic groups (Rabin).

A finite presentation of a group or Lie algebra, say,   is a description of a concrete   structure. We can also describe a structure merely  up to isomorphism. If we want to do this, we have to pick some language from mathematical logic and a corresponding satisfaction relation. First-order logic  has the additional advantage that  it doesn't look beyond the immediate structure as given by the elements and the  relations among them  (for instance,   subsets of the structure are not allowed).  This is a severe restriction, given that for instance in group theory, one frequently studies things like maximal subgroups etc. In first order logic, we can talk about particular ones if they are definable, e.g., the centre  or the centraliser of an element. But we can't quantify over the whole lot.
 
 If the structure is finite, we can look for a  short sentence, relative to the size of the structure,  describing it; e.g. Nies and Tent \cite{Nies.Tent:17} do this for (classes of)  finite groups. 
 
  If the structure is infinite,  we need the  external information given by the  reference class, because we can   not describe it by a single first-order sentence. The class needs to  at  least prescribe  the cardinality of the structure. For instance, we can describe the ordering of the rationals by a single sentence within the countable structures. A f.g.\ group is called   QFA   (for quasi-finitely axiomatisable) if,  within the class of f.g.\ groups,  it can be described  by a single sentence in the language of group theory, 

To reiterate, given a class $\+ C$ of concretely presented  structures of the same signature,
 there are two interrelated types of question \bi 
\item[(a)]  How complicated is the isomorphism relation between   structures in~$\+ C$?

\item[(b)] Which  structures can up to isomorphism  be described within $\+ C$ by their theory? 

\item[(c)] Or even by single  a first-order sentence?

\ei 

As for (a), the intuition may be  that if a concrete structure has a complicated equivalence class under the  isomorphism relation, it  is  hard to describe abstractly.   The trivial upper bound for isomorphism is $\Sigma^1_1$ (assuming the class $\+ C$ itself is arithmetical). On the other hand, elementary equivalence is easier, namely hyperarithmetical, and  in fact  $\PI 1(\ES^{\omega})$. 

Question (b) is interesting in particular if some classification of structures in $\+ C$ is known; for instance, we will below consider  the simple Lie algebras over $\mathbb C$. In this case, one would try to prove  that each structure  in the class has a description by a first-order sentence. This means that the classification can be expressed in first-order language for the  right signature, given the reference class (which may be  not  first-order axiomatisable). In this case one would hope  that all the sentences  have a common bound on the number of  quantifier. If so, this makes the isomorphism relation arithmetical: two structures are non-isomorphic iff there is a sentence at the given level of complexity that holds in one but not the other.

\subsection{Describing simple Lie algebras over $\mathbb C$ by a first-order sentence with an additional predicate}

We describe simple Lie algebras over $\mathbb C$ by a first-order sentence in the language with $+, [ , ]$ and the equivalence  relation $E$  that vectors  $x$ and $y$  span the same subspace.

There is a formula  expressing that $x_1, \ldots , x_n$ generate $L$ as a vector space:

\bc $\delta(x_1,\ldots, x_n)\equiv  \fa z  \ex y_1, \ldots , y_n   \, [ z = \sum_i y_i \land \bigwedge_i  y_i E x_i] $. \ec

Let $R_n$ denote the free associative algebra in $n$ generators $x_1, \ldots , x_n$  over $\mathbb C$; it is generated as a $\mathbb C$-vector space by all the words in the generators. The usual commutator in $R_n$ is denoted $[a,b] = ab-ba$.
With this commutator, $R_n$ becomes a Lie algebra.  The free Lie algebra in n variables over $\mathbb C$ is the Lie subalgebra of $R_n$ generated by $ x_1,,,,,,x_n$."

Each finite-dimensional Lie algebra is finitely presented, because the multiplication table on the basis elements gives a finite presentation.  So we have relators of the form $[x_i, x_j] = \sum_k c_k x_k$ where the $c_k$ are complex coefficients. (The situation is analogous to the case of finite groups.)

Using Cartan's classification, Serre proved that each semisimple Lie algebra $L$ over $\mathbb C$ is finitely presented  where coefficients   are integers in $[-3, 3]$. This can be   seen from the Cartan table; see Humphreys~\cite[Section 18.1]{Humphreys:12} for the presentation,  and also note that  the $x_i $ and $y_i$ there generate $L$.     Then there is $m$ depending on the dimension of $L$  such that only commutators of depth up to $m$  in the Lie generators $z_1, \ldots , z_k$ are needed to generate $L$ as a vector space. Thus the generators in $L$ satisfy the formula  $\alpha_m(z_1, \ldots, z_k)$ saying that each commutator of depth $m+1$ in those generators is a linear combination of the commutators of depth $m$. 

To describe a simple $L$ within the   Lie algebras over $\mathbb C$, we express that $L$ is non-trivial and that there are $z_1, \ldots z_k$ satisfying the Serre relations, the formula $\alpha_m$, and, using the formulas $\delta(x_1, \ldots, x_n)$,  that the commutators of depth $m$ in the $z_1, \ldots, z_k$  generate $L$ as a vector space. Since $L$ is simple, this sentence describes $L$. 

To obtain a description in the language of  Lie algebras we would need to define $E$ in terms of the Lie operations. First one would  show  that $E$ is invariant under automorphisms in the finite dimensional case. 


\section{Nies, Schlicht and Tent:Bi-interpretations for $\omega$-categorical structures and theories}
\newcommand{\Th}{\mathrm{Th}}
\newcommand{\bSi}{\mathbf{\Sigma}}
We discuss bi-interpretations of pairs of $\omega$-categorical theories.  We begin with structures rather than theories, because definability is easier to grasp. Definability will always mean without parameters.

\subsection{Interpretations of structures}  Suppose that $L,K $ are first-order  languages in countable signatures. Interpretations via first-order formulas of $L$-structures in $K$-structures are formally defined, for instance,   in Hodges \cite[Section 5.3]{Hodges:93}.   Informally, an $L$-structure $A$ is interpretable in  a $K$-structure $B$ if the elements of $A$ can be represented by tuples in a definable $k$-ary relation $D$ on $B$, in such a way that equality of $A$  becomes a $B$-definable equivalence relation $E$ on $D$,  and the other atomic  relations on $A$ are also definable.  
 
 We think of the interpretation of $A$ in $B$
as a \emph{decoding function}  $\Delta$. It decodes $A$ from $B$ using  first-order formulas, so that $A = \Delta(B)$ is  an $L$-structure.  Each $L$-formula $\phi$ corresponds to a $K$-formula $\psi'$ which is the saturation under $E$ of a $K$-formula $\psi$. We write $\phi = \Delta(\psi')$. 

For a structure $A$, recall that $A^{eq}$ has a sort $V= D/E$ for each definable equivalence relation $E$ on $A^n$ and definable $E$-closed $D \sub A^n$, and besides the inherited ones has definable relations $V_i$, $i< n$ between $A $ and $V$, given by 

\[ S^V_i a v  \lra   \ex \ol y  [  a = y_i \land [\ol y]_E = v ].\]
For instance if $n=1$, we have the relation $S^V _0$  that $v $ is the equivalence class of $a$.

Clearly $\Aut (A)$ acts on $A^{eq}$. The $r$-orbits on a sort $V = D/E$ have the form $[U]_{E^r}$ where $U$ is an $n\cdot r$-orbit of $A$. So if $\Aut (A)$ is  oligomorphic, there are only finitely many such $r$-orbits. 

\begin{example} {\rm Let $E$ be an equivalence relation with all classes of size $2$. Take unary predicates  $C_0, D_0, C_1, D_1$ partitioning the domain, and let $A$ be the structure where each  $E$-class   has exactly one element in $C_i$ and one in $D_i$, for some $i\leq 1$. 
Then $A$ has 4 orbits, the sort $A/E$ only has two orbits.  For the first, $U$ above can be either $C_0$ or $D_0$. } \end{example} 

Throughout we will have $\omega$-categorical structures $A,B$ with $\Aut(A) = G,\Aut(B) = H$. 
There are various equivalent views of expressing interpretation of $A$ in $B$. 

\bi  \item[(a)] $A = \Delta(B)$ for some interpretation $\Delta$, as above

\item[(b)]  A map $\alpha \colon A \to B^{eq}$ with range contained in single sort, \footnote{An alternative definition (CITE EVANS) allows the range to be a subset of finitely many sorts.} sending  relations $\ES$-definable in $A$ to  relations $\ES$-definable in $B^{eq}$. This map extends canonically to a map $\wt \alpha \colon A^{eq} \to B^{eq}$.

\item[(c)] There exists a topological homomorphism  $h \colon \Aut (B) \to \Aut (A)$ such that the range of $h$  is   oligomorphic.
\ei
 (a), (b) are merely reformulations of each other. For (a) to (c), let $h(\beta) = \beta^n \uhr D /E$ which has oligomorphic range by remarks above.
 
 For (c) to (a) see Hodges \cite[Section 7.4]{Hodges:93}. 

\subsection{Bi-interpretations of structures} \label{ss:BI theories}
 There are  several equivalent formulations. Fix structures $A$, $B$.

 (a)  $A \cong \Delta (B) , B \cong \Gamma(A)$, and some  isomorphisms $\gamma: A \cong  \Delta(\Gamma(A))$ and $\delta: B \cong  \Gamma(\Delta(B))$  are definable in $A$, in $B$, respectively.  (Thus,  $\gamma$ is described  by a formula with $1+n$ free variables, where $n$ is the product of the dimensions of the two interpretations.)  If the structures are $\omega$-categorical, we can let  $\delta$ be  the restriction of $\wt \gamma$ to the sort on which $B$ is defined.
%
 
(b)  $\alpha \colon  A \to B^{eq}, \beta \colon B \to A^{eq}$, and the maps $\gamma \colon A \to A^{eq}$ given by $\gamma = \wt \beta \circ \alpha $, and  $\delta= \wt \alpha  \circ \beta $ analogous for $B$, are definable in the respective structure. 
 
 Note that $\gamma  \colon A \to
  V$ for some sort $V$. 
  

 A bi-interpretation introduces a matching of orbits.  Suppose $\alpha $ is $d$-dimensional, and $\beta $ is $e$-dimensional.
 \begin{fact} \label{fact:match orbits} For each $n$-orbit $S$ of $A$,  $\wt \alpha (S)$ is an $n$-orbit of $B^{eq}$ (under the action of $\Aut(B)$, on the sort which contains  the range of $\alpha$.  \end{fact}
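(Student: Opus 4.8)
The plan is to transport the $G$-orbit $S$ forward along the equivariance that automatically accompanies an interpretation, and then to close the loop using the extra rigidity of a \emph{bi}-interpretation, which forces the connecting homomorphism $h\colon H\to G$ to be surjective.

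\emph{Equivariance.} I would first recall, from formulation (c) of interpretation applied to $A=\Delta(B)$, that there is a topological homomorphism $h\colon H\to G$ with $\alpha(h(\beta)\cdot a)=\beta\cdot\alpha(a)$ for all $\beta\in H$, $a\in A$, where $\beta$ acts on the sort $V$ of $B^{eq}$ containing $\range(\alpha)$. In particular $\range(\alpha)$ is $H$-invariant. Since $\wt\alpha$ is the canonical extension of $\alpha$ to $A^{eq}$, it intertwines the two actions there as well; so $\wt\alpha(h(\beta)\cdot\bar a)=\beta\cdot\wt\alpha(\bar a)$ for $\bar a\in A^n$ (diagonal actions). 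Because $h(\beta)\in G$ and $S$ is a $G$-orbit, $h(\beta)\cdot\bar a\in S$, hence $\wt\alpha(S)\subseteq V^n$ is nonempty and $H$-invariant, i.e.\ a union of $n$-orbits of $B^{eq}$ on the sort $V$. It then remains only to show it is a single orbit.

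\emph{Surjectivity of $h$.} Let $h'\colon G\to H$ be the homomorphism arising the same way from the interpretation $B=\Gamma(A)$, so that $\wt\beta(h'(\sigma)\cdot x)=\sigma\cdot\wt\beta(x)$ for $\sigma\in G$, $x\in B^{eq}$. Fixing $\sigma\in G$ and using $\gamma=\wt\beta\circ\alpha$ together with the definability of $\gamma$ (hence its $G$-equivariance), I would compute, for every $a\in A$,
\begin{align*}
\wt\beta(\alpha(\sigma\cdot a)) &= \gamma(\sigma\cdot a) = \sigma\cdot\gamma(a) \\
&= \sigma\cdot\wt\beta(\alpha(a)) = \wt\beta(h'(\sigma)\cdot\alpha(a)).
\end{align*}
Both arguments of $\wt\beta$ here lie in $\range(\alpha)$ (which is $H$-invariant by the above), and $\wt\beta$ is injective there (on $\range(\alpha)$ it agrees with $\gamma\circ\alpha^{-1}$, and $\gamma,\alpha$ are injective); therefore $\alpha(\sigma\cdot a)=h'(\sigma)\cdot\alpha(a)$ for all $a$. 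Comparing with the defining property of $h$ and using injectivity of $\alpha$, this says exactly $h(h'(\sigma))=\sigma$. Thus $h\circ h'=\id_G$, so $h$ is onto (by the symmetric argument $h'\circ h=\id_H$ as well, but only surjectivity of $h$ is needed here).

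\emph{Conclusion.} Given $\bar a,\bar a'\in S$, pick $\sigma\in G$ with $\bar a'=\sigma\cdot\bar a$ and, by surjectivity, $\beta\in H$ with $h(\beta)=\sigma$; then $\wt\alpha(\bar a')=\wt\alpha(h(\beta)\cdot\bar a)=\beta\cdot\wt\alpha(\bar a)$, so $\wt\alpha(\bar a)$ and $\wt\alpha(\bar a')$ are $H$-conjugate. Combined with the $H$-invariance from the first step, $\wt\alpha(S)$ is exactly an $n$-orbit of $B^{eq}$ on the sort $V$, as claimed. I expect the only real obstacle to be organizational bookkeeping: keeping straight which automorphism group acts on which iterated $(-)^{eq}$, and pinning down the injectivity of $\wt\beta$ on $\range(\alpha)$; beyond that everything is formal manipulation of the equivariance identities built into interpretations.
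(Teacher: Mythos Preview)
Your proof is correct, and at bottom it rests on the same fact the paper uses: the ``dual'' homomorphism $h=\alpha^*\colon H\to G$, characterised by $\alpha(h(\beta)\cdot a)=\beta\cdot\alpha(a)$, is surjective, from which the orbit statement drops out immediately. The difference lies in how that surjectivity is obtained. The paper simply invokes the Ahlbrandt--Ziegler/Coquand result (Evans' notes, Thm.\ 2.9) that under a bi-interpretation $\alpha^*$ is a topological isomorphism, and reads off surjectivity in one line. You instead prove surjectivity by hand from the definability of $\gamma$: you build the companion map $h'\colon G\to H$ from the other interpretation and show $h\circ h'=\id_G$ via the equivariance identity for $\gamma=\wt\beta\circ\alpha$. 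Your route is more self-contained and in effect reproduces part of the proof of the cited theorem, at the cost of the bookkeeping you anticipated (injectivity of $\alpha$ and of $\wt\beta\uhr{\range(\alpha)}$); the paper's route is a one-line appeal to a known black box.
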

 
 \begin{proof} As usual let $G = \Aut A, H = \Aut B$. For simplicity first let $n=1$. Recall from Ahlbrandt/Ziegler \cite{Ahlbrandt.Ziegler:86}  (detail in   \href{http://wwwf.imperial.ac.uk/~dmevans/Bonn2013_DE.pdf}{ David Evans' 2013  notes}, Thm. 2.9) that the ``dual'' $\alpha^* \colon H \to G$ is a topological isomorphism, where  
 \[(\alpha^* h)  w = \alpha^{-1} (h ( \alpha(w))).\]
 So as $h$ ranges over $H$, $\alpha^*(h)$  ranges  over $G$. Let $S = G\cdot w$, then   $\alpha(S) = H \cdot \alpha(w)$ as required.
 More generally, for each $n$ we have $\alpha( G\cdot  (w_1, \ldots, w_n)) = H\cdot  (\alpha(w_1), \ldots , \alpha(w_n))$. \end{proof}
 
 \subsection{Bi-interpretations of theories}
  We can also formulate biinterpretability for complete theories $S, T$, easiest  in countable languages. Note that  theories can be seen as infinite bit sequences and hence the set of theories carries the usual Cantor space topology.  The complete theories form a closed set.  To be $\omega$-categorical is an arithmetical property of theories, because by Ryll-Nardzewski this property  is equivalent to saying that for each $n$, the Boolean algebra of formulas with at most $n$ free variables modulo $T$-equivalence is finite.

   To fix some notation, the sorts in models of $S^{eq}, T^{eq}$ have the forms $C/E, D/F$, resp,  where $C$ is an $r$-ary definable relation, $D$ is $s$-ary, and $E$, $F$ are definable equivalence relations. 
$  \gamma, \delta$ each are $(1+n)$-ary, where $n =rs$.
 Given $\Gamma, \Delta$ as above, we express that for an arbitrary  model $A$ of $S$ and $B : = \Delta(A)$, we have  $B \models T$,  and $\gamma $ evaluated in $A$ induces   an isomorphism of   $A$ and $\Gamma(\Delta(A))$ (a structure with domain  a sort of $A^{eq}$);  similarly,   $\delta$ is  an isomorphism of $B$ and $\Delta (\Gamma(B))$. This can be expressed by two  possibly   infinite lists of sentences that have to be in $S$, and in $T$, respectively.   
  
 \begin{remark} \label{rem: delta}  {\rm  Since $B$'s domain is a sort of $A$ and $B$ is $\omega$-categorical,   requiring  that $\delta$ exists is actually redundant: $\delta $ can be chosen to be ``$\Delta(\gamma)$". This means that we apply   the interpretation $\Delta$ to the definable isomorphism  $\gamma: A \cong \Gamma(\Delta(A)$, obtaining an isomorphism $\delta \colon \Delta(A) \cong \Delta(\Gamma(\Delta(A)))$, i.e. $\delta \colon B \cong \Delta(\Gamma (B))$.  Clearly $\delta$ is invariant under the $\Aut(B)$ action on $B^{eq}$. Hence $\delta$ is
 $B$-definable. }   \end{remark}
  
   \begin{fact} Suppose $S$, $T$ are bi-interpretable theories in  the notation above. For each model  $A$   of $S$,  letting   $B = \Delta(A) $,  a model  of $T$,  we have that $A, B$ are bi-interpretable as models. \end{fact}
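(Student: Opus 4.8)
The plan is to unwind the definitions. The (possibly infinite) lists of sentences in $S$ and in $T$ that witness bi-interpretability of the \emph{theories} were set up precisely so that, when read off inside a single model, they deliver the data required by the first formulation (a) of a bi-interpretation of \emph{structures} in Subsection~\ref{ss:BI theories}. So the argument is essentially bookkeeping: no new construction is needed, only a careful matching of the two formulations, plus the fact that a sentence of $S$ holds in every model of $S$.

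First I would record what the $S$-side list gives us in $A$. Since $A \models S$, each such sentence holds in $A$; in particular $B := \Delta(A)$ is a model of $T$, so $\Delta$ is literally an interpretation of $B$ in $A$, and the fixed $(1+n)$-ary formula $\gamma$ (with $n = rs$) evaluated in $A$ defines an isomorphism $A \cong \Gamma(\Delta(A)) = \Gamma(B)$, where $\Gamma(B)$ is a structure with domain a sort of $B^{eq}$ and $B$-definable relations; thus $\Gamma$, read off in $B$, is an interpretation of $A$ in $B$. Next, because $B \models T$, each sentence of the $T$-side list holds in $B$: the formula $\delta$ evaluated in $B$ defines an isomorphism $B \cong \Delta(\Gamma(B))$, and $\delta$ is a formula in the language of (a sort of) $B^{eq}$, hence $B$-definable. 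Alternatively, as in Remark~\ref{rem: delta}, one may bypass the $T$-side list and simply take $\delta = \Delta(\gamma)$, applying the interpretation $\Delta$ to the $A$-definable isomorphism $\gamma$; this yields a $\Delta(A) = B$-definable isomorphism $B \cong \Delta(\Gamma(B))$, using that $B$ is $\omega$-categorical because $B \models T$ and $T$ is $\omega$-categorical.

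Finally I would assemble these pieces: the interpretation $\Gamma$ (interpreting $A$ in $B$, via $\gamma$) and the interpretation $\Delta$ (interpreting $B$ in $A$), together with the $A$-definable isomorphism $\gamma \colon A \cong \Gamma(\Delta(A))$ and the $B$-definable isomorphism $\delta \colon B \cong \Delta(\Gamma(B))$, are exactly the data of definition (a) for a bi-interpretation of the structures $A$ and $B$; hence $A$ and $B$ are bi-interpretable as models. I do not expect a real obstacle here; the only point needing care is the direction-matching of the composite interpretations — checking that the structure onto which the $S$-side sentences assert $\gamma$ to be an isomorphism is indeed $\Gamma(\Delta(A))$ as computed inside $A$ (and symmetrically for $\delta$ inside $B$), rather than some a priori different copy — and the observation that ``$\phi \in S$'' transfers to ``$A \models \phi$'', which is what lets the definable isomorphism $\gamma$ come along for free. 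If one wishes to avoid the $T$-side list entirely, the cleanest route is via Remark~\ref{rem: delta}, reducing the whole verification to the $S$-side list plus $\omega$-categoricity of $B$.
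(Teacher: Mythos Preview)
Your proposal is correct and, like the paper, amounts to unwinding definitions; the paper's proof is just terser and uses formulation (b) rather than (a): since $B=\Delta(A)$ literally lives in $A^{eq}$, it takes $\beta\colon B\to A^{eq}$ to be the identity inclusion and $\alpha\colon A\to B^{eq}$ to be the given $\gamma$, so that $\wt\beta\circ\alpha=\gamma$ and $\wt\alpha\circ\beta=\delta$ are definable by hypothesis. Your route via formulation (a) reaches the same conclusion with the same ingredients, and your optional appeal to Remark~\ref{rem: delta} matches the paper's earlier observation that $\delta$ is redundant.
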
 
 
  \begin{proof}
  $\beta \colon B \to A^{eq}$ is the identity map on this concrete structure $B = \Delta(A)$. $\alpha \colon A \to B^{eq}$ we can therefore choose the same as $\gamma$, and $\wt \beta \circ \alpha = \gamma$ is  definable in $A$.   $\wt \alpha \circ \beta$ is the same as $\delta$, hence definable in $B$.   
  \end{proof}
 
 \begin{remark} \label{rem:match types} {\rm  In the case of theories rather than structures, the matching of orbits in Fact~\ref{fact:match orbits}   becomes a matching of types. Each $k$-type $\phi$ of $S$, i.e.\ an atomic formula with free variables $x_1, \ldots, x_k$,  is given by a $k$ type $\psi'$ of $T^{eq}$ in the sense that $\phi$ is $\Gamma(\psi')$, and therefore by a $k\cdot r$ type $\psi$ of $T$ whose saturation under the definable equivalence relation $F$ on $D \sub B^r$ is equivalent to $\psi'$ (note there could be various such types $\psi$). Similarly for types of $T$. }
\end{remark}

\subsection{Isomorphism of groups and bi-interpretability}

By isomorphisms of topological groups, we always mean topological isomorphisms.  Two  $\omega$-categorical structures are bi-interpretable iff their automorphism groups are isomorphic; \href{http://wwwf.imperial.ac.uk/~dmevans/Bonn2013_DE.pdf}{ David Evans' 2013  notes}, Thm. 2.9. This was originally proved by Coquand.

 \begin{theorem}  \label{thm:coq} Isomorphism of oligomorphic groups is  Borel bi-reducible with  bi-interpretability of $\omega$-categorical theories. \end{theorem}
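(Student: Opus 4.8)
The plan is to establish a Borel bi-reduction in both directions between the isomorphism relation on oligomorphic closed subgroups of $\S$ (coded as elements of a suitable standard Borel space) and the bi-interpretability relation on $\omega$-categorical complete theories (coded as points of Cantor space). The conceptual engine is the Coquand/Ahlbrandt--Ziegler theorem cited just above: two $\omega$-categorical structures are bi-interpretable iff their automorphism groups are topologically isomorphic, together with the fact, made explicit in the preceding subsections, that bi-interpretability of theories $S,T$ is witnessed by finite syntactic data (the interpretations $\Gamma,\Delta$ together with the definable isomorphisms $\gamma,\delta$, which by Remark~\ref{rem: delta} reduce to just $\Gamma,\Delta,\gamma$) whose correctness is expressed by a list of sentences lying in $S$ and in $T$ respectively.

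First I would fix the coding conventions. For groups: an oligomorphic $G\le\S$ is Borel-coded by the orbit structure $V_G$, or equivalently by the canonical structure $M_G$ of Fact~\ref{fact:conj}; the map $G\mapsto V_G$ is Borel by the argument in the proof that conjugacy of oligomorphic groups is smooth (one extracts a countable dense subgroup in a Borel way). For theories: an $\omega$-categorical complete theory is a point of $\cantor$, and the set of such theories is Borel (indeed arithmetical, by Ryll-Nardzewski, as noted in the excerpt). The passage $M\mapsto\Th(M)$ is Borel, and conversely from an $\omega$-categorical theory $T$ one Borel-constructs its countable model, hence its automorphism group (as a closed subgroup of $\S$), by a Henkin-style construction. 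These two translations are the skeleton of both reductions.

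\textbf{From bi-interpretability to isomorphism.} Given $\omega$-categorical theories $S,T$, pass to their countable models $A\models S$, $B\models T$ and the groups $G=\Aut(A)$, $H=\Aut(B)$; all of this is Borel. By the Fact in subsection~\ref{ss:BI theories} on theories, if $S,T$ are bi-interpretable then $A,B$ are bi-interpretable as structures, hence by Coquand's theorem $G\cong H$ topologically. Conversely if $G\cong H$ then $A,B$ are bi-interpretable as structures, and one checks that the witnessing interpretations, being finite syntactic objects, can be read off as interpretations of the theories with the required sentences in $S$ and $T$ — here one uses $\omega$-categoricity to upgrade structure-level definable isomorphisms to formulas, and Remark~\ref{rem: delta} to avoid having to separately certify $\delta$. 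So $S\,E_{\mathrm{bi}}\,T \iff G\cong H$, giving a Borel reduction.

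\textbf{From isomorphism to bi-interpretability.} Given oligomorphic $G,H$ (coded by $V_G,V_H$), form the theories $S=\Th(M_G)$ and $T=\Th(M_H)$ of their canonical structures; this is Borel. Since $M_G,M_H$ are $\omega$-categorical, $G\cong H$ topologically iff $M_G,M_H$ are bi-interpretable as structures iff (by the theory-level Fact, now in the reverse reading) $S,T$ are bi-interpretable theories. Hence $G\cong H \iff S\,E_{\mathrm{bi}}\,T$, a Borel reduction the other way.

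\textbf{Main obstacle.} The routine parts are the Borel-definability bookkeeping; the genuinely delicate point is the interface between the structure-level statement of Coquand's theorem (bi-interpretability of structures $\Leftrightarrow$ topological isomorphism of automorphism groups) and the \emph{theory-level} notion of bi-interpretability as a relation on $\cantor$. One must verify that ``$A$ and $B$ are bi-interpretable as structures'' is, for $\omega$-categorical $A,B$, equivalent to ``$\Th(A)$ and $\Th(B)$ are bi-interpretable as theories'' — i.e. that the finitely-many formulas witnessing a structure bi-interpretation in fact witness a theory bi-interpretation, so that the correctness conditions become membership of certain sentences in the theories and are therefore Borel (indeed arithmetical, in fact $\PI 1(\ES^\omega)$ as flagged in the companion section) to check in the coded theories. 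This uses $\omega$-categoricity twice: to know every relevant relation and isomorphism is definable, and (via Ryll-Nardzewski) to know the list of required sentences is manageable. Once that equivalence is in hand, both reductions are immediate from the two Borel translations $G\leftrightarrow M_G\leftrightarrow\Th(M_G)$ and $T\leftrightarrow(\text{countable model})\leftrightarrow\Aut(\text{model})$.
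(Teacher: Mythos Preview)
Your proposal is correct and follows essentially the same approach as the paper: the reduction from groups to theories goes via $G\mapsto M_G\mapsto\Th(M_G)$ (using a Borel choice of countable dense subgroup to compute $M_G$), the reduction from theories to groups goes via the Henkin construction $T\mapsto M\mapsto\Aut(M)$, and both equivalences are justified by the Coquand/Ahlbrandt--Ziegler theorem. The paper's proof is terser in that it simply cites Evans' notes for the equivalence at the theory level, whereas you explicitly flag the passage between structure-level and theory-level bi-interpretability as the point needing care; but the architecture is the same.
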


\begin{proof} $\le_B$: From oligomorphic $G$ we can in a  Borel way determine a countable dense subgroup $\hat G$. The canonical structures for $G$ and $\hat G$ are equal. The canonical structure $M_{G}$ for $ G$ can thus be Borel determined from $G$. From $M_G$ we can Borel determine the theory $\Th(M_G)$. Then  $G \cong H$  iff $\Th(M_G)$ and $\Th(M_H)$ are bi-interpretable by  \href{http://wwwf.imperial.ac.uk/~dmevans/Bonn2013_DE.pdf}{ David Evans' 2013  notes}, Thm. 2.9.

\n $\ge_B$: From a  consistent theory $T$ in a countable signature, via the Henkin construction  we can  in a  Borel way determine a   model $M \models T$ with domain $\omega$. Let $F(T)$ be the automorphism group of such a model,  which is  a closed subgroup of $\S$. Then for $\omega$-categorical theories $S, T$, we have that $S$ is bi-interpretable with $T$ iff $F(S)$ is isomorphic to $F(T)$ by   \href{http://wwwf.imperial.ac.uk/~dmevans/Bonn2013_DE.pdf}{ David Evans' 2013  notes}, Thm. 2.9.
\end{proof}





\subsection{Bi-interpretability of $\omega$-categorical theories \\ is given by a $\SI 2$ condition}

\begin{thm} \label{thm:BISigma2} 
There  is a $\SI 2$ relation which coincides with bi-interpretability on the  $\PI 3$ set of $\omega$-categorical theories. In particular,   bi-interpretability of $\omega$-categorical theories is $\PI 3$ and hence Borel.  \end{thm}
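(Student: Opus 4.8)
The plan is to express bi-interpretability of $S$ and $T$ in the shape ``there is a \emph{number} $d$ coding a finite witness such that $V(d,S,T)$'', where $V$ is $\PI 1$ in the pair $(S,T)$; such a relation is $\SI 2$. The witness is a quadruple $(\Gamma,\Delta,\gamma,\delta)$ of the kind occurring in the definition of bi-interpretability above: interpretations in both directions together with the two candidate definable isomorphisms. As recalled there, such a quadruple witnesses bi-interpretability of $S$ and $T$ exactly when two computable lists of sentences are included, one in $S$ and one in $T$: the first says that $\Delta$ is a legitimate interpretation, that $\Delta(A)\models T$ for every $A\models S$ (equivalently, every $\Delta$-translate of a member of $T$ lies in $S$), and that $\gamma$ defines an isomorphism $A\cong\Gamma(\Delta(A))$; the second is the mirror statement for $\Gamma$ and $\delta$ (and by Remark~\ref{rem: delta} one may even take $\delta=\Delta(\gamma)$, dropping it from the witness). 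Since membership of a fixed sentence in $S$ is clopen in $S$, for a fixed witness $d$ the statement ``both lists lie in the respective theory'' is $\PI 1$ in $(S,T)$, uniformly in $d$. When the signatures are finite, each component of the quadruple is already a finite tuple of formulas, so $d$ is genuinely a number, and the resulting $\SI 2$ relation equals bi-interpretability for all theories over those signatures.

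The real work lies with countable signatures: then an interpretation carries a formula for each of infinitely many symbols, so quantifying over it is a type-two quantifier and we get only the trivial bound $\Sigma^1_1$. Here I would invoke Ryll--Nardzewski to make the witness finite. Working with the canonical relational presentations (a relation symbol for every $n$-type, for every $n$), which are interdefinable with $S$ and $T$, the atomic content of an $r$-dimensional interpretation $\Delta$ of $T$ in $S$ is, modulo equivalence over $S$, captured by: the dimension, a domain formula, an equality formula, and for each $n$ a \emph{finite} function $h_n$ sending the $(nr)$-type realised by a tuple of representatives in $A\models S$ to the $n$-type it represents in $\Delta(A)$ --- finite because $S$ has only finitely many $(nr)$-types and $T$ only finitely many $n$-types. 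It then remains to collapse the sequence $(h_n)_n$ to a single finite object, and this is the step I expect to be the main obstacle. I would do it by exploiting the bi-interpretation constraints: $\gamma$ is a \emph{single} formula of arity at most $1+rs$, and it is required to define an isomorphism $A\cong\Gamma(\Delta(A))$; combined with $\omega$-categoricity --- which forces $\Gamma(\Delta(A))\cong A$ for free, both being countable models of the complete theory $S$ --- this should pin down the values of $h_n$ at all arities in terms of the bounded-arity data already listed (together with finitely many bounds, provided by Ryll--Nardzewski, on the number of types at the relevant arities). In other words only finitely many of the interpreting formulas are genuine parameters; the rest are reconstructible, and their reconstruction is checkable by a $\PI 1$ condition on $(S,T)$. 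Everything outside this reconstruction step is routine syntactic bookkeeping.

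Granting the finiteness of the witness, let $R(S,T)$ be the $\SI 2$ relation ``there is a number $d$ coding such a finite witness with both associated lists of sentences included in $S$, resp.\ $T$''. By the above, $R(S,T)$ holds iff $S,T$ are bi-interpretable whenever both are $\omega$-categorical. Finally, as noted in the text, being $\omega$-categorical is a $\PI 3$ property of theories (by Ryll--Nardzewski: for each $n$ there is $m$ bounding the size of the Boolean algebra of $n$-formulas modulo the theory). Hence bi-interpretability of $\omega$-categorical theories is the intersection of $\{(S,T):S\text{ is }\omega\text{-categorical}\}$, $\{(S,T):T\text{ is }\omega\text{-categorical}\}$, and $R$ --- an intersection of two $\PI 3$ sets with a $\SI 2$ set --- and is therefore $\PI 3$, in particular Borel.
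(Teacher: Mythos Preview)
Your overall architecture matches the paper's: the $\SI 2$ relation has an outer existential number quantifier over the finite core of a bi-interpretation---the two sorts $C/E$, $D/F$ together with the formula $\gamma$ (with $\delta = \Delta(\gamma)$ coming for free by Remark~\ref{rem: delta})---followed by a $\PI 1$ verification in $(S,T)$; your final paragraph deriving the $\PI 3$ bound is also correct. You have also correctly isolated the real obstacle: over an infinite signature the two interpretations carry infinitely many interpreting formulas (your type-matching maps $h_n$), and these cannot be absorbed into the number quantifier.

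The gap is exactly at the step you flag. You assert that $\gamma$ together with bounded-arity data ``should pin down the values of $h_n$ at all arities'', but you give no mechanism, and it is not clear the matchings are uniquely determined: from $\gamma$ one reads off only the \emph{composite} correspondence ($k$-types of $S$ to $krs$-types of $S$), and likewise from $\delta$ on the $T$ side, whereas factoring these through the intermediate $T$-types (resp.\ $S$-types) need not be canonical. So as written the argument stops short at its central step.

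The paper avoids reconstruction altogether. Since $S$ and $T$ are $\omega$-categorical, at each level $k$ there are only finitely many candidate matchings, and their number is computable from $S \oplus T$. One builds the tree whose level-$k$ nodes record partial matchings (in both directions, up to arity $k$) consistent with $\gamma$ defining an isomorphism $A \cong \Gamma(\Delta(A))$; the consistency check for a given $k$-type $\phi$ is the single $S$-sentence displayed at the end of the paper's proof. This tree is computable in $S \oplus T$ with branching bounded computably in $S \oplus T$, so by K\"onig's Lemma ``the tree has an infinite path'' is $\PI 1$ in $(S,T)$. The missing idea, then, is to replace ``determine the $h_n$'' by compactness: existence of a coherent sequence of $h_n$'s is a $\PI 1$ statement about a recursively bounded tree, and no reconstruction is needed.
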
 
  \begin{proof}
\n (a)   The   initial block of existential quantifiers in the $\SI 2$ condition  states that there are   (numbers that are codes for)  sorts $V= C/E,W= D/F$ as in Subsection~\ref{ss:BI theories}    in the languages of $S, T$,   and a potential isomorphism described by a formula $ \gamma$ (which determines an isomorphism $\delta$ as in Remark~\ref{rem: delta}). 

\medskip 
\n (b) To complete the interpretations of theories, it is sufficient to provide for each $k$-type $\tau$ of $S$ (describing a $k$-orbit in any model of $S$) a $k\cdot r$ type $\rho$ of $T$, in which case  the orbit for $\tau$ goes to the orbit induced on the sort $D^k/F^k$ by $\rho$.  Similarly for $S$ and $T$ interchanged.

We now Turing compute  from the join of $S $ and $ T$ as oracles  a tree whose  maximal branching at each node is also  bounded computably in $S,T$. Any path on the tree will provide a bi-interpretation based on    the given finite information in (a). It is   $\PI  1$ in $S,T$ that there is such a branch by K\"onig's Lemma, so the whole statement is $\SI 2$ as required.

 The $k$-th level  of the tree provides matchings of  the $k$-types of $S$ with $k\cdot r$-types of $T$, and $s$-types of $T$ with $k\cdot s$-types of $S$, according to Remark~\ref{rem:match types}. From $S$, $T$ we can compute how many $k$-types there are, and how many possible matchings exists, so the branching bound is computable in the join of $S $ and $ T$.  We require that the matchings are consistent with that  the map defined by $\gamma$  describes   an isomorphism.  
 
 To see  how to do this, we  take a $k$-type of $S$. For simplicity of notation, assume that $k=2$. So we are given an atomic formula $\phi(x_1, x_2)$ for $S$. A path of length $2n$ in the tree  provides sufficient information about  $\hat A = \Gamma(\Delta (A))$. We want to show that in any bi-interpretation extending this path, $\gamma$ maps the 2-orbit defined by $\phi$ in $A$ to the 2-orbit defined by $\phi$ in $\hat A$. 
 
 Starting from $\hat A$, we have $\phi = \Gamma (\psi)$ (in the sense introduced in Subsection~\ref{ss:BI theories} above) where $\psi$ is evaluated on the sort of $B = \Delta(A)$ which contains the domain of $\hat A$. We can see  $\psi$  as a  formula with two blocks of $s$ free variables each, which is saturated under $E$.  Similarly, $\psi = \Delta(\chi )$ where  $\chi$ is saturated  under $F$  with $2s$ blocks of $r$ free variables each. Let $\bar y$ denote tuples of $n = sr $ variables, and view the  variables of $\chi$ as  two blocks of $n $ variables. The condition for $\phi$ the theory $S$ has  to satisfy  when admitting  this path of length $2n$ onto  the tree is

 \bc $\fa x_1 \fa x_2 \fa \bar y_1 \fa \bar y_2 \,  [ \gamma(x_1, \bar y_1 ) \land \gamma(x_2, \bar y_2) \to ( \phi (x_1, x_2 ) \lra \chi(\bar y_1, \bar y_2)]$. \ec
  \end{proof}

By a straightforward modification of \cite[Section 2]{Lempp.Slaman:07}, the set of $C$ countable $\omega$-categorical theories is $\Pi_3^0$-complete with respect to continuous reductions.  However, if we suitably change the topology we can make this set closed while retaining the same Borel sets.  In this way the $\omega$-categorical theories can be considered as points in   a Polish space.

\begin{cor} \label{cor:BISigma2}  Bi-interpretability on the set of $\omega$-categorical theories is Borel isomorphic to a $\bSi^0_2$-equivalence relation on a Polish space. \end{cor}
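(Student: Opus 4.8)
The plan is to combine Theorem~\ref{thm:BISigma2} with the change-of-topology device already mentioned just above the corollary. Work inside Cantor space $\cs$, identifying theories with bit sequences; let $C\sub\cs$ be the $\PI 3$ set of $\omega$-categorical theories (in particular a Borel set), and let $R\sub\cs\times\cs$ be the $\SI 2$ relation supplied by Theorem~\ref{thm:BISigma2}, so that $R\cap(C\times C)$ is exactly the bi-interpretability relation on $C$, which I will denote $E$. Note that $E$ is an equivalence relation: reflexivity comes from the trivial interpretation, symmetry from exchanging the two interpretations, and transitivity from composing them.

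First I would produce the Polish space. By the standard fact that a Polish topology can be refined, keeping the same Borel $\sigma$-algebra, so as to turn any prescribed Borel set into a clopen set (see, e.g., \cite[Ch.\ 3]{Gao:09}), there is a finer Polish topology $\tau'$ on $\cs$ with the same Borel sets as the usual topology $\tau$ and in which $C$ is clopen, hence closed. Then $Y:=(C,\tau'\uhr C)$ is a closed subspace of a Polish space, so $Y$ is itself Polish; this is the space on which bi-interpretability will be realised as a $\bSi^0_2$-equivalence relation.

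Next I would check that the $\bSi^0_2$ witness survives the refinement. The key point, whose direction is easy to get backwards, is that passing to a finer topology never increases Borel complexity: every $\tau$-open set is $\tau'$-open, hence every $\tau$-closed set is $\tau'$-closed, and by the obvious induction every $\tau$-$\bSi^0_2$ set is $\tau'$-$\bSi^0_2$. The same applies to the product topologies, since $\tau'\times\tau'$ refines $\tau\times\tau$. Therefore $R$, which is $\bSi^0_2$ with respect to $\tau\times\tau$, is also $\bSi^0_2$ with respect to $\tau'\times\tau'$, and hence $E=R\cap(C\times C)$ is $\bSi^0_2$ in the subspace topology of $Y\times Y$, which coincides with $(\tau'\uhr C)\times(\tau'\uhr C)$.

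Finally I would identify this with bi-interpretability in the original sense. Since $\tau$ and $\tau'$ generate the same Borel sets on $\cs$ and $C$ is Borel, the Borel subsets of $C$ are the same whether computed from $\tau$ or from $\tau'$; hence the identity map $C\to Y$ is a Borel isomorphism, and it plainly conjugates the bi-interpretability relation on the $\omega$-categorical theories to $E$. This yields the corollary. The only genuine point of care in the whole argument is the monotonicity of Borel rank under refinement used above, so that the $\bSi^0_2$-witness $R$ from Theorem~\ref{thm:BISigma2} can be reused unchanged; everything else is a direct appeal to Theorem~\ref{thm:BISigma2} and to the standard Polish-topology refinement theorem.
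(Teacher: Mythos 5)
Your proposal is correct and follows essentially the same route as the paper: refine the Polish topology so that the set of $\omega$-categorical theories becomes closed while keeping the Borel sets, and observe that the $\SI 2$ witness from Theorem~\ref{thm:BISigma2} remains $\bSi^0_2$ in the finer topology. Your explicit check that Borel rank does not increase under refinement (closed sets stay closed, so $F_\sigma$ stays $F_\sigma$) is exactly the point the paper leaves implicit, and is argued correctly.
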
 
\begin{proof} By a well known fact from descriptive set theory e.g.\ \cite[Corollary 4.2.4]{Gao:09},  there is a  finer Polish topology with the same Borel sets in which the set of $\omega$-categorical theories is closed.  Then the $\Sigma^0_2$ condition above yields a $\bSi^0_2$ description of bi-interpretability on this closed set. 
\end{proof}

  Recall that a Borel equivalence relation on a Polish space is called \emph{countable} if every equivalence class is countable.

 \begin{thm}  Isomorphism of oligomorphic groups is Borel reducible to  a countable Borel equivalence relation.
 \end{thm}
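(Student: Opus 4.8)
The plan is to combine the $\le_B$ half of Theorem~\ref{thm:coq} with the Borelness of bi-interpretability (Theorem~\ref{thm:BISigma2}, or Corollary~\ref{cor:BISigma2}) and then to show that the equivalence relation we land in has only countable classes, so that it is a countable Borel equivalence relation by definition.

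First I would recall from Theorem~\ref{thm:coq} the Borel map $G\mapsto \Th(M_G)$ sending an oligomorphic group to the theory of its canonical structure; it reduces isomorphism of oligomorphic groups to bi-interpretability of $\omega$-categorical theories, and its range lies in the Borel set $\+ C$ of theories of canonical structures (those $M$ whose named relations are exactly the orbit relations of $\Aut(M)$, numbered in a conjugation-invariant way, so that $\Aut(M_G)=G$). By Theorem~\ref{thm:BISigma2}, bi-interpretability is a Borel equivalence relation on the standard Borel space of $\omega$-categorical theories, hence also on $\+ C$. Thus it suffices to show: \emph{each bi-interpretability class meeting $\+ C$ is countable.}

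Fix oligomorphic $G$. Since bi-interpretability of $\omega$-categorical structures coincides with topological isomorphism of their automorphism groups (Coquand; cf.\ the discussion before Theorem~\ref{thm:coq}) and $\Aut(M_H)=H$ for a canonical structure $M_H$, the class of $\Th(M_G)$ in $\+ C$ is $\{\Th(M_H):H\ \text{a closed oligomorphic subgroup of}\ \S,\ H\cong G\}$; with the conjugation-invariant numbering, $\Th(M_H)$ depends only on the $\S$-conjugacy class of $H$. So I would reduce to showing that there are only countably many $\S$-conjugacy classes of closed oligomorphic subgroups of $\S$ topologically isomorphic to $G$. Each such $H$, via a topological isomorphism $\theta\colon G\to H$, yields the faithful continuous oligomorphic action $g\cdot x=\theta(g)(x)$ of $G$ on $\omega$; conversely any such action has image a closed oligomorphic copy of $G$, and isomorphic $G$-actions on $\omega$ give $\S$-conjugate images. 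Now a faithful continuous action of $G$ on $\omega$ with finitely many orbits decomposes as $\bigsqcup_{i<m} G/U_i$ with the $U_i\le G$ open (point stabilizers of continuous actions on discrete sets are open), and $G$ has only countably many open subgroups: each open $U$ contains some $G_F$ with $F\subseteq\omega$ finite, and the subgroups between $G_F$ and $G$ are the unions of the finitely many double cosets in $G_F\backslash G/G_F$ (finitely many because $G_F$, like $G$, is oligomorphic). Hence there are only countably many possibilities for $(m;U_0,\dots,U_{m-1})$ up to $G$-set isomorphism, so countably many $\S$-conjugacy classes, so the class is countable.

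Finally, a Borel equivalence relation all of whose classes are countable is a countable Borel equivalence relation, so bi-interpretability restricted to $\+ C$ is one, and $G\mapsto\Th(M_G)$ reduces isomorphism of oligomorphic groups to it, proving the theorem. I expect the counting step to be the crux: one must use oligomorphicity twice — to force a faithful copy of $G$ inside $\S$ to come from an action with only finitely many orbit-summands, and (via finiteness of $G_F\backslash G/G_F$) to bound the open subgroups of $G$ — and one must verify the routine points that $\+ C$ and the restricted relation are Borel and that the orbit numbering can be made conjugation-invariant, so that passing to canonical structures collapses the (in general uncountable) collection of $\omega$-categorical structures with automorphism group isomorphic to $G$ down to the countably many $\S$-conjugacy classes.
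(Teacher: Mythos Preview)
Your approach differs substantially from the paper's and, modulo one real gap, is more direct. The paper does not exhibit a countable Borel equivalence relation explicitly. Instead it observes (via Theorem~\ref{thm:coq} and Corollary~\ref{cor:BISigma2}) that isomorphism of oligomorphic groups is \emph{potentially} $\bSi^0_2$, quotes an external result (Nies--Schlicht--Tent, in preparation) that this relation is Borel equivalent to an orbit equivalence relation of a Borel $\S$-action, and then applies a theorem of Hjorth--Kechris: any orbit equivalence relation of a Borel $\S$-action which is potentially $\bSi^0_2$ is Borel reducible to a countable Borel equivalence relation. Your counting argument --- $G$ has only countably many open subgroups (each contains some $G_F$, and there are only finitely many subgroups between $G_F$ and $G$ since each such subgroup is a union of double cosets and $|G_F\backslash G/G_F|<\infty$ by oligomorphicity), hence only countably many $\S$-conjugacy classes of oligomorphic $H\cong G$ --- is correct and bypasses this machinery.

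The gap is the claim that ``the orbit numbering can be made conjugation-invariant'', so that $\Th(M_H)$ depends only on the conjugacy class of $H$. This is not routine. With any concrete Borel numbering (say by lexicographically least tuple), a conjugating permutation $\alpha$ need not carry the $i$-th $n$-orbit of $H$ to the $i$-th $n$-orbit of $\alpha H\alpha^{-1}$, so conjugate groups can yield non-isomorphic $M_H$ and hence distinct theories; nothing in your argument bounds the number of theories arising from a single conjugacy class. Relatedly, the range $\+ C$ of a non-injective Borel map is a priori only analytic, not Borel. A clean fix uses the smoothness of conjugacy established earlier in the paper: take a Borel transversal $T$ for conjugacy of oligomorphic groups and reduce via $G\mapsto t(G)\in T$. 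Then $\cong$ restricted to the standard Borel space $T$ is Borel (by Theorems~\ref{thm:coq} and~\ref{thm:BISigma2}) and has countable classes by your counting, so it is the desired countable Borel equivalence relation --- without passing through theories at all.
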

 
 \begin{proof}   A Borel equivalence relation $E$ on a Polish space $X$ is called \emph{potentially $\bSi^0_2$} if there is a finer Polish topology on $X$ with the same Borel sets in which $E$ is $\bSi^0_2$. By  Hjorth and Kechris \cite[Proposition 3.7]{Hjorth.Kechris:95}, this condition is equivalent to being Borel reducible a $\bSi^0_2$  equivalence relation on a Polish space.  
 
  By Theorem \ref{thm:coq} and Corollary~\ref{cor:BISigma2}, isomorphism of oligomorphic groups is  potentially $\bSi^0_2$. Nies, Schlicht and Tent (Oligomorphic groups are essentially countable, in preparation) showed this relation  is Borel equivalent to the isomorphism relation on a Borel invariant set of models, and hence the orbit equivalence relation of a Borel action $S_\infty$. It now suffices to apply another  result of  Hjorth and Kechris \cite[Theorem 3.8]{Hjorth.Kechris:95}: if the  orbit equivalence relation given by a  Borel action of $S_\infty$   is potentially $\bSi^0_2$, then it  is   Borel reducible to  a countable Borel equivalence relation.

%
  \end{proof}

\def\cprime{$'$} \def\cprime{$'$}

%

\end{document}